\numberwithin{equation}{section}
\newtheorem{theorem}{Theorem}
\numberwithin{theorem}{section}
\newtheorem{lemma}[theorem]{Lemma}
\newtheorem{corollary}[theorem]{Corollary}
\theoremstyle{remark}
\newtheorem*{remark}{Remark}
\newtheorem*{remarks}{Remarks}
\newcommand{\Z}{\mathbb{Z}}
\newcommand{\N}{\mathbb{N}}
\newcommand{\Q}{\mathbb{Q}}
\newcommand{\R}{\mathbb{R}}
\newcommand{\ch}{\operatorname{ch}}
\newcommand{\re}{\operatorname{Re}}
\newcommand{\im}{\operatorname{Im}}
\newcommand{\sgn}{\operatorname{sgn}}
\newcommand{\Arg}{\operatorname{Arg}}
\DeclareMathOperator{\erf}{erf}
\DeclareMathOperator{\imm}{Im}
\DeclareMathOperator{\ree}{Re}
\renewenvironment{proof}[1][Proof]{\begin{trivlist} \item[\hskip \labelsep {\bfseries #1:}]}{\qed\end{trivlist}}
\title{Asymptotic behavior of partial and false theta functions arising from Jacobi forms and regularized characters}
\author{Kathrin Bringmann}
\address{Mathematical Institute\\University of
Cologne\\ Weyertal 86-90 \\ 50931 Cologne \\Germany}
\email{kbringma@math.uni-koeln.de}
\author{Amanda Folsom}
\address{Department of Mathematics and Statistics \\  Amherst College \\  Amherst, MA 01002 \\ U.S.A.}
\email{afolsom@amherst.edu}
\author{Antun Milas}
\address{Department of Mathematics and Statistics \\
SUNY-Albany  \\
Albany, NY 12222 \\ U.S.A.}
\email{amilas@albany.edu}
\thanks{ The research of the first author was supported by the Alfried Krupp Prize for Young University Teachers of the Krupp foundation and the research leading to these results has received funding from the European Research Council under the European Union's Seventh Framework Programme (FP/2007-2013) / ERC Grant agreement n. 335220 - AQSER.  The second author is grateful for the support of  NSF CAREER grant DMS-1449679, and for the hospitality provided by the Max Planck Insitutute for Mathematics, Bonn, and the Institute for Advanced Study, Princeton, under NSF grant DMS-1128155.  The third author was partially supported by the Simons Foundation Collaboration Grant for Mathematicians ($\#$ 317908).}
\begin{document}

\maketitle
\begin{abstract}  
We prove several asymptotic results for partial and false theta functions arising from Jacobi forms, as the modular variable $\tau$ tends to $0$ along the imaginary axis, and the elliptic variable $z$ is unrestricted in the complex plane. 
We observe that these functions exhibit Stokes' phenomenon -  the  asymptotic behavior of these  functions sharply differs depending on where the elliptic variable $z$ is located within the complex plane.
We apply our results to study  the asymptotic expansions of regularized characters  and quantum dimensions of the $(1,p)$-singlet vertex operator algebra coming from conformal field theory. This, in particular, recovers and extends several results from \cite{CMW} pertaining to regularized quantum dimensions, which served as a main source of motivation.  
\end{abstract}
\section{Introduction}  
The main objects of interest in this paper are the (Jacobi) \emph{partial theta functions} 
\begin{align}\label{def_Fdell}
F_{d,\ell}\left(z;\tau\right):=\sum_{n \geq 0} \zeta^{\ell n+d}q^{(\ell n+d)^2},
\end{align} 
defined for $d\in\Q$ and $\ell\in\N$.  {Here and throughout, we let} $\zeta:=e^{2 \pi i z}$ and $q:=e^{2\pi i \tau},$ with $z\in \mathbb C$ and $\tau \in \mathbb H$, the complex upper half-plane.  These functions are aptly {named: we find weight $1/2$ modular Jacobi theta functions (up to suitable changes of variables) if the sum in (\ref{def_Fdell}) is extended to be over the full lattice of integers.} Despite their non-modularity, functions like $F_{d,\ell}$ and their specializations have a rich history.  In particular, they are known to  play fundamental roles within the theory of $q$-hypergeometric series and integer partitions in number theory, dating back to the time of Rogers and Ramanujan, and continuing into the present day \cite{Alladi, Andrews, BerndtYee, Fine}.  More recently, we have begun to understand partial theta functions within the theory of modular forms, as they are connected to mock modular forms and also quantum modular forms \cite{ FOR,M, ZagierQ}. Specializations of the partial theta functions $F_{d,\ell}$ are also intimately related to Eichler integrals of modular forms \cite{BringmannRolen, ZagierVas, ZagierQ}.  Outside of number theory, partial theta functions appear in connection to topological invariants of $3$-manifolds \cite{Hikami, LawrenceZagier, ZagierVas}, as generating functions for colored Jones polynomials for alternating knots \cite{GarLe}, and in the representation theory of vertex algebras, the last of which we elaborate upon below.  

In all of the above aspects, it is important to understand the asymptotic properties of partial theta functions.  For example, in his second notebook \cite[p. 324]{BerndtRam},    Ramanujan claimed an asymptotic expansion  for the partial theta function  
\begin{equation*}
2\sum_{n \geq 0} (-1)^n q^{n^2+n} =  1+T+T^2+2T^3+5T^4+\cdots
\end{equation*} with $q=:\frac{1-T}{1+T}$, as $T\to 0^+$,
  combinatorial properties   and generalizations of which  have been studied in \cite{BerndtKim, Galway, Stanley}.  
	
	Similar expansions have been important within the theory of quantum modular forms, for example, {due to work of Zagier \cite{ZagierVas},} the Eichler integrals of weight $k$ cusp forms $g(\tau) = \sum_{n\geq1} a_g(n) q^n$,  close relatives to partial theta functions,  are known to satisfy $(N\in\mathbb{N}_0)$ (see also  \cite{BringmannRolen}) 
		\begin{align}\label{eqn_eichlerasy} \sum_{n \geq 1} a_g(n)n^{1-k} q^n   = \sum_{n=0}^N \frac{(-x)^n}{n!} L_g\left(e^{\frac{2\pi i \ell}{m}};k-1-n\right) +O\left(x^{N+1}\right)\end{align}
with $\tau = \frac{\ell}{m} + \frac{ix}{2\pi}$, $\frac{\ell}{m} \in \mathbb Q$, as $x\to 0^+$,  where the asymptotic coefficients in (\ref{eqn_eichlerasy}) are given in terms of twisted $L$-values. The asymptotic properties of the partial theta functions 
$$\phi_m^{(a)}(\tau) := m \sum_{n \geq 0} \chi_{2m}^{(a)}(n) q^{\frac{n^2}{4m}},$$   where $\chi_{2m}^{(a)}$ are certain characters, were similarly studied in \cite{CMW, Hikami, LawrenceZagier, ZagierVas},  in connection to topological invariants of $3$-manifolds, representation theory, and quantum modular forms.

The typical situation involves partial theta functions obtained by specializing the variable $z$  in (\ref{def_Fdell}) to be a  point in $\mathbb Q \tau + \mathbb Q$.  This produces a one-variable function in $\tau$, and  the asymptotics of these resulting functions are studied as  $\tau\to 0$.    Our first set of results, which are both of independent interest and of interest in representation theory as we discuss below, gives asymptotic expansions for the two-variable Jacobi partial theta functions $F_{d,\ell}(z;it)$ for any $z\in \mathbb C$ as $t \to 0^+$. These functions exhibit  Stokes' phenomenon, in that their asymptotic properties sharply differ depending on where in the complex plane $z$ lies.   To describe this, we write $z=(z_0+j)/\ell,$ where $j\in\mathbb Z$, $z_0=x_0+iy_0,$ with $x_0, y_0 \in \mathbb R$ and $-1/2<x_0\leq 1/2$.   Our results are also phrased using the differential operator $ \mathcal D_z :=\frac{1}{2\pi i} \frac{\partial}{\partial z}$, and the Bernoulli polynomials $B_n(x)$. We note that some special cases of asymptotic 
expansions of $F_{d, \ell}$ have already been investigated in the literature. For instance, the authors in \cite{BK} obtained its asymptotic expansion formula in the $|z|<\frac{1}{4\ell}$ region for $d\in \mathbb Q^+$.

\begin{theorem}\label{Fasex}  
We have the following behavior  as $t \to 0^+$ for any $N\in \mathbb N_0$.
\begin{itemize}[leftmargin=*,align=left]
\item[\rm{(i)}] 
If  $\imm (z)>0  \text{ or }\left(\imm (z)<0 \text{ and } |x_0|>|y_0|\right),    \text{ or } z\in \mathbb{R}\setminus\frac{1}{\ell}\mathbb{Z},$  then  
\begin{align*}
F_{d,\ell}\left(z;it\right)&=\sum_{a=0}^N \mathcal{D}_z^{2a}\left(\frac{\zeta^d}{1-\zeta^\ell}\right)\frac{(-2\pi t)^a}{a!} + O\left(t^{N+1}\right).\end{align*}
\item[\rm{(ii)}] If    $\imm(z)<0$ and $|x_0|\le|y_0|$, then
  \begin{align*}
F_{d,\ell}\left(z;it\right) &= \left(2\ell^2t\right)^{-\frac12}e^{\frac{2\pi ijd}{\ell}-\frac{\pi z_0^2}{2\ell^2t}}\sum_{\substack{n\in\mathbb{Z}\\|n+x_0|\le|y_0|}}e^{-\frac{\pi n^2}{2\ell^2t}-\frac{\pi z_0n}{\ell^2t}-\frac{2\pi ind}{\ell}}  
\\ & \hspace{.5in} +  \sum_{a=0}^N \mathcal{D}_z^{2a}\left(\frac{\zeta^d}{1-\zeta^\ell}\right)\frac{(-2\pi t)^a}{a!} + O\left(t^{N+1}\right).
\end{align*}

\item[\rm{(iii)}]
If $z\in \frac{1}{\ell}\mathbb Z$, then 
\[
F_{d,\ell}\left(z; it\right)=\frac{1}{2}\left(2\ell^2 t\right)^{-\frac{1}{2}}\zeta^d-\zeta^d\sum_{a=0}^{N} \frac{B_{2a+1}\left(\frac{d}{\ell}\right)}{2a+1}\frac{\left(-2\pi\ell^2 t\right)^a}{a!} + O\left(t^{N+1}\right).
\]

\end{itemize}

\end{theorem}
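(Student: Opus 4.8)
The natural starting point is the Mellin transform / Euler–Maclaurin machinery for sums of the shape $\sum_{n\ge 0} g(n) q^{Q(n)}$ with $Q$ a quadratic form. I would write $F_{d,\ell}(z;it) = \sum_{n\ge 0} e^{2\pi i z(\ell n + d)} e^{-2\pi t(\ell n+d)^2}$ and split the analysis according to which term in the completed theta function dominates. The cleanest route is via the classical theta transformation: for the \emph{full} lattice sum one has, by Poisson summation, $\sum_{n\in\mathbb Z} e^{2\pi i z(\ell n+d)} e^{-2\pi t(\ell n+d)^2} = (2\ell^2 t)^{-1/2} e^{2\pi i d z}\sum_{m\in\mathbb Z} e^{-\frac{\pi(m+z\cdot(\text{shift}))^2}{2\ell^2 t}} e^{\cdots}$, so $F_{d,\ell}$ differs from half of this completed object by the ``tail'' $\sum_{n<0}$, and one must estimate which pieces survive as $t\to 0^+$. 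The three cases (i), (ii), (iii) correspond exactly to the three regimes in which (i) the geometric-type part $\zeta^d/(1-\zeta^\ell)$ dominates and the Gaussian tail is exponentially small, (ii) finitely many Gaussian terms are of the same order or larger and must be retained, (iii) the boundary case $z\in\frac1\ell\mathbb Z$ where $1-\zeta^\ell=0$ and the expansion is governed by Bernoulli polynomials.

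For part (i) I would proceed directly by Euler–Maclaurin or, more slickly, by writing $F_{d,\ell}(z;it) = \frac{1}{2\pi i}\int_{(c)} \Gamma(s)(2\pi t)^{-s}\,\zeta_{\text{Hurwitz-type}}(s)\,ds$ where the Dirichlet series $\sum_{n\ge 0}\zeta^{\ell n+d}(\ell n+d)^{-2s}$ is entire in $s$ when $\zeta^\ell\ne 1$ (a Lerch-type zeta function); shifting the contour to the left past the poles of $\Gamma(s)$ at $s=0,-1,-2,\dots$ picks up residues $\frac{(-2\pi t)^a}{a!}\sum_{n\ge0}\zeta^{\ell n+d}(\ell n+d)^{2a}$, and one identifies $\sum_{n\ge 0}\zeta^{\ell n+d}(\ell n+d)^{2a} = \mathcal D_z^{2a}\bigl(\zeta^d/(1-\zeta^\ell)\bigr)$ by differentiating the geometric series $\sum_{n\ge0}\zeta^{\ell n+d} = \zeta^d/(1-\zeta^\ell)$ under the $\mathcal D_z$ operator $2a$ times. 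The condition $\imm(z)>0$ makes this literally the convergent geometric series; the other sub-cases of (i) follow because the Lerch zeta function still has the required analytic continuation and the contour shift is justified precisely when the would-be exponentially large Gaussian terms are absent — i.e.\ exactly the stated region on $z_0$.

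For part (ii) the same contour argument produces the asymptotic series $\sum_a \mathcal D_z^{2a}(\zeta^d/(1-\zeta^\ell))\frac{(-2\pi t)^a}{a!}$, but now the Lerch zeta function has genuine poles/growth forcing extra main terms: these are extracted by applying the theta transformation (Poisson summation in $n$) to the original sum, which converts $F_{d,\ell}$ into $(2\ell^2 t)^{-1/2} e^{2\pi ijd/\ell - \pi z_0^2/(2\ell^2 t)} \sum_n e^{-\pi n^2/(2\ell^2 t) - \pi z_0 n/(\ell^2 t) - 2\pi i nd/\ell}$ plus a complementary incomplete sum; one checks that a term indexed by $n$ is of size $\gg t^{N+1}$ for all $N$ precisely when $|n+x_0|\le|y_0|$ — the real part of the exponent $-\pi(n+x_0)^2/(2\ell^2 t)+\pi y_0^2/(2\ell^2 t)$ is nonnegative iff $|n+x_0|\le|y_0|$ — so exactly those $n$ are retained and the rest absorbed into the error. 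Matching the two expansions (the convergent-for-large-$t$ side and the transformed side) on the overlap pins down the constants, including the $e^{2\pi ijd/\ell}$ factor coming from $z=(z_0+j)/\ell$. Part (iii) is the degenerate specialization $z_0=0$, $x_0=y_0=0$: here $1-\zeta^\ell=0$ so one cannot use the geometric series; instead expand $\sum_{n\ge0} q^{(\ell n+d)^2}$ by Euler–Maclaurin with the Hurwitz zeta $\zeta(s,d/\ell)$, whose special values $\zeta(-a, d/\ell) = -B_{a+1}(d/\ell)/(a+1)$ produce the Bernoulli polynomials, and the constant $\frac12(2\ell^2t)^{-1/2}$ is the single surviving $n=0$ theta term (the ``$\frac12$'' from $F$ being a half-lattice sum).

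The main obstacle is case (ii): making rigorous the claim that exactly the terms with $|n+x_0|\le|y_0|$ must be kept — this requires carefully tracking the real parts of all exponents through Poisson summation, handling the boundary subtlety when $|n+x_0|=|y_0|$ (where a term is of order $t^{-1/2}$, not exponentially large, so it genuinely belongs in the main term), and showing the incomplete Gaussian sum over the remaining $n$ is $O(t^{N+1})$ uniformly. A secondary technical point is justifying the contour shifts for the Lerch/Hurwitz zeta functions uniformly in the relevant $z$-region, i.e.\ establishing the needed polynomial growth in vertical strips.
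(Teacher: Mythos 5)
Your proposal is correct in outline, and for $\im(z)\neq 0$ it follows essentially the paper's route: the paper's Lemma \ref{3.1} is exactly your convergent geometric-series identity for $\im(z)>0$, and its Lemma \ref{3.3} is your Poisson-summation step, implemented as the decomposition $F_{d,\ell}(z;\tau)=\mathcal{M}_{d,\ell}(z;\tau)-F_{\ell-d,\ell}(-z;\tau)$ with the modular transformation \eqref{thetatrans} applied to the full lattice sum $\mathcal{M}_{d,\ell}$ and Lemma \ref{3.1} applied to the complementary piece (which has positive imaginary part); your criterion $\ree\bigl(-(n+z_0)^2\bigr)\geq 0 \Leftrightarrow |n+x_0|\le|y_0|$ for which Gaussian terms survive is precisely the paper's. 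Where you genuinely diverge is the real-$z$ case. You propose a direct Mellin/contour-shift argument: for $z\in\mathbb{R}\setminus\frac{1}{\ell}\mathbb{Z}$ the relevant Dirichlet series is a Lerch transcendent $\Phi(e^{2\pi i \ell z},2s,\cdot)$ with unimodular non-trivial twist, entire in $s$, so only the poles of $\Gamma(s)$ contribute; for $z\in\frac{1}{\ell}\mathbb{Z}$ the Hurwitz zeta's pole at $s=1/2$ produces the $\frac12(2\ell^2t)^{-1/2}$ term and its values at negative integers produce the Bernoulli polynomials. The paper instead imports the error-function expansion of \cite{BK} valid only for $|z|<1/(4\ell)$ (Lemma \ref{lemma0.1}, Corollary \ref{4.2}), then reaches a general real $z$ by writing $\ell z=w_0+c/h$ with $|w_0|<1/(4h)$, decomposing $F_{d,\ell}$ into $h$ partial thetas of modulus $\ell h$ twisted by $h$-th roots of unity so that $\sum_{j=0}^{h-1}e^{2\pi icj/h}=0$ kills the erf main term (Lemma \ref{4.4}), and proving a covering lemma (Lemma \ref{open.cover}) to see that every real non-integer is so representable. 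Your route is more uniform and avoids the covering argument, at the cost of having to establish the analytic continuation and vertical-strip polynomial growth of the Lerch transcendent at an arbitrary (possibly irrational) unimodular parameter, and of treating separately the finitely many terms with $\ell n+d\leq 0$ (the paper handles the latter via the shift in Lemma \ref{lem_fshift}); the paper's erf-based route has the side benefit of uniform control across the transition at $z=0$, which it exploits elsewhere. Two small points to tighten: in part (i) your claim that the contour shift "is justified precisely when the exponentially large Gaussian terms are absent" does not literally apply to the subcase $\im(z)<0$, $|x_0|>|y_0|$, where the Lerch series diverges; that subcase should simply be read off from your part (ii) argument with an empty Gaussian sum. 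And the "matching of expansions on the overlap" you invoke in (ii) is unnecessary once you use the exact decomposition into $\mathcal{M}_{d,\ell}$ and the complementary partial theta.
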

As a corollary,
we deduce the asymptotic behavior of the Jacobi partial theta functions $F_{d,\ell}$, which are again dependent on the location of $z\in\mathbb C$. 
\begin{corollary}\label{5.2}
We have the following behavior, as $t\to 0^+$.
\begin{itemize}[leftmargin=*,align=left]
\item[\rm{(i)}] If $\imm(z)>0$ or $(\imm(z)<0$ and $|x_0|>|y_0|)$ or $(z\in\mathbb{R}\setminus \frac{1}{\ell}\Z)$,  then
$$
F_{d,\ell}\left(z;it\right)\sim\frac{\zeta^d}{1-\zeta^\ell}.
$$
\item[\rm{(ii)}] If $\imm(z)<0,~ |x_0|\leq |y_0|,$ and $~x_0\ne1/2$, then
 $$
F_{d,\ell}\left(z;it\right)\sim\left(2\ell^2t\right)^{-\frac12}e^{\frac{2\pi ijd}{\ell}-\frac{\pi z_0^2}{2\ell^2t}}.
$$

\item[\rm{(iii)}] If $\imm(z)<0,~ |x_0|\leq |y_0|,$ and $~x_0=1/2$, then
$$
F_{d,\ell}\left(z;it\right)\sim2\left(2\ell^2t\right)^{-\frac12}\cos\left(\frac{\pi}{\ell}\left(d+\frac{y_0}{2\ell t}\right)\right)e^{-\frac{\pi}{8\ell^2t}+\frac{\pi y_0^2}{2\ell^2t}+\frac{\pi i (2j+1)d}{\ell}}.
$$

\item[\rm{(iv)}] If $z\in \frac{1}{\ell}\mathbb Z$, then 
$$
F_{d,\ell}\left(z;it\right)\sim\frac{\zeta^d}{2\ell(2t)^{\frac{1}{2}}}.
$$
\end{itemize}
\end{corollary}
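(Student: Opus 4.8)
The plan is to read off each of (i)--(iv) directly from Theorem \ref{Fasex} with $N=0$, by isolating in each case the single term (or, in one case, the pair of terms) of largest order as $t\to 0^+$ and checking that every remaining contribution is of strictly smaller order. For part (i), Theorem \ref{Fasex}(i) with $N=0$ already gives $F_{d,\ell}(z;it)=\frac{\zeta^d}{1-\zeta^\ell}+O(t)$, so all that is needed is that the leading coefficient is a nonzero constant: in the first two hypotheses $|\zeta^\ell|=e^{-2\pi\ell\,\imm(z)}\neq 1$, and in the third $\ell z\notin\mathbb Z$ forces $\zeta^\ell\neq 1$, while $\zeta^d\neq 0$ always, so $\frac{\zeta^d}{1-\zeta^\ell}\neq 0$.

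For parts (ii) and (iii) I would start from Theorem \ref{Fasex}(ii) with $N=0$. Completing the square, the $n$-th summand of the finite sum, taken together with the prefactor, equals $(2\ell^2t)^{-\frac12}\exp\!\big(-\tfrac{\pi(z_0+n)^2}{2\ell^2t}-\tfrac{2\pi i nd}{\ell}+\tfrac{2\pi i jd}{\ell}\big)$, whose modulus is $(2\ell^2t)^{-\frac12}\exp\!\big(\tfrac{\pi}{2\ell^2t}(y_0^2-(x_0+n)^2)\big)$; meanwhile the power-series part is $O(1)$ since $\imm(z)<0$ keeps $1-\zeta^\ell\neq 0$. Because $-\tfrac12<x_0\le\tfrac12$, the integer $n$ minimizing $|x_0+n|$ is $n=0$, uniquely so when $x_0\neq\tfrac12$; in that case the $n=0$ term beats every other summand by a factor $e^{-c/t}$ with $c>0$, and it beats the $O(1)$ part because $\imm(z)<0$ makes $y_0\neq 0$ while $y_0^2\ge x_0^2$ keeps the surviving exponential $\gg t^{-1/2}$, which gives (ii). If instead $x_0=\tfrac12$, then $n=0$ and $n=-1$ both achieve $|x_0+n|=\tfrac12$, and both indices occur since $|y_0|\ge|x_0|=\tfrac12$; substituting $z_0=\tfrac12+iy_0$ into the two completed squares, pulling out the common factor $e^{\frac{\pi i(2j+1)d}{\ell}}e^{-\frac{\pi}{8\ell^2t}+\frac{\pi y_0^2}{2\ell^2t}}$, and recombining the remaining $e^{\pm i(\cdots)}$ into a cosine reproduces exactly the expression in (iii), with all other summands ($|x_0+n|\ge\tfrac32$) and the power series again of strictly smaller order.

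Part (iv) follows from Theorem \ref{Fasex}(iii) with $N=0$: the first term equals $\tfrac12(2\ell^2t)^{-\frac12}\zeta^d=\frac{\zeta^d}{2\ell(2t)^{1/2}}$, which grows like $t^{-1/2}$, whereas the $a=0$ Bernoulli term and the error term are $O(1)$. The one genuinely delicate point is part (iii): one must recognize that two summands of Theorem \ref{Fasex}(ii) attain the maximal modulus precisely when $x_0=\tfrac12$ — this is the source both of the bifurcation between (ii) and (iii) and of the oscillatory cosine — and one should bear in mind that this cosine vanishes on a discrete set of $t$, so the relation $\sim$ in (iii) must be read as an identification of the leading term of the asymptotic expansion rather than literally. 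Everything else is bookkeeping among the three scales $t^{-1/2}$, $O(1)$, and $e^{-c/t}$.
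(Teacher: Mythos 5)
Your proof is correct and takes essentially the same route as the paper: the authors deduce Corollary \ref{5.2} from Corollaries \ref{cor_fgasypos}, \ref{3.4}, and \ref{cor_fgreal}, which are exactly the leading-term extractions you perform (the $n=0$ term dominates the restricted sum, joined by $n=-1$ precisely when $x_0=\tfrac12$, against the three scales $t^{-1/2}$, $O(1)$, and $e^{-c/t}$) from the expansions constituting Theorem \ref{Fasex}. Your caveat that the oscillating cosine in (iii) forces ``$\sim$'' to be read as identifying the leading term of the expansion rather than as a literal ratio limit is a fair point that the paper leaves implicit.
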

 
  \emph{False theta functions}, similar to partial theta functions, are usually defined as sums over a full lattice, but sign changes are made in the summands so that modularity properties are lost. These functions also enjoy a rich history within the theory of $q$-series and integer partitions, and are often intertwined with partial theta functions \cite{AndrewsBook}.  A  {typical} example of a false theta function is
  $$ \sum_{ n \in \mathbb{Z} } {\rm sgn}(n) q^{( \ell n+d)^2},$$
where ${\rm sgn}(x):=1$ for $x \geq 0,$ and ${\rm{sgn}}(x):=-1$ for $x <0$, which may also be viewed as a difference of two partial theta functions.
 In this paper, we are particularly interested in the following difference of two Jacobi partial theta functions    
 $$ G_{d,\ell}(z;\tau):=F_{d,\ell}(z;\tau)-F_{-d,\ell}(z;\tau).$$
Observe that $G_{d, \ell}(0;\tau)+q^{d^2}$ is  a  false theta function as above.
 In Section   \ref{sec_fgsummary} (Theorem \ref{Gasex} and Corollary \ref{5.4}), we establish asymptotic expansions and asymptotic behavior of the functions $G_{d,\ell}$  analogous to Theorem \ref{Fasex} and Corollary \ref{5.2}.   
 All of these results are of independent interest, and also have consequences in representation theory, which we now discuss. 

Partial and false theta functions have recently appeared in the representation theory of vertex algebras in the study of {\em regularized} characters of 
$(1,p)$-singlet vertex operator algebra  \cite{BM,CM,CMW} $(p\geq 2)$.
Previously, in \cite{M},  it was observed that the usual characters ${\rm ch}[M_{r,s}](\tau)$ of atypical modules $M_{r,s}$ ($M_{1,1}$ the singlet vertex algebra),  can be written as 
quotients of differences of two partial theta series and the Dedekind $\eta$-function.
These characters are interesting from several different standpoints. 
For example, they admit elegant representations as multi-hypergometric $q$-series and are also quantum modular forms \cite{ZagierQ}
with quantum set $\mathbb{Q}$ (for details see \cite{BM}).
In \cite{CM}, motivated by developments surrounding the Verlinde formula in conformal field theory, atypical and typical characters are regularized by using a new complex parameter $\varepsilon$, which can be also viewed as the $U(1)$-charge in physics. The resulting expression, denoted by $\ch[M_{r,s}^\varepsilon](\tau)$ ($r \in \mathbb{Z}$, $1 \leq s \leq p-1$), has in the numerator  a difference of two partial theta functions discussed earlier. This numerator, denoted in Section 6 by $\mathcal{C}_{r,s}(\varepsilon; \tau)$, gives rise to a false theta function. Indeed, if  specialized at $\varepsilon=0$, $\mathcal{C}_{1,s}$ is a false theta function as above, while for $r \neq 1$, $\mathcal{C}_{r,s}$ can be written as the sum 
of a false theta function and a finite $q$-series \cite{BM}.
Interestingly, the regularized characters admit a certain modular-type transformation formula if $\varepsilon \notin i \mathbb{R}$  \cite{CM}. 
This fact was instrumental for proving the Verlinde formula of characters  \cite{CM,CMW}, which is conjecturally isomorphic to the Grothendieck ring of the category  of modules for the singlet vertex algebra.
It is known that ordinary fusion ring admits one-dimensional representation coming from quantum dimensions, thus in \cite{CM}, 
{ the} regularized quantum dimension of $M$ was defined as
\begin{equation} \label{quantum-dim} 
{\rm qdim}(M^\varepsilon):=\lim_{t \to 0^+} \frac{\ch[M^\varepsilon](i t)}{\ch[V^\varepsilon](i t)},
\end{equation}
now depending on $\varepsilon$. Without going into details, we only mention that regularized quantum dimensions define a representation of the Verlinde algebra of characters. Very recently, in \cite{CMW}, regularized quantum dimensions of irreducible modules for the singlet algebras were computed 
on a certain subset of the $\varepsilon$-plane. It was observed that quantum dimension have peculiar  { properties} in different regions on the $\varepsilon$-plane, roughly 
corresponding to ${\rm Re}(\varepsilon)>0$ and ${\rm Re}(\varepsilon)<0$.

In this paper, we extend and generalize several results from \cite{CMW}. Firstly, in Theorem \ref{6.2}, we determine explicit (full) asymptotic expansions of all regularized 
irreducible characters simply as  corollaries to the more general asymptotic formulas for the Jacobi partial and false theta functions (Theorem \ref{Fasex}, Corollary \ref{5.2}, Theorem \ref{Gasex}, and Corollary \ref{5.4}). These results immediately imply several  properties observed earlier in \cite{CMW}, including Stokes' phenomenon.  Moreover, we extend  known formulas for 
regularized quantum dimensions in \cite{CMW} to the whole $\varepsilon$-plane including the imaginary axis. To describe this, we  write $\varepsilon=(\varepsilon_0+ik)/\sqrt{2p} $, with $k \in \mathbb{Z}$, $\varepsilon_0 = u_0 + i v_0$ with $u_0, v_0 \in \mathbb R,$ and $-1/2 < v_0 \leq 1/2 $. \\
\indent
Our next result gives formulas for regularized quantum dimensions - as defined in (\ref{quantum-dim}) - of the $(1,p)$-singlet algebra modules, both typical and atypical. We point out that explicit formulas for ${\rm ch}[F^\varepsilon_{\lambda}](\tau)$ and ${\rm ch}[M_{r,s}^\varepsilon](\tau)$  are given in (\ref{Fl}) and (\ref{atypical}), respectively. As in Theorem \ref{Fasex} and Corollary \ref{5.2}, Theorem \ref{qdim} exhibits Stokes' phenomenon. 
\begin{theorem} \label{qdim} Assume the notation and hypotheses above.
\begin{itemize}[leftmargin=*,align=left] 
\item[\rm{(i)}]    If one of the following are true:  
$\ree(\varepsilon )<0$ {or}  $(\ree(\varepsilon)>0$ {and} $|v_0| > |u_0|)$  {or} $(\ree (\varepsilon) >0, p|k, |u_0|<1-|v_0|$,  {and} $v_0 \not = 1/2)$ {or } $(u_0 =0$ {and} $v_0 \not=0)$,
 {then}   
\begin{align}\label{qdimM13i}
{\rm qdim}[M^\varepsilon_{r,s}] &= e^{\pi \varepsilon \sqrt{2p}(1-r)} \frac{\sinh\left(\frac{\sqrt{2}\pi s\varepsilon}{\sqrt{p}}\right)}{ \sinh\left(\frac{\sqrt{2}\pi \varepsilon}{\sqrt{p}}\right) }, \\ \label{qdimF13i} {\rm qdim}[F^\varepsilon_\lambda] &=  e^{2\pi \varepsilon \left(\lambda-  \sqrt{\frac{p}{2}}+\sqrt{\frac{1}{2p}}  \right)} \frac{\sinh\left(\sqrt{2p}\pi \varepsilon\right)}{ \sinh\left(\frac{\sqrt{2}\pi \varepsilon}{\sqrt{p}}\right) }.\end{align}

\item[\rm{(ii)}]   If $\ree(\varepsilon)>0$, $p\nmid k$,  $|v_0| \leq |u_0|,$ and $v_0\ne1/2$,  then

$${\rm qdim}[M^\varepsilon_{r,s}]= (-1)^{k(r+1)}\frac{\sin\left(\frac{\pi ks}{p}\right)}{  \sin\left(\frac{\pi k}{p}\right)},  \ \ \ \   {\rm qdim}[F^\varepsilon_\lambda]=0.$$
\item[\rm{(iii)}]
 {If $\ree(\varepsilon)>0,~p| k,~1-|u_0| \leq |v_0|\leq |u_0|,\text{ and } v_0\ne1/2$, then }  

$${\rm qdim}[M^\varepsilon_{r,s}]=(-1)^{(r+1)(k+1)+\frac{k(s+1)}{p}}\frac{\sin\left(\frac{\pi s}{p}\right)}{\sin\left(\frac{\pi}{p}\right)}, \ \  \ {\rm qdim}[F^\varepsilon_{\lambda}]=0.$$

\item[\rm{(iv)}]  {If $\ree(\varepsilon)  >0$, $v_0 =1/2$, and $|u_0| \geq 1/2$, then  $${\rm qdim}[F^\varepsilon_\lambda]=0,$$ and ${\rm qdim}[M^\varepsilon_{r,s}]$ exists 
if and only if }
\[
 \tan\left(\frac{\pi s }{2 p}+\frac{\pi r}{2}\right) \tan\left(\frac{\pi}{2p}\right)
 = \tan\left(\frac{\pi(2k+1)s}{2p}+\frac{\pi r}{2}\right)  \tan\left(\frac{\pi(2k+1)}{2p} \right).
\]

 {If this condition is satisfied, then}
$${\rm qdim}[M_{r,s}]= \frac{(-1)^{(r+1)k+1}  \sin\left( \frac{\pi s}{2 p}+\frac{\pi r}{2} \right)\cos\left( \frac{\pi}{2p} (2k+1)s+\frac{\pi r}{2}\right)}{   \cos\left(\frac{\pi}{2p} \right)\sin\left( \frac{\pi}{2p}(2k+1) \right)}. $$
\item[\rm{(v)}]  {If $\varepsilon_0=0$ and $p \nmid ks$, then}  
\[
{\rm qdim}[M^\varepsilon_{r,s}]=  (-1)^{k(r+1)} \frac{\sin \left(\frac{\pi ks}{p} \right)}{\sin \left(\frac{\pi k}{p} \right)}, \ \ \ {\rm qdim}[F^\varepsilon_\lambda]=0.
\]

\item[\rm{(vi)}]  {If   $ \varepsilon_0 =0$, $p \mid ks$, then}  
\begin{enumerate}
\item[\rm{(a)}]
 {if $k \notin p \mathbb{Z}$, then}
\[
{\rm qdim}[M^\varepsilon_{r,s}] =0, \ \  {\rm qdim}[F^\varepsilon_\lambda]=0;
\]
\item[\rm{(b)}] {if 
$k \in p (2 \mathbb{Z})$, then }
\[
{\rm qdim}[M^\varepsilon_{r,s}] =s, \ \  {\rm qdim}[F^\varepsilon_\lambda]=e^{\frac{2 \pi i k \lambda}{\sqrt{2p}}} p;
\]
\item[\rm{(c)}] { if
$k \in p(2\mathbb{Z}+1)$, then}
\[
{\rm qdim}[M^\varepsilon_{r,s}] =(-1)^{(s+1)+p(r+1)}s, \ \   {\rm qdim}[F^\varepsilon_\lambda]=  e^{\frac{2 \pi i k \lambda}{\sqrt{2p}}}p .
\]
\end{enumerate}
\end{itemize}

\end{theorem}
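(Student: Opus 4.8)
The plan is to obtain Theorem~\ref{qdim} directly from Theorem~\ref{Fasex}, Corollary~\ref{5.2}, Theorem~\ref{Gasex} and Corollary~\ref{5.4}. By the explicit formulas \eqref{Fl} and \eqref{atypical}, together with $V=M_{1,1}$ and the description of $\mathcal C_{r,s}$ from \cite{BM}, each regularized character is --- after dividing out the common factor $\eta(\tau)^{-1}$, which cancels in \eqref{quantum-dim} --- an elementary prefactor (a power of $q$ times a monomial in $\zeta$) times either a single monomial (in the typical case $\ch[F^\varepsilon_\lambda]$) or a false theta function $G_{d,\ell}(z;\tau)$ plus, when $r\ne1$, a Laurent polynomial in $q,\zeta$ (in the atypical case, $\mathcal C_{r,s}=\ch[M^\varepsilon_{r,s}]\,\eta(\tau)$). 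Here $\ell$ depends only on $p$, the parameters $d$ and the prefactors depend on $r,s$ (resp.\ $\lambda$), and the elliptic variable $z=z(\varepsilon)$ is the \emph{same} affine-linear function of $\varepsilon$ for every module. Consequently
\[
{\rm qdim}[M^\varepsilon_{r,s}]=\lim_{t\to0^+}\frac{\mathcal C_{r,s}(\varepsilon;it)}{\mathcal C_{1,1}(\varepsilon;it)},\qquad
{\rm qdim}[F^\varepsilon_\lambda]=\lim_{t\to0^+}\frac{(\text{elementary prefactor of }\ch[F^\varepsilon_\lambda])\cdot\eta(it)}{\mathcal C_{1,1}(\varepsilon;it)},
\]
and since the numerator of the second limit tends to a fixed nonzero constant, ${\rm qdim}[F^\varepsilon_\lambda]$ vanishes precisely when $\mathcal C_{1,1}(\varepsilon;it)\to\infty$ and otherwise equals an explicit ratio. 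The first concrete step is to substitute $\varepsilon=(\varepsilon_0+ik)/\sqrt{2p}$, $\tau=it$, and write $z=(z_0+j)/\ell$ in the normalized form of Theorem~\ref{Fasex}; a short computation gives (up to a fixed positive constant) $x_0=v_0$, $y_0=-u_0$, $j\propto k$, so that $\imm(z)$ has the sign of $-\ree(\varepsilon)$, the inequality $|x_0|\le|y_0|$ becomes $|v_0|\le|u_0|$, the case $x_0=1/2$ becomes $v_0=1/2$, and $z\in\tfrac1\ell\Z$ occurs exactly when $\varepsilon_0=0$.

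In the generic region of part (i) --- $\ree(\varepsilon)<0$, or $\ree(\varepsilon)>0$ with $|v_0|>|u_0|$, or $u_0=0$ with $v_0\ne0$ --- the point $(z;it)$ lies in case (i) of Corollary~\ref{5.4}, so $G_{d,\ell}(z;it)\to(\zeta^d-\zeta^{-d})/(1-\zeta^\ell)$ and the Laurent polynomial tends to its value at $q=1$. Writing $1-\zeta^\ell=-2e^{\pi i\ell z}\sinh(\pi i\ell z)$ and $\zeta^d-\zeta^{-d}=2\sinh(2\pi idz)$, both $\mathcal C_{1,1}(\varepsilon;0^+)$ and $\mathcal C_{r,s}(\varepsilon;0^+)$ become an exponential in $\varepsilon$ times a quotient of two $\sinh$'s; forming the two limits above and simplifying the exponentials (in particular $e^{\pi\varepsilon\sqrt{2p}(1-r)}$ emerges from the ratio of prefactors) gives exactly \eqref{qdimM13i} and \eqref{qdimF13i}. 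The fourth sub-case of part (i) --- $\ree(\varepsilon)>0$, $p\mid k$, $|u_0|<1-|v_0|$, $v_0\ne1/2$ --- is again covered by these formulas once one observes that $p\mid k$ (equivalently $j\in p\Z$) makes the two phases $e^{\pm2\pi ijd/\ell}$ coincide in the Gaussian term of Corollary~\ref{5.2}(ii), so that this term cancels for both $\mathcal C_{1,1}$ and $\mathcal C_{r,s}$ and the subleading term $\sum_a\mathcal D_z^{2a}\big(\tfrac{\zeta^d}{1-\zeta^\ell}\big)\tfrac{(-2\pi t)^a}{a!}$ of Theorem~\ref{Fasex}(ii), whose value at $t=0$ is $\tfrac{\zeta^d}{1-\zeta^\ell}$, takes over.

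In the remaining $\ree(\varepsilon)>0$ regions the Gaussian term of Corollary~\ref{5.2}(ii)--(iii) is present, so $\mathcal C_{1,1}(\varepsilon;it)$ diverges like $t^{-1/2}e^{-\pi z_0^2/(2\ell^2t)}$ with $-\pi z_0^2/(2\ell^2)>0$; hence ${\rm qdim}[F^\varepsilon_\lambda]=0$ throughout parts (ii), (iii), (iv). For ${\rm qdim}[M^\varepsilon_{r,s}]$ the Gaussian factor is common to $\mathcal C_{r,s}$ and $\mathcal C_{1,1}$ --- the point $z$, hence $z_0$, is the same and only $d$ changes --- so it cancels in the ratio and one is left with the ratio of the finite sums $\sum_{|n+x_0|\le|y_0|}(\cdots)$ of Theorem~\ref{Fasex}(ii), rewritten (after combining the $n=0$ contributions of $F_{d,\ell}$ and $F_{-d,\ell}$) as $2i\sin(2\pi jd/\ell)$ plus exponentially smaller terms. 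When $p\nmid k$ (part (ii)) the vacuum value $\sin(2\pi jd_1/\ell)=\sin(\pi k/p)$ is nonzero, and the quotient reduces to $(\text{prefactor ratio})\cdot\sin(2\pi jd_s/\ell)/\sin(2\pi jd_1/\ell)=(-1)^{k(r+1)}\sin(\pi ks/p)/\sin(\pi k/p)$. When $p\mid k$ (part (iii)) this leading $n=0$ contribution \emph{vanishes}, and --- because the hypothesis $1-|u_0|\le|v_0|$ keeps the term $n=-1$ (or $n=1$) in range --- the genuine leading term of both $\mathcal C_{r,s}$ and $\mathcal C_{1,1}$ becomes the $n=\mp1$ Gaussian term, again with a common Gaussian factor, yielding $(-1)^{(r+1)(k+1)+k(s+1)/p}\sin(\pi s/p)/\sin(\pi/p)$ after the sign bookkeeping of the phases $e^{\pm2\pi i(j\pm1)d/\ell}$. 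The genuinely delicate case is $v_0=1/2$, i.e.\ $x_0=1/2$ (part (iv)): here Corollary~\ref{5.2}(iii) replaces the single Gaussian by $2\cos\big(\tfrac\pi\ell(d+\tfrac{y_0}{2\ell t})\big)e^{(\cdots)/t}$, whose phase depends on $t$, so the ratio ${\rm qdim}[M^\varepsilon_{r,s}]=\lim\mathcal C_{r,s}/\mathcal C_{1,1}$ is a ratio of two $t$-dependent cosines and exists only when they are asymptotically in phase; unwinding this condition produces exactly the displayed $\tan$-identity, and under it the product-to-sum formula for the cosines yields the stated value.

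Finally, in parts (v)--(vi) one has $\varepsilon_0=0$, hence $z_0=0$ and $z\in\tfrac1\ell\Z$, so Corollary~\ref{5.2}(iv)/Theorem~\ref{Fasex}(iii) give $G_{d,\ell}(z;it)\sim(\zeta^d-\zeta^{-d})/(2\ell(2t)^{1/2})$ with a subleading Bernoulli correction. Since $\zeta^{d_1}=e^{\pi ik/p}$, the vacuum diverges like $t^{-1/2}$ exactly when $p\nmid k$; thus in part (v) ($p\nmid ks$, so $p\nmid k$) one gets ${\rm qdim}[F^\varepsilon_\lambda]=0$ (bounded over divergent) and ${\rm qdim}[M^\varepsilon_{r,s}]=(\text{prefactor ratio})\cdot\sin(\pi ks/p)/\sin(\pi k/p)=(-1)^{k(r+1)}\sin(\pi ks/p)/\sin(\pi k/p)$ as a ratio of the two $t^{-1/2}$ leading coefficients. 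In part (vi) ($p\mid ks$) the $t^{-1/2}$ coefficient of $\mathcal C_{r,s}$ vanishes because $\zeta^{d_s}=e^{\pi iks/p}=\pm1$, so its limit is governed by the $a=0$ Bernoulli term $B_1(d_s/\ell)=\tfrac{d_s}{\ell}-\tfrac12$ (plus the Laurent polynomial, which one checks vanishes at $q=\zeta=1$): if $p\nmid k$ the vacuum still diverges, giving ${\rm qdim}[M^\varepsilon_{r,s}]=0$ and ${\rm qdim}[F^\varepsilon_\lambda]=0$ (part (vi)(a)); if $p\mid k$ then $\zeta^{d_1}=(-1)^{k/p}$, the vacuum $t^{-1/2}$ coefficient also vanishes, both limits are finite Bernoulli values, and their ratio --- tracking $(-1)^{k/p}$ and $(-1)^{ks/p}$ --- gives $s$ and $e^{2\pi ik\lambda/\sqrt{2p}}p$ in part (vi)(b) and their signed variants in part (vi)(c). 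The main obstacle is not any single estimate but the exhaustive bookkeeping: pinning down $(x_0,y_0,j)$ for every module, deciding in each Stokes region which finitely many Gaussian terms are dominant, and handling the three borderline loci ($v_0=1/2$, $u_0=0$, and $\varepsilon_0=0$ with $p\mid ks$), where leading terms cancel and the limit either fails to exist generically or is decided by the subleading terms of Theorem~\ref{Fasex}.
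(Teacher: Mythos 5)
Your overall strategy is the same as the paper's: translate $\varepsilon=(\varepsilon_0+ik)/\sqrt{2p}$ into the Jacobi variable $z$ (so that $x_0=v_0$, $y_0=-u_0$, $j=k$), apply the Stokes-region asymptotics of Theorems \ref{Fasex}/\ref{Gasex} to $\mathcal{C}_{r,s}$ and to the vacuum $\mathcal{C}_{1,1}$, and take ratios; the paper merely packages the intermediate step as Theorem \ref{6.2} and Corollary \ref{6.3} before assembling Theorem \ref{qdim}, and your treatment of parts (ii)--(v), including the $A+B\tan T$ analysis behind the $\tan$-criterion in part (iv), matches the paper's.

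There is, however, one concrete error. In part (vi) you assert that the finite correction term (the ``Laurent polynomial'' coming from Lemma \ref{lem_fshift}, i.e.\ the second summand in Lemma \ref{5.1}) ``vanishes at $q=\zeta=1$.'' It does not: the expression $\frac{\zeta^{d}\left(1-\zeta^{-2p(r-2)}\right)}{1-\zeta^{2p}}$ has a removable singularity at $\zeta^{2p}=1$ with limit $(2-r)\zeta^{d}$, and this nonzero contribution is exactly what the paper isolates in the proof of Corollary \ref{6.3}(vi) via $\lim_{t\to 0}\frac{1-e^{-2pt(r-2)}}{1-e^{2pt}}=2-r$. Without it, the leading constant of $\mathcal{C}_{r,s}$ would be $\zeta^{d}\left(\frac{s}{p}+r-2\right)$ rather than $\zeta^{d}\frac{s}{p}$, and the resulting quantum dimension would depend on $r$ instead of equaling $s$ (up to sign); so the step as written gives the wrong answer in (vi)(b) and (vi)(c). (You do account for this term correctly in part (i), so the fix is local.) A second, smaller omission: in the sub-case $u_0=0$, $v_0\neq 0$ of part (i), Corollary \ref{5.4}(i) applies only when $2dz\notin\mathbb{Z}$, i.e.\ $s(v_0+k)/p\notin\mathbb{Z}$; in the complementary case $\mathcal{C}_{r,s}$ decays like $t$ (Corollary \ref{6.3}(vii)) while $\mathcal{C}_{1,1}$ tends to a nonzero constant, so the quantum dimension is $0$, and one must observe separately that this is consistent with \eqref{qdimM13i} because $\sinh\left(\frac{\sqrt{2}\pi s\varepsilon}{\sqrt{p}}\right)$ vanishes exactly there.
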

\begin{remarks}\noindent 
\begin{itemize}[leftmargin=*,align=left]
\item[(i)] Parts (i) (not including the $u_0=0, v_0 \neq 0$ region), (ii), and (iii) of Theorem \ref{qdim}  were  proven in \cite{CMW} by using a different method. \smallskip
\item[(ii)] Part (vi) of Theorem \ref{qdim} for $k=0$ was given in \cite{CM}. \smallskip
\item[(iii)]   In part (iv), there are several choices of parameters, some of which are dependent on $k$, which yield a 
solution. For instance, if $r$ is odd and $s=1$, then the quantum dimension of $M_{r,s}$ is $1$, and if $r$  is even and $s=p-1$, then the quantum dimension is $-1$.
\end{itemize}
\end{remarks}

This paper is structured as follows. In Section 2, we introduce basic functions and prove several technical lemmas. In Sections 3 and 4, we determine
asymptotic properties  of the partial theta functions $F_{d,\ell}$ and $G_{d,\ell}$ for ${\rm Im}(z) \neq 0$ and   $\imm(z) = 0$, respectively. In particular, their  asymptotic  behavior is stated in Corollaries \ref{3.4}, \ref{3.5}, and \ref{cor_fgreal}.  In Section \ref{sec_fgsummary}, we prove Theorem \ref{Fasex} and Corollary \ref{5.2}, and establish the analogous results for the functions $G_{d,\ell}$ in Theorem \ref{Gasex} and Corollary \ref{5.4}.   Finally, in Section \ref{sec_singlet}, we prove Theorem \ref{qdim}, and establish related asymptotic expansions in Theorem \ref{6.2} and Corollary \ref{6.3}.
 
\section{Auxiliary functions} \label{sec_aux}In this section, we provide some preliminary results on certain functions required to prove our main results.  Namely, we establish the asymptotic properties of some functions defined using the error function,  we give some transformation properties for various theta functions, and we establish expansions for some special functions.
\subsection{Properties of error functions} The error function $\operatorname{erf}$ is defined,  for $w\in \mathbb C$, by 
$$\operatorname{erf}(w) := \frac{2}{\sqrt{\pi}} \int_0^w e^{-t^2}dt.$$ 

We establish asymptotic properties of certain functions defined using $\operatorname{erf}$ below;  in doing so, we make use of the following expansions { \cite[7.6.2, 7.12 (i)]{NIST} } of $\operatorname{erf}$ and the complementary error function $\operatorname{erfc}(w) := 1-\operatorname{erf}(w)$,
the first of which converges for any $w\in \mathbb C$, and the second of which holds for any $N \in \mathbb N_0$,
 as $w\to 0$, for $\lvert \mathrm{Arg}(w) \rvert <3\pi /4$,
 \begin{align}\label{NIST2}
\operatorname{erf}(w) & =  e^{-w^2} \sum_{n\geq 1} \frac{w^{2n-1}}{\Gamma \left(n+\frac{1}{2}\right)}, \\
 \label{NIST}
 \text{erfc}(w) & = \frac{e^{-w^2}}{\sqrt{\pi}} \sum_{m=0}^N \frac{(-1)^m \left( \frac{1}{2}\right)_m}{w^{2m+1}} + O\left(w^{-2N-3}\right).
 \end{align}  
\begin{lemma}\label{2.1} 
For $w \in \mathbb{C}$, we have that
\begin{equation}
\label{erid}
\sum_{n\geq 0} \frac{(2w)^{n} \Gamma \left( \frac{n+1}{2} \right)}{n!} = \sqrt{\pi} e^{w^2} \left(1+\operatorname{erf}\left(w \right) \right).
\end{equation}
\end{lemma}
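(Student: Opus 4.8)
The plan is to separate the left-hand sum according to the parity of the index $n$, since $\Gamma\left(\frac{n+1}{2}\right)$ behaves differently on even and odd indices, and to recognize each of the two resulting series. Write $S:=\sum_{n\geq 0} \frac{(2w)^n \Gamma\left(\frac{n+1}{2}\right)}{n!}$ and split $S = S_{\mathrm{even}}+S_{\mathrm{odd}}$, where $S_{\mathrm{even}}$ collects the terms $n=2k$ and $S_{\mathrm{odd}}$ collects the terms $n=2k+1$, $k\geq 0$.

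For the even part, substituting $n=2k$ gives $\Gamma\left(k+\frac12\right)$ in the numerator and $(2k)!$ in the denominator. Using the classical identity $\Gamma\left(k+\frac12\right) = \frac{(2k)!\sqrt{\pi}}{4^k k!}$, the factorials cancel and $S_{\mathrm{even}} = \sqrt{\pi}\sum_{k\geq 0} \frac{(2w)^{2k}}{4^k k!} = \sqrt{\pi}\sum_{k\geq 0}\frac{w^{2k}}{k!} = \sqrt{\pi}\,e^{w^2}$. For the odd part, substituting $n=2k+1$ gives numerator $\Gamma(k+1) = k!$ and denominator $(2k+1)!$, so $S_{\mathrm{odd}} = \sum_{k\geq 0} \frac{(2w)^{2k+1} k!}{(2k+1)!} = 2w\sum_{k\geq 0} \frac{(2w)^{2k} k!}{(2k+1)!}$. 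Rewriting $\frac{k!}{(2k+1)!} = \frac{1}{(2k+1)}\cdot\frac{k!}{(2k)!} = \frac{4^k}{(2k+1)}\cdot\frac{1}{\binom{2k}{k}\,4^k/\cdots}$ is a bit awkward; instead, I would directly compare with the series (\ref{NIST2}) for $\operatorname{erf}$. From (\ref{NIST2}), $e^{w^2}\operatorname{erf}(w) = \sum_{n\geq 1} \frac{w^{2n-1}}{\Gamma\left(n+\frac12\right)}$; applying $\Gamma\left(n+\frac12\right) = \frac{(2n)!\sqrt{\pi}}{4^n n!}$ shows $\frac{1}{\Gamma\left(n+\frac12\right)} = \frac{4^n n!}{(2n)!\sqrt{\pi}}$, and one checks $\frac{4^n n!}{(2n)!} = \frac{2^{2n-1}n!}{(2n)!}\cdot 2 = \frac{2\cdot 4^{n-1}\, 2\, n!}{(2n)!}$; matching powers of $w$ via $2n-1 = 2k+1$, i.e. $n=k+1$, yields exactly $S_{\mathrm{odd}} = \sqrt{\pi}\,e^{w^2}\operatorname{erf}(w)$ after the constants are reconciled.

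Combining, $S = S_{\mathrm{even}} + S_{\mathrm{odd}} = \sqrt{\pi}\,e^{w^2} + \sqrt{\pi}\,e^{w^2}\operatorname{erf}(w) = \sqrt{\pi}\,e^{w^2}\bigl(1+\operatorname{erf}(w)\bigr)$, which is the claim. Convergence for all $w\in\mathbb{C}$ is automatic: the even series is (a multiple of) $e^{w^2}$ and the odd series is dominated termwise by it, so the rearrangement into even and odd parts is legitimate.

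The only genuine obstacle is bookkeeping the normalizing constants $\sqrt{\pi}$, powers of $2$ and $4$, and the shift in summation index when identifying $S_{\mathrm{odd}}$ with $\sqrt{\pi}\,e^{w^2}\operatorname{erf}(w)$ via (\ref{NIST2}); everything else is a mechanical application of the duplication-type identity $\Gamma\left(k+\frac12\right) = \frac{(2k)!\sqrt\pi}{4^k k!}$. An alternative that sidesteps (\ref{NIST2}) entirely is to verify the identity by checking that both sides satisfy the same first-order linear ODE in $w$ with the same value at $w=0$: the right-hand side $f(w) = \sqrt\pi e^{w^2}(1+\operatorname{erf}(w))$ satisfies $f'(w) = 2wf(w) + 2$, and one can confirm the left-hand side satisfies the same relation by differentiating the power series termwise and reindexing, with both sides equal to $\sqrt\pi$ at $w=0$. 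I would include whichever is shorter once the constants are pinned down.
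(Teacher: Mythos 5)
Your proposal is correct and follows exactly the paper's (one-line) proof: split the sum by parity, identify the even part with $\sqrt{\pi}\,e^{w^2}$ via the Taylor series and the duplication formula $\Gamma\left(k+\tfrac12\right)=\frac{(2k)!\sqrt{\pi}}{4^k k!}$, and match the odd part against the series (\ref{NIST2}) for $e^{w^2}\operatorname{erf}(w)$. The constants do reconcile as you anticipate, since $\frac{1}{\Gamma\left(k+\frac32\right)}=\frac{2\cdot 4^k k!}{(2k+1)!\sqrt{\pi}}$ gives $\sqrt{\pi}\,e^{w^2}\operatorname{erf}(w)=\sum_{k\geq 0}\frac{(2w)^{2k+1}k!}{(2k+1)!}=S_{\mathrm{odd}}$ exactly.
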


\begin{proof}
The statement follows by splitting the sum in (\ref{erid}) into even and odd terms, using the Taylor expansion for $e^{w^2}$, as well as  (\ref{NIST2}).
\end{proof}

\indent
Next, we define the function
 \begin{equation*}
 F(t,w) := \frac{w}{\sqrt{t}} e^{\frac{w^2}{t}} \left( 1+\text{erf}\left(\frac{w}{\sqrt{t}}\right) \right), 
 \end{equation*} and determine its asymptotic behavior in Lemma \ref{2.2}.

\begin{lemma}\label{2.2} We have the following asymptotic behavior,  as $t\to 0^+$.
\begin{enumerate}[leftmargin=*,align=left]
 \item[\rm{(i)}] If $\lvert \mathrm{Arg}(w) \rvert \leq \pi /4$, then 
 \[F(t, w) -\frac{2w}{\sqrt{t}} e^{\frac{w^2}{t}} =-\frac{1}{\sqrt{\pi}} + O(t).\]
 \item[\rm{(ii)}]  If $\lvert \mathrm{Arg}(w) \rvert > \pi /4$, then \[F(t,w)= - \frac{1}{\sqrt{\pi}} \left( 1-\frac{t}{2w^2} \right) + O\left(t^2\right). \]
\end{enumerate}
\end{lemma}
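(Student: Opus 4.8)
The plan is to prove the two cases of Lemma \ref{2.2} by feeding the two expansions \eqref{NIST2} and \eqref{NIST} of the error function into the definition of $F(t,w)$ and controlling the tails uniformly in the relevant angular sectors. Throughout I write $u := w/\sqrt t$, so that $F(t,w) = u e^{u^2}(1+\operatorname{erf}(u))$ and, crucially, $|u|\to\infty$ as $t\to 0^+$ since $w$ is a fixed nonzero complex number; moreover $\mathrm{Arg}(u)=\mathrm{Arg}(w)$ because $\sqrt t>0$. The whole point is that the sign of $\re(w^2)$ — equivalently whether $|\mathrm{Arg}(w)|$ is below or above $\pi/4$ — decides whether $e^{u^2}$ blows up or decays, and this is exactly the Stokes line for the error function.

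For part (i), when $|\mathrm{Arg}(w)|\le\pi/4$ we have $\re(u^2)\ge 0$, so $e^{u^2}$ does not decay and it is more convenient to write $1+\operatorname{erf}(u) = 2 - \operatorname{erfc}(u)$. Then
\[
F(t,w) - \frac{2w}{\sqrt t}e^{\frac{w^2}{t}} = u e^{u^2}\bigl(1+\operatorname{erf}(u)\bigr) - 2u e^{u^2} = -u e^{u^2}\operatorname{erfc}(u).
\]
Now I apply \eqref{NIST} with, say, $N=0$: the condition $|\mathrm{Arg}(u)|<3\pi/4$ is satisfied since $|\mathrm{Arg}(u)|\le\pi/4$, giving $\operatorname{erfc}(u) = \frac{e^{-u^2}}{\sqrt\pi}\bigl(\frac1u + O(u^{-3})\bigr)$. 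Hence $-ue^{u^2}\operatorname{erfc}(u) = -\frac1{\sqrt\pi} + O(u^{-2}) = -\frac1{\sqrt\pi} + O(t)$, since $u^{-2} = t/w^2$ and $w$ is fixed. One subtlety worth a sentence: to be safe about whether the NIST asymptotic is literally valid on the closed sector boundary $|\mathrm{Arg}(w)| = \pi/4$ (where $\re(w^2)=0$), I can either invoke that \eqref{NIST} holds for $|\mathrm{Arg}(w)|<3\pi/4$ (strictly larger than $\pi/4$, so no issue), and note that the error term $O(t^{-N-3/2}\cdot|w|^{-2N-3})$ from \eqref{NIST} is $O(t)$ after multiplication by $|u e^{u^2}| = |w|t^{-1/2}|e^{w^2/t}|$ using $|e^{w^2/t}|=e^{\re(w^2)/t}\le 1$ on this sector.

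For part (ii), when $|\mathrm{Arg}(w)|>\pi/4$ we have $\re(u^2)<0$, so $e^{u^2}$ decays exponentially. Here I again write $1+\operatorname{erf}(u)=2-\operatorname{erfc}(u)$, so $F(t,w) = 2u e^{u^2} - u e^{u^2}\operatorname{erfc}(u)$. The first term $2ue^{u^2} = 2w t^{-1/2} e^{w^2/t}$ is $O(t^{-1/2}e^{\re(w^2)/t})$, which is exponentially small and hence absorbed into $O(t^2)$ (indeed into $O(t^k)$ for every $k$). For the second term I apply \eqref{NIST} with $N=1$, valid since $|\mathrm{Arg}(u)|>\pi/4$ but we still need $|\mathrm{Arg}(u)|<3\pi/4$ — and this is precisely the sector where the lemma lives, so I should state the hypothesis of Lemma \ref{2.2}(ii) as $\pi/4<|\mathrm{Arg}(w)|<3\pi/4$ or simply note we only use the lemma in that range. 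This gives $\operatorname{erfc}(u) = \frac{e^{-u^2}}{\sqrt\pi}\bigl(\frac1u - \frac12 u^{-3} + O(u^{-5})\bigr)$, so $-ue^{u^2}\operatorname{erfc}(u) = -\frac1{\sqrt\pi}\bigl(1 - \frac1{2u^2} + O(u^{-4})\bigr) = -\frac1{\sqrt\pi}\bigl(1 - \frac{t}{2w^2}\bigr) + O(t^2)$, using $u^{-2}=t/w^2$. Combining with the exponentially small first term yields exactly the claimed expansion.

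The main obstacle, such as it is, is bookkeeping about the sectors rather than any real analytic difficulty: one must make sure that \eqref{NIST} is applied only where $|\mathrm{Arg}(w)|<3\pi/4$, that the boundary case $|\mathrm{Arg}(w)|=\pi/4$ in part (i) is handled (it is, since $e^{w^2/t}$ is then merely bounded, not growing, and the NIST expansion is still valid there), and that the error terms coming out of \eqref{NIST}, after being multiplied by the possibly-growing factor $ue^{u^2}$ in part (i), are genuinely $O(t)$ — which works because on $|\mathrm{Arg}(w)|\le\pi/4$ the factor $|e^{w^2/t}|$ is at most $1$. Everything else is substitution of $u=w/\sqrt t$ and collecting powers of $t$.
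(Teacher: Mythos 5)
Your part (i) is correct and is exactly the paper's argument: write $1+\operatorname{erf}(u)=2-\operatorname{erfc}(u)$ with $u=w/\sqrt t$ and feed in \eqref{NIST}, which applies since $|\Arg(u)|\le\pi/4<3\pi/4$. (One aside there is wrong, though not load-bearing: on $|\Arg(w)|\le\pi/4$ one has $\re(w^2)\ge 0$, so $|e^{w^2/t}|\ge 1$, not $\le 1$; the error term must be read as the \emph{relative} error inside the factor $e^{-u^2}/\sqrt\pi$, which is how your main computation in fact treats it.)

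Part (ii) has a genuine gap. The lemma is asserted for all $w$ with $|\Arg(w)|>\pi/4$, but your decomposition $F(t,w)=2ue^{u^2}-ue^{u^2}\operatorname{erfc}(u)$ only works on the strip $\pi/4<|\Arg(w)|<3\pi/4$: for $|\Arg(w)|\ge 3\pi/4$ one has $\re(w^2)\ge 0$ again, so $2ue^{u^2}$ is \emph{not} exponentially small (take $w<0$ real: $2ue^{u^2}\to-\infty$ while $\operatorname{erfc}(u)\to 2$, and the two huge terms must cancel), and moreover \eqref{NIST} is not applicable to $\operatorname{erfc}(u)$ there. Your suggested remedy --- restating the hypothesis as $\pi/4<|\Arg(w)|<3\pi/4$ --- proves a weaker statement than the one asserted. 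The paper's route avoids this entirely: write $1+\operatorname{erf}(u)=\operatorname{erfc}(-u)$ and observe that $|\Arg(w)|>\pi/4$ is \emph{equivalent} to $|\Arg(-w)|<3\pi/4$, so \eqref{NIST} applies to $\operatorname{erfc}(-u)$ on the whole sector of part (ii); then $F(t,w)=ue^{u^2}\operatorname{erfc}(-u)=-\frac{1}{\sqrt\pi}\left(1-\frac{1}{2u^2}\right)+O\left(u^{-4}\right)$ with no separate exponentially small term to dispose of. Replacing your decomposition by this one repairs the proof with no other changes.
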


\begin{proof} 
\ \\ (i) If $\lvert \mathrm{Arg}(w) \rvert \leq \pi /4$, 
then the claim follows directly from \eqref{NIST}. 
 
\noindent (ii) If $\lvert \mathrm{Arg}(w) \rvert > \pi /4$, then $\lvert \mathrm{Arg}(-w) \rvert < 3\pi/4$ and the claim follows similarly. 
\end{proof}

\noindent {\it Remark.}
The change in the behavior of the asymptotic expansion of $F(t,w)$ across the boundary
$|\Arg(w)|=\pi/4$ is an example of Stokes' phenomenon. {The lines $\Arg(w)=\pm \pi/4$ are called {\em anti-Stokes} lines}.
 
\subsection{Jacobi and partial theta functions} In this section, we  provide a   transformation property for  the Jacobi theta function  
\begin{equation}\label{Theta}
\Theta (z;\tau) := \sum_{n\in \Z} (-1)^n \zeta^n q^{n^2} ,
\end{equation}  and also  establish a shifting property of  the partial theta functions $F_{d,\ell}$.\\
\indent
The function $\Theta$ satisfies the following well-known modular transformation property (see \cite{Mu}, Chapter 1), which we make use of:  
\begin{equation}\label{thetatrans}
\Theta (z;\tau) = (-2i \tau)^{-\frac{1}{2}} \sum_{n  \in 1 +2\mathbb{Z}} e^{-\frac{\pi i}{8 \tau} (n+2z)^2} .
\end{equation}
\indent
 A direct calculation yields the following shifting property for the functions $F_{d,\ell}$.  
\begin{lemma}\label{lem_fshift} For $m\in\mathbb Z,$ we have that
$$
F_{d+m\ell, \ell}(z;\tau)=F_{d, \ell}(z;\tau)-\sum_{a\geq 0}\mathcal{D}_z^{2a}\left(\frac{\zeta^{d}\left(1-\zeta^{\ell m}\right)}{1-\zeta^{\ell}}\right) \frac{(2\pi i\tau)^a}{a!}.
$$
\end{lemma}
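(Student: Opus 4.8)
The plan is to work directly from the definition \eqref{def_Fdell}, splitting the sum over $n \geq 0$ according to residues or, more efficiently, by comparing the defining series for $F_{d+m\ell,\ell}$ and $F_{d,\ell}$ term by term. First I would observe that
$$
F_{d+m\ell,\ell}(z;\tau) = \sum_{n\geq 0} \zeta^{\ell n + d + \ell m} q^{(\ell n + d + \ell m)^2} = \sum_{n \geq m} \zeta^{\ell n + d} q^{(\ell n + d)^2},
$$
after the index shift $n \mapsto n - m$ (valid for $m \geq 0$; for $m < 0$ one reindexes the other way, and the resulting finite sum carries the opposite sign, which is why the difference below is an honest polynomial identity valid for all $m \in \mathbb{Z}$). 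Hence
$$
F_{d,\ell}(z;\tau) - F_{d+m\ell,\ell}(z;\tau) = \sum_{n=0}^{m-1} \zeta^{\ell n + d} q^{(\ell n+d)^2},
$$
a finite sum of $m$ terms (interpreted with the usual sign convention when $m<0$).

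Next I would expand each summand in powers of $\tau$. Since $q^{(\ell n + d)^2} = e^{2\pi i \tau (\ell n+d)^2}$, the natural move is to write $\zeta^{\ell n + d} q^{(\ell n+d)^2}$ as the result of applying a differential operator in $z$ to $\zeta^{\ell n + d}$: indeed $\mathcal{D}_z^{2a}\zeta^{\ell n + d} = (\ell n + d)^{2a}\zeta^{\ell n+d}$, so
$$
\zeta^{\ell n + d} q^{(\ell n+d)^2} = \sum_{a \geq 0} \frac{(2\pi i \tau)^a}{a!} (\ell n + d)^{2a} \zeta^{\ell n + d} = \sum_{a\geq 0}\frac{(2\pi i\tau)^a}{a!}\,\mathcal{D}_z^{2a}\zeta^{\ell n + d}.
$$
Summing this over $n = 0, \dots, m-1$ and interchanging the (finite inner, formal outer) sums gives
$$
F_{d,\ell}(z;\tau) - F_{d+m\ell,\ell}(z;\tau) = \sum_{a\geq 0}\frac{(2\pi i \tau)^a}{a!}\,\mathcal{D}_z^{2a}\!\left(\sum_{n=0}^{m-1}\zeta^{\ell n + d}\right).
$$
Finally I would evaluate the geometric sum $\sum_{n=0}^{m-1}\zeta^{\ell n} = \frac{1-\zeta^{\ell m}}{1-\zeta^{\ell}}$, so that the inner expression becomes $\mathcal{D}_z^{2a}\big(\zeta^d(1-\zeta^{\ell m})/(1-\zeta^\ell)\big)$, which is exactly the claimed formula after moving the term to the other side.

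The only genuine point requiring a word of care — and the one I would flag as the "main obstacle," though it is minor — is the handling of $m < 0$ and the convergence/interchange of sums: the identity is stated as an equality of formal power series in $\tau$ (equivalently, of holomorphic functions near $\tau \to 0$ along the imaginary axis, which is how it is used in Theorem \ref{Fasex}), so one should either note that for each fixed $z$ with $|\zeta^\ell|\neq 1$ all series converge absolutely for $\tau \in \mathbb{H}$, or simply treat the statement as an identity of formal power series in $q$ after expanding, where the rearrangement is automatic. The geometric-sum evaluation $\sum_{n=0}^{m-1}\zeta^{\ell n} = (1-\zeta^{\ell m})/(1-\zeta^\ell)$ also extends to negative $m$ by the usual telescoping, which is what makes the single formula uniform in $m \in \mathbb{Z}$.
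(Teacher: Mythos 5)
Your proof is correct and is precisely the ``direct calculation'' the paper alludes to (the paper gives no details): reindex to identify the difference as the finite sum $\sum_{n=0}^{m-1}\zeta^{\ell n+d}q^{(\ell n+d)^2}$, expand the $q$-exponentials, pull out $\mathcal{D}_z^{2a}$, and evaluate the geometric sum. Your cautionary remarks about $m<0$ and the interchange of sums are fine, though since the sum over $n$ is finite the interchange needs no restriction on $z$ at all, and the apparent singularity of $\zeta^d(1-\zeta^{\ell m})/(1-\zeta^\ell)$ at $\zeta^\ell=1$ is removable because it equals $\zeta^d\sum_{n=0}^{m-1}\zeta^{\ell n}$.
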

  
In Section 3, we require  the  expansions of certain derivative functions similar to those appearing above.  Using the Bernoulli polynomial generating function (see 24.2.3 in \cite{NIST}), we establish the following lemma. 
\begin{lemma}\label{derivative}
For $z\in\mathbb{C}\setminus \{0\},$ and $a\in\mathbb{N}_0$, we have that
\begin{align}\label{eqn_dber}
\mathcal{D}_z^{2a}\left(\frac{\zeta^d}{1-\zeta^\ell}\right)=-\ell^{2a}\left(\frac{(2a)!}{\left(2\pi i\ell z\right)^{2a+1}}+\sum_{b\geq 0}\frac{\left(2\pi i\ell z\right)^{b}B_{2a+b+1}\left(\frac{d}{\ell}\right)}{b!(2a+b+1)}\right).
\end{align}
Moreover, for $a\in \mathbb N_0$, we have that
\begin{align}\label{eqn_dber0}\lim_{z\to 0}\left(\mathcal{D}_z^{2a}\left(\frac{\zeta^d}{1-\zeta^\ell}\right)+\frac{(2a)!}{\ell \left(2\pi i z\right)^{2a+1}}\right)= -\ell^{2a}\frac{B_{2a+1}\left(\frac{d}{\ell}\right)}{ 2a+1}.\end{align}
\end{lemma}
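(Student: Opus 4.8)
The plan is to obtain both formulas from the standard generating function for Bernoulli polynomials, namely $\frac{te^{xt}}{e^t-1}=\sum_{k\ge 0}B_k(x)\frac{t^k}{k!}$ (reference 24.2.3 in \cite{NIST}). First I would rewrite $\frac{\zeta^d}{1-\zeta^\ell}$ in terms of $w:=2\pi i z$. Since $\zeta^d=e^{dw}$ and $\zeta^\ell=e^{\ell w}$, we have
\[
\frac{\zeta^d}{1-\zeta^\ell}=-\frac{e^{dw}}{e^{\ell w}-1}=-\frac{1}{\ell w}\cdot\frac{\ell w\, e^{\ell w\cdot(d/\ell)}}{e^{\ell w}-1}=-\frac{1}{\ell w}\sum_{k\ge 0}B_k\!\left(\tfrac{d}{\ell}\right)\frac{(\ell w)^k}{k!},
\]
valid for $0<|w|$ small. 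Splitting off the $k=0$ term $B_0=1$ isolates the pole $-\frac{1}{\ell w}$, so
\[
\frac{\zeta^d}{1-\zeta^\ell}=-\frac{1}{\ell w}-\sum_{k\ge 1}B_k\!\left(\tfrac{d}{\ell}\right)\frac{\ell^{k-1}w^{k-1}}{k!}.
\]

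Next I would apply $\mathcal D_z^{2a}=\left(\frac{1}{2\pi i}\frac{\partial}{\partial z}\right)^{2a}=\left(\frac{\partial}{\partial w}\right)^{2a}$ termwise (justified since the series converges locally uniformly on a punctured neighborhood of $0$, and differentiation of the Laurent/Taylor pieces is legitimate there). Differentiating the polar part, $\left(\frac{\partial}{\partial w}\right)^{2a}\left(-\frac{1}{\ell w}\right)=-\frac{(2a)!}{\ell w^{2a+1}}=-\frac{\ell^{2a}(2a)!}{(2\pi i\ell z)^{2a+1}}$ after restoring $w=2\pi i z$ and pulling out $\ell^{2a}$; for the regular part I would reindex: writing the sum over $k\ge 1$ as a sum over $b=k-1\ge 0$ gives $-\sum_{b\ge 0}B_{b+1}(\frac{d}{\ell})\frac{\ell^b w^b}{(b+1)!}$, and applying $\partial_w^{2a}$ shifts $b\mapsto b$ but only the terms with $b\ge 2a$ survive; after shifting the summation index by $2a$ and simplifying the factorial $\frac{(b+2a)!}{b!}\cdot\frac{1}{(b+2a+1)!}=\frac{1}{b!(b+2a+1)}$, one lands exactly on $-\ell^{2a}\sum_{b\ge 0}\frac{(2\pi i\ell z)^b B_{2a+b+1}(d/\ell)}{b!(2a+b+1)}$. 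Combining the two pieces yields \eqref{eqn_dber}. Then \eqref{eqn_dber0} is immediate: adding $\frac{(2a)!}{\ell(2\pi i z)^{2a+1}}$ cancels the polar term, and letting $z\to 0$ kills every $b\ge 1$ term in the remaining power series, leaving only the $b=0$ contribution $-\ell^{2a}\frac{B_{2a+1}(d/\ell)}{2a+1}$.

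The only genuinely delicate point is the bookkeeping of the factorial shift under repeated differentiation and the re-indexing of the summation, together with making sure the termwise differentiation is legitimate — but this is routine since everything happens inside the radius of convergence of the Bernoulli generating function (i.e. $0<|2\pi i\ell z|<2\pi$), where the series converges absolutely and locally uniformly on the punctured disk. I expect this index manipulation to be the main (minor) obstacle; there is no conceptual difficulty.
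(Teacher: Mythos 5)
Your proposal is correct and follows exactly the route the paper intends: the paper gives no written proof of this lemma beyond the remark that it follows from the Bernoulli polynomial generating function (24.2.3 in \cite{NIST}), and your expansion of $\frac{\zeta^d}{1-\zeta^\ell}$ via $w=2\pi i z$, termwise differentiation, and reindexing is precisely that argument carried out in full. Your observation that the series identity lives inside $0<|2\pi i\ell z|<2\pi$ is also the right (and only) convergence caveat.
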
 

\section{ Asymptotic expansions of $F_{d,\ell}$ and $G_{d,\ell}$ if  $\im(z)\neq 0$ } \label{sec_sec3} In this section, we establish the asymptotic expansions of the partial theta functions  $F_{d,\ell},$ and of  the functions  $G_{d,\ell},$ if $\im(z) \neq 0$, making use of some of the results established in Section \ref{sec_aux}.   In Section \ref{sec_31}, we consider  the case $\im(z) >0$, and in Section \ref{sec_32}, we treat the case $\im(z)<0$. Recall that we  write $z=(z_0+j)/\ell,$ where $j\in\mathbb Z$, $z_0=x_0+iy_0,$ with $x_0, y_0 \in \mathbb R$ and $-1/2<x_0\leq 1/2$.
\subsection{$\im(z)> 0$}\label{sec_31}  In Lemma \ref{3.1} below, we establish the asymptotic expansions of the functions $F_{d,\ell}$ and  $G_{d,\ell}$ in the case $\im(z)>0$.
\begin{lemma}\label{3.1}
We have, for $\tau \in \mathbb H$  and $z\in\mathbb{C}$ with $\im(z)> 0$, 
\begin{align*}
F_{d,\ell}(z;\tau)&=\sum_{a\geq0} \mathcal{D}_z^{2a}\left(\frac{\zeta^d}{1-\zeta^\ell}\right)\frac{\left(2\pi i\tau\right)^a}{a!},\\
G_{d,\ell}(z;\tau)&=2i\sum_{a\geq 0}\mathcal{D}_z^{2a}\left(\frac{\sin\left(2\pi dz\right)}{1-\zeta^\ell}\right)\frac{\left(2\pi i\tau\right)^a}{a!}.
\end{align*}
\end{lemma}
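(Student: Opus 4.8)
\textbf{Proof proposal for Lemma \ref{3.1}.}

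The plan is to derive both identities from a single source: an absolutely convergent geometric-type rearrangement of the defining series for $F_{d,\ell}$, valid precisely because $\im(z)>0$ forces $|\zeta|<1$. First I would observe that, writing $\zeta = e^{2\pi i z}$ with $\im(z)>0$, the factor $q^{(\ell n + d)^2}$ can be Taylor-expanded: since $q^{(\ell n+d)^2} = e^{2\pi i \tau (\ell n+d)^2} = \sum_{a\geq 0} \frac{(2\pi i \tau)^a}{a!} (\ell n + d)^{2a}$, and since $|\zeta|<1$ makes $\sum_{n\geq 0} n^{2a} |\zeta|^{\ell n + d}$ converge for every $a$, I can interchange the sum over $n$ with the sum over $a$ (justifying this by a standard dominated-convergence / Fubini argument on the double series, using that $\tau\in\mathbb H$ and $\im(z)>0$). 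This yields
\[
F_{d,\ell}(z;\tau) = \sum_{a\geq 0} \frac{(2\pi i\tau)^a}{a!} \sum_{n\geq 0} (\ell n + d)^{2a}\, \zeta^{\ell n + d}.
\]
The key step is then to recognize the inner sum as a derivative. Since $\mathcal D_z = \frac{1}{2\pi i}\frac{\partial}{\partial z}$ acts on $\zeta^{c} = e^{2\pi i c z}$ by multiplication by $c$, we have $\mathcal D_z^{2a}\left(\sum_{n\geq 0}\zeta^{\ell n+d}\right) = \sum_{n\geq 0}(\ell n + d)^{2a}\zeta^{\ell n+d}$, and for $|\zeta|<1$ the bare geometric series evaluates to $\sum_{n\geq 0}\zeta^{\ell n + d} = \frac{\zeta^d}{1-\zeta^\ell}$. (Term-by-term differentiation of the series is legitimate on the open set $\im(z)>0$ since it converges locally uniformly there.) Substituting gives the first formula.

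For the second identity I would simply apply the definition $G_{d,\ell}(z;\tau) = F_{d,\ell}(z;\tau) - F_{-d,\ell}(z;\tau)$ and use the first formula for each term. The point is that $\im(z)>0$ is the hypothesis for \emph{both} $F_{d,\ell}$ and $F_{-d,\ell}$ regardless of the sign of $d$, so the first formula applies verbatim to $F_{-d,\ell}(z;\tau)$ with $d$ replaced by $-d$. Thus
\[
G_{d,\ell}(z;\tau) = \sum_{a\geq 0}\frac{(2\pi i\tau)^a}{a!}\, \mathcal D_z^{2a}\!\left(\frac{\zeta^d - \zeta^{-d}}{1-\zeta^\ell}\right),
\]
and since $\zeta^d - \zeta^{-d} = e^{2\pi i d z} - e^{-2\pi i dz} = 2i\sin(2\pi d z)$, linearity of $\mathcal D_z^{2a}$ pulls the constant $2i$ out, producing the stated expression.

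The main (really, the only) obstacle is the justification of the interchange of summation order in the first step — everything else is formal manipulation of convergent series. I would handle it by noting that the relevant double series $\sum_{a\geq 0}\sum_{n\geq 0}\frac{|2\pi\tau|^a}{a!}(\ell n+d)^{2a}|\zeta|^{\ell n+d}$ converges: summing over $a$ first gives $\sum_{n\geq 0} e^{2\pi|\tau|(\ell n+d)^2}|\zeta|^{\ell n+d}$, wait — that diverges, so instead I should be slightly more careful and observe that one does not need the full double series to converge absolutely; rather, it suffices that for each fixed $a$ the inner sum converges (which it does, being dominated by a convergent power series in $|\zeta|$) and that the resulting series in $a$ has positive radius of convergence in $\tau$. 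The cleanest route is: for $|\zeta|<1$ the function $\tau\mapsto F_{d,\ell}(z;\tau)$ is given by a series converging for all $\tau$, it is entire in $\tau$, hence equals its Taylor series at $\tau=0$; computing the $a$-th Taylor coefficient by differentiating $F_{d,\ell}$ in $\tau$ term-by-term (permissible since the series of $\tau$-derivatives still converges locally uniformly when $|\zeta|<1$) gives exactly $\frac{(2\pi i)^a}{a!}\sum_{n\geq 0}(\ell n+d)^{2a}\zeta^{\ell n+d} = \frac{(2\pi i)^a}{a!}\mathcal D_z^{2a}\!\left(\frac{\zeta^d}{1-\zeta^\ell}\right)$. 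This identifies the Taylor expansion with the claimed series and completes the argument.
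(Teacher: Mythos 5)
Your formal computation is exactly the paper's proof: expand $q^{(\ell n+d)^2}=e^{2\pi i\tau(\ell n+d)^2}$ in powers of $\tau$, interchange the sums, recognize $\sum_{n\geq 0}(\ell n+d)^{2a}\zeta^{\ell n+d}$ as $\mathcal{D}_z^{2a}$ applied to the geometric series $\zeta^d/(1-\zeta^\ell)$ (valid since $\im(z)>0$ gives $|\zeta|<1$), and obtain the $G_{d,\ell}$ identity by differencing and writing $\zeta^d-\zeta^{-d}=2i\sin(2\pi dz)$. The paper gives no more justification than this one-line manipulation, so on the core argument you and the paper coincide.

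The issue is your closing paragraph. Your first instinct there --- that the double series is not absolutely summable --- is correct, but the repair you propose is wrong. The map $\tau\mapsto F_{d,\ell}(z;\tau)$ is \emph{not} entire: for $\im(\tau)<0$ the factor $\lvert q^{(\ell n+d)^2}\rvert=e^{2\pi\lvert\im(\tau)\rvert(\ell n+d)^2}$ grows superexponentially in $n$ and overwhelms the geometric decay of $\lvert\zeta^{\ell n+d}\rvert$, so the defining series only converges for $\im(\tau)\geq 0$ and $\tau=0$ is a boundary point, not a point at which one can Taylor-expand. In fact the right-hand side of the lemma \emph{diverges} for every fixed $\tau\neq 0$: with $y=\im(z)>0$ one has $\bigl\lvert\mathcal{D}_z^{2a}\bigl(\zeta^d/(1-\zeta^\ell)\bigr)\bigr\rvert\asymp (2a)!\,(2\pi y)^{-2a-1}$, and $\frac{(2a)!}{a!}x^a\to\infty$ for any $x\neq 0$. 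So the displayed identities cannot hold as identities of convergent series; they are asymptotic (formal power series) statements, which is exactly how they are used downstream --- every invocation in the paper truncates at $a=N$ with an $O(t^{N+1})$ error. The clean rigorous version is the truncated one: writing $w=2\pi i\tau m^2$ with $m=\ell n+d$, the integral form of the Taylor remainder gives $\bigl\lvert e^{w}-\sum_{a=0}^{N}\frac{w^a}{a!}\bigr\rvert\leq\frac{\lvert w\rvert^{N+1}}{(N+1)!}$ because $\re(w)=-2\pi\im(\tau)m^2\leq 0$; multiplying by $\lvert\zeta\rvert^{m}$ and summing over $n$ then bounds the total error by $O(\lvert\tau\rvert^{N+1})$, yielding Theorem \ref{Fasex}(i) directly without ever asserting convergence of the infinite sum over $a$.
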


\begin{proof} 
Using the definition of the functions $F_{d,\ell}$, we have
 \begin{align}\label{eqn_fexpzplus}
 F_{d,\ell} (z;\tau) =\sum_{a\geq 0} \frac{(2\pi i\tau)^a}{a!}  \sum_{n\geq 0} (\ell n+d)^{2a} \zeta^{\ell n+d}  = 
 \sum_{a\geq 0}  \mathcal{D}_z^{2a} \left(\frac{\zeta^d}{1-\zeta^\ell} \right)\frac{(2\pi i\tau)^a}{a!}.
  \end{align}
This establishes the claimed result for the functions $F_{d,\ell}$.  The statement  for $G_{d,\ell}$ then follows from (\ref{eqn_fexpzplus}) and the definition of the functions $G_{d,\ell}$.
\end{proof}

\indent
From Lemma \ref{3.1} we deduce the following asymptotic behavior of the functions $F_{d,\ell}$ and  $G_{d,\ell}$.
\begin{corollary}\label{cor_fgasypos}
For $\im(z)> 0$, as $t\to 0^+, $ we have that
\begin{equation*}
F_{d,\ell}(z;it)\sim \frac{\zeta^d}{1-\zeta^\ell},\qquad\qquad
G_{d,\ell}(z;it)\sim \frac{2i\sin\left(2\pi dz\right)}{1-\zeta^\ell}.
\end{equation*}
\end{corollary}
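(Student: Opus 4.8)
\textbf{Proof plan for Corollary \ref{cor_fgasypos}.}
The plan is to simply extract the leading-order behavior from the exact convergent expansions provided by Lemma \ref{3.1}. Specializing $\tau = it$ in Lemma \ref{3.1}, we have the identity
\[
F_{d,\ell}(z;it) = \sum_{a\geq 0} \mathcal{D}_z^{2a}\left(\frac{\zeta^d}{1-\zeta^\ell}\right)\frac{(-2\pi t)^a}{a!},
\]
valid for every $z$ with $\im(z)>0$. The $a=0$ term is exactly $\zeta^d/(1-\zeta^\ell)$, which is a fixed nonzero complex number (nonzero because $\im(z)>0$ forces $|\zeta^\ell|\neq 1$, so $1-\zeta^\ell\neq 0$, and $\zeta^d\neq 0$ always). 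So the only thing to check is that the tail $\sum_{a\geq 1}\mathcal{D}_z^{2a}(\zeta^d/(1-\zeta^\ell))(-2\pi t)^a/a!$ tends to $0$ as $t\to 0^+$; granting that, $F_{d,\ell}(z;it)\to \zeta^d/(1-\zeta^\ell)$, which is precisely the asserted asymptotic equivalence $F_{d,\ell}(z;it)\sim \zeta^d/(1-\zeta^\ell)$. The identical argument with $G_{d,\ell}$ in place of $F_{d,\ell}$, using the second formula of Lemma \ref{3.1} and noting that the $a=0$ term there is $2i\sin(2\pi dz)/(1-\zeta^\ell)$, gives the second claim.

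The one genuine point to justify is the vanishing of the tail, i.e.\ that the series in Lemma \ref{3.1} is continuous in $t$ at $t=0$ (uniformly near a fixed $z$). First I would go back to the proof of Lemma \ref{3.1}: there the expansion came from interchanging $\sum_{n\geq 0}$ and $\sum_{a\geq 0}$ in $F_{d,\ell}(z;\tau)=\sum_{n\geq 0}\zeta^{\ell n+d}q^{(\ell n+d)^2}$, and the interchange is legitimate precisely because, with $\im(z)=:\delta>0$ fixed, one has $|\zeta^{\ell n+d}| = e^{-2\pi\delta(\ell n+d)}$ decaying geometrically in $n$, and the double sum $\sum_{n,a} (\ell n+d)^{2a}e^{-2\pi\delta(\ell n+d)}(2\pi t)^a/a!$ converges absolutely for every $t\geq 0$ (it equals $\sum_{n\geq 0} e^{-2\pi\delta(\ell n+d)}e^{2\pi t(\ell n+d)^2}$, which is a convergent sum of continuous functions of $t$ on a neighborhood of $0$, dominated near $t=0$ by a convergent geometric-type series). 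Hence $F_{d,\ell}(z;it)$, viewed through this representation, is continuous in $t$ at $t=0$, and its value there is the $n$-sum of $e^{-2\pi\delta(\ell n+d)}$, which by the $a=0$ term of the rearranged series equals $\zeta^d/(1-\zeta^\ell)$. Alternatively — and perhaps more cleanly for the write-up — I would just invoke Theorem \ref{Fasex}(i) with $N=0$, which states $F_{d,\ell}(z;it) = \zeta^d/(1-\zeta^\ell) + O(t)$ in exactly the regime $\im(z)>0$; but since the corollary is placed before Theorem \ref{Fasex} and is meant to feed into it, the self-contained continuity argument above is the appropriate route.

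I do not expect any real obstacle here: the corollary is a direct reading-off of the constant term from an already-established exact expansion, and the only care needed is the standard dominated-convergence / absolute-convergence bookkeeping that legitimizes passing to the limit $t\to 0^+$ term by term. The $G_{d,\ell}$ case needs, in addition, the trivial observation that $F_{d,\ell}(z;it)-F_{-d,\ell}(z;it)\to \zeta^d/(1-\zeta^\ell) - \zeta^{-d}/(1-\zeta^\ell) = 2i\sin(2\pi dz)/(1-\zeta^\ell)$, using $\zeta^d-\zeta^{-d}=2i\sin(2\pi dz)$, so nothing new is required.

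\begin{proof}
This follows immediately from Lemma \ref{3.1}. Setting $\tau = it$ in the first formula there and isolating the $a=0$ summand,
\[
F_{d,\ell}(z;it) = \frac{\zeta^d}{1-\zeta^\ell} + \sum_{a\geq 1} \mathcal{D}_z^{2a}\left(\frac{\zeta^d}{1-\zeta^\ell}\right)\frac{(-2\pi t)^a}{a!}.
\]
For fixed $z$ with $\im(z)>0$, the series $\sum_{n\geq 0}(\ell n+d)^{2a}\zeta^{\ell n + d}$ converges absolutely and the double series $\sum_{n\geq 0}\sum_{a\geq 0}(\ell n+d)^{2a}|\zeta^{\ell n+d}|(2\pi t)^a/a! = \sum_{n\geq 0} e^{-2\pi \im(z)(\ell n+d)}e^{2\pi t(\ell n+d)^2}$ converges for all sufficiently small $t\geq 0$ and is continuous in $t$ at $t=0$; hence the tail above tends to $0$ as $t\to 0^+$, giving $F_{d,\ell}(z;it)\sim \zeta^d/(1-\zeta^\ell)$, and we note $1-\zeta^\ell\neq 0$ since $\im(z)>0$. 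For the second claim, subtract the expansion for $F_{-d,\ell}$ from that for $F_{d,\ell}$, or equivalently use the second formula of Lemma \ref{3.1} directly: its $a=0$ term is $2i\sin(2\pi dz)/(1-\zeta^\ell)$, and the same convergence argument shows the remaining terms are $O(t)$, whence $G_{d,\ell}(z;it)\sim 2i\sin(2\pi dz)/(1-\zeta^\ell)$.
\end{proof}
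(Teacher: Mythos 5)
Your overall route is the same as the paper's: the corollary is read off from Lemma \ref{3.1} by isolating the $a=0$ term, and the paper supplies no further justification beyond ``From Lemma \ref{3.1} we deduce\ldots''. The trouble is with the one step you yourself singled out as the genuine point to justify. You claim that the absolute double series
\[
\sum_{n\geq 0}\sum_{a\geq 0}\frac{(\ell n+d)^{2a}\left|\zeta^{\ell n+d}\right|(2\pi t)^a}{a!}=\sum_{n\geq 0}e^{-2\pi \im(z)(\ell n+d)}\,e^{2\pi t(\ell n+d)^2}
\]
converges for all sufficiently small $t\geq 0$. It does not: for every fixed $t>0$ the exponent $2\pi t(\ell n+d)^2-2\pi \im(z)(\ell n+d)$ tends to $+\infty$ with $n$, so the terms blow up and the series diverges. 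The sign flip is the whole point: $\bigl|q^{(\ell n+d)^2}\bigr|=e^{-2\pi t(\ell n+d)^2}$ is small, but once you replace each term of the Taylor expansion of $q^{(\ell n+d)^2}$ in $\tau$ by its absolute value you get $e^{+2\pi t(\ell n+d)^2}$. So absolute convergence/Fubini does not legitimize the interchange for $t>0$, and the continuity-at-$t=0$ argument built on it collapses. (The same computation shows that $\mathcal{D}_z^{2a}\bigl(\zeta^d/(1-\zeta^\ell)\bigr)$ grows like $(2a)!$ times a geometric factor, so the infinite series in Lemma \ref{3.1} is really an asymptotic expansion in the sense of Theorem \ref{Fasex}(i) rather than a convergent identity --- a point on which the paper itself is cavalier.)

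The fix is short and bypasses the rearrangement entirely: work with the original $n$-sum. For fixed $z$ with $\im(z)>0$ one has $\bigl|\zeta^{\ell n+d}q^{(\ell n+d)^2}\bigr|\leq e^{-2\pi \im(z)(\ell n+d)}$ uniformly in $t\geq 0$, since $\bigl|q^{(\ell n+d)^2}\bigr|\leq 1$, and each term tends to $\zeta^{\ell n+d}$ as $t\to 0^+$; dominated convergence gives $F_{d,\ell}(z;it)\to\sum_{n\geq 0}\zeta^{\ell n+d}=\zeta^d/(1-\zeta^\ell)$, which is a nonzero constant, and this is the asserted asymptotic. The $G_{d,\ell}$ statement then follows by subtraction exactly as you wrote, using $\zeta^{d}-\zeta^{-d}=2i\sin(2\pi dz)$ (with the understanding that for $d=0$ both sides vanish identically). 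With that replacement your argument is correct and matches the paper's intent.
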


\subsection{$\im(z)< 0$}\label{sec_32}  In Lemma \ref{3.3}, we establish the asymptotic expansions of the  functions $F_{d,\ell}$ and  $G_{d,\ell}$ if $\im(z)<0$. 
\begin{lemma}\label{3.3}
For $ z\in\mathbb{C} $  with  $\im(z)<0$, and $N\in\mathbb N_0$, as $t\to 0^+$, we have  that
\begin{align}\label{eqn_Fdfirstsum}
F_{d,\ell}(z;it) &=\left(2\ell^2 t\right)^{-\frac12}e^{\frac{2\pi ijd}{\ell}-\frac{\pi z_0^2}{2\ell^2 t}}\sum_{\substack{n\in\mathbb{Z}\\|n+x_0|\le|y_0|}}e^{-\frac{\pi n^2}{2\ell^2 t}-\frac{\pi z_0 n}{\ell^2 t}-\frac{2\pi ind}{\ell}}
\\ \nonumber &{\hspace{.3in}}+ 
  \sum_{a= 0}^{N} \mathcal{D}_z^{2a}\left(\frac{\zeta^d}{1-\zeta^\ell}\right)\frac{\left(-2\pi t\right)^a}{a!} +O\left(t^{N+1}\right),\\
\label{eqn_Gdfirstsum}
G_{d,\ell}(z;it) &= 2i \left(2\ell^2 t\right)^{-\frac12}e^{-\frac{\pi z_0^2}{2\ell^2 t}}\sum_{n\in\Z\atop |n+x_0|\leq |y_0|}\sin\left(\frac{2\pi d}{\ell}(j-n)\right)e^{-\frac{\pi n^2}{2\ell^2 t}-\frac{\pi z_0 n}{\ell^2 t}}
\\ \nonumber & {\hspace{.3in}} + 
 2i\sum_{a= 0}^N\mathcal{D}_z^{2a}\left(\frac{\sin\left(2\pi dz\right)}{1-\zeta^\ell}\right)\frac{\left(-2\pi t\right)^a }{a!}+ O\left(t^{N+1}\right).
\end{align}
\end{lemma}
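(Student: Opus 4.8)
The plan is to start from the definition $F_{d,\ell}(z;\tau)=\sum_{n\ge 0}\zeta^{\ell n+d}q^{(\ell n+d)^2}$ and complete the partial sum to a full-lattice theta sum, writing
\[
F_{d,\ell}(z;it)=\sum_{n\in\Z}\zeta^{\ell n+d}e^{-2\pi t(\ell n+d)^2}-\sum_{n< 0}\zeta^{\ell n+d}e^{-2\pi t(\ell n+d)^2}.
\]
The first sum is (after the change of variables $z\mapsto$ appropriate argument and $\tau\mapsto it$) a genuine Jacobi theta function, so the modular transformation \eqref{thetatrans} — or rather its unsigned analogue, which follows from \eqref{thetatrans} by a shift of $z$ — converts it into a rapidly convergent sum over a shifted lattice with Gaussian weight $e^{-\frac{\pi(\cdots)^2}{2\ell^2 t}}$. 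The second (tail) sum I would re-index by $n\mapsto -n-1$ to turn it into a partial theta function of the same shape as $F$ but with the roles of the exponent sign reversed; expanding $e^{-2\pi t(\cdots)^2}$ as a power series in $t$ and summing the resulting geometric-type series in $n$ produces exactly the $\sum_a \mathcal D_z^{2a}(\zeta^d/(1-\zeta^\ell))\frac{(-2\pi t)^a}{a!}$ contribution, mirroring the computation in \eqref{eqn_fexpzplus} of Lemma \ref{3.1}.

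Next I would track which terms in the transformed theta sum are genuinely exponentially large (hence must be kept explicitly) and which are exponentially small (hence absorbed into the $O(t^{N+1})$ error). Writing $z=(z_0+j)/\ell$ with $z_0=x_0+iy_0$, the exponent of a generic term in the transformed sum, after completing the square, is controlled by a quantity of the form $-\frac{\pi}{2\ell^2 t}\big((n+x_0)^2-y_0^2\big)$ plus bounded terms; such a term blows up precisely when $(n+x_0)^2<y_0^2$, i.e.\ $|n+x_0|\le|y_0|$, which is exactly the summation condition in the stated formula, and it is $O(1)$ (in fact subexponential) on the boundary $|n+x_0|=|y_0|$. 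Collecting these, pulling out the common factor $\big(2\ell^2 t\big)^{-1/2}e^{\frac{2\pi ijd}{\ell}-\frac{\pi z_0^2}{2\ell^2 t}}$, and checking that the phase $e^{-\frac{2\pi ind}{\ell}}$ and the weight $e^{-\frac{\pi n^2}{2\ell^2 t}-\frac{\pi z_0 n}{\ell^2 t}}$ come out correctly gives the first line of \eqref{eqn_Fdfirstsum}; the remaining infinitely many lattice terms are genuinely $O(t^{N+1})$ for every $N$ because their exponents have strictly negative real part bounded away from $0$. Finally, the statement for $G_{d,\ell}=F_{d,\ell}-F_{-d,\ell}$ follows by subtracting the two instances of \eqref{eqn_Fdfirstsum}: in the theta-sum part the two exponential factors share the common $e^{-\frac{\pi z_0^2}{2\ell^2 t}}\cdot(2\ell^2 t)^{-1/2}$ prefactor and combine via $e^{i\theta}-e^{-i\theta}=2i\sin\theta$ (using $F_{-d,\ell}$'s shift $j$-phase $e^{-2\pi ijd/\ell}$ and the reindexing $n\mapsto -n$ to line up the Gaussian weights), yielding the $2i\sin\!\big(\frac{2\pi d}{\ell}(j-n)\big)$ factor, while in the power-series part one uses $\mathcal D_z^{2a}$ linearity and $\zeta^d-\zeta^{-d}=2i\sin(2\pi dz)$.

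The main obstacle I anticipate is the bookkeeping around the anti-Stokes regime: one must be careful that the "boundary" terms with $|n+x_0|=|y_0|$ are handled consistently (they are polynomially rather than exponentially large relative to the prefactor, so whether they sit inside the displayed sum or in the error matters), and more importantly that the claim "all other lattice terms are $O(t^{N+1})$" is justified uniformly — this requires a quantitative lower bound on $\min_{|n+x_0|>|y_0|}\big((n+x_0)^2-y_0^2\big)$, which is a positive constant depending only on $x_0,y_0$, so that $e^{-\frac{\pi c}{2\ell^2 t}}=O(t^{N+1})$ for all $N$. A secondary technical point is justifying the interchange of summation and the Taylor expansion in the tail sum and controlling its remainder, which is exactly the kind of estimate presumably packaged in the auxiliary lemmas of Section \ref{sec_aux} (in particular the error-function asymptotics of Lemma \ref{2.2}, which will enter when the transformed theta sum is written in terms of $\operatorname{erf}$ of an argument $\sim\frac{w}{\sqrt t}$ rather than split by hand); invoking those lemmas should reduce this to routine verification.
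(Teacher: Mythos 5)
Your proposal follows essentially the same route as the paper: the paper writes $F_{d,\ell}(z;\tau)=\mathcal{M}_{d,\ell}(z;\tau)-F_{\ell-d,\ell}(-z;\tau)$ (your completion to the full lattice plus reindexed tail), handles the tail via the Lemma \ref{3.1} expansion valid for $\im(-z)>0$, applies the modular transformation \eqref{thetatrans} to $\mathcal{M}_{d,\ell}$, and then discards the lattice terms with $|n+x_0|>|y_0|$ as exponentially small, with $G_{d,\ell}$ obtained by subtraction exactly as you describe. Your additional remarks on the boundary terms and the quantitative lower bound for the discarded exponents are correct refinements of what the paper leaves implicit.
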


\begin{proof}  
  With $$\mathcal{M}_{d, \ell}(z;\tau) :=  \sum_{n\in \mathbb{Z}} \zeta^{\ell n+d} q^{(\ell n+d)^2}, $$  we have  that  
\begin{equation}\label{eqn_Fdsplit}
 F_{d,\ell} (z;\tau) = \mathcal{M}_{d, \ell}(z;\tau) - F_{\ell-d,\ell}(-z;\tau). 
\end{equation}
We analyze the two functions in (\ref{eqn_Fdsplit}) separately.

  Since $\im(-z)>0$,  Lemma 3.1 yields that 
\begin{align}\label{eqn_Fminusd}
 -F_{\ell-d,\ell} (-z;\tau) = \sum_{a\geq 0}  \mathcal{D}_z^{2a} \left(\frac{\zeta^{d}}{1-\zeta^{\ell}} \right)\frac{(2\pi i \tau)^a}{a!}.
\end{align}

Next, we write the function $\mathcal{M}_{d,\ell}$ in terms of the theta function in \eqref{Theta} and use \eqref{thetatrans} to obtain 

\begin{align}
 \label{eqn_Masymp} 
 \mathcal{M}_{d,\ell }(z;it) =\left(2\ell ^2t\right)^{-\frac{1}{2}} e^{\frac{2\pi i jd}{\ell }-\frac{\pi z_0^2}{2\ell ^2t}} \sum_{n\in \Z} e^{-\frac{\pi n^2}{2\ell ^2 t} -\frac{\pi z_0 n}{\ell ^2t} -\frac{2\pi ind}{\ell }}.
\end{align}

 Inserting  (\ref{eqn_Fminusd}) and (\ref{eqn_Masymp}) into  (\ref{eqn_Fdsplit}) we have shown that
$$
F_{d,\ell}(z;it)=\sum_{a\geq 0}\mathcal{D}_z^{2a}\left(\frac{\zeta^d}{1-\zeta^\ell}\right)\frac{\left(-2\pi t\right)^a }{a!}+\left(2\ell^2 t\right)^{-\frac12}e^{\frac{2\pi ijd}{\ell}-\frac{\pi z_0^2}{2\ell^2 t}}\sum_{n\in\Z}e^{-\frac{\pi n^2}{2\ell^2 t}-\frac{\pi z_0 n}{\ell^2 t}-\frac{2\pi ind}{\ell}}.
$$
The claimed expansion for $F_{d,\ell}$ now follows by  ignoring the exponentially small terms.\\
\indent
To prove the result for $G_{d,\ell}$, we use the result just established for $F_{d,\ell}$ together with the definition of $G_{d,\ell}$.
\end{proof}

\indent
Using Lemma \ref{3.3}, we establish the asymptotic behavior of the partial theta functions $F_{d,\ell}$  for $\im(z)<0$ in Corollary \ref{3.4} below.  Unlike the previous case in which $\im(z)>0$, a   more  careful analysis is required.
\begin{corollary}\label{3.4}
We have the following behavior for $\im(z)<0$, as $t\to 0^+$.
\begin{itemize}[leftmargin=*,align=left]
	\item[\rm{(i)}] If $|x_0|>|y_0|$, then
	$$
	F_{d,\ell}(z;it)\sim\frac{\zeta^d}{1-\zeta^\ell}.
	$$
	\item[\rm{(ii)}] If $|x_0|\leq |y_0|$ and $x_0\neq 1/2$, then
	$$
	F_{d,\ell}(z;it)\sim\left(2\ell^2 t\right)^{-\frac12}e^{\frac{2\pi ijd}{\ell}-\frac{\pi z_0^2}{2\ell^2 t}}.
	$$
	\item[\rm{(iii)}] If $x_0=1/2$ and $|y_0|\geq 1/2$, then
	$$
	F_{d,\ell}(z;it)\sim 2\left(2\ell^2 t\right)^{-\frac12}\cos\left(\frac{\pi}{\ell}\left(d+\frac{y_0}{2\ell t}\right)\right)e^{-\frac{\pi}{8\ell^2 t}+\frac{\pi y_0^2}{2\ell^2 t}+\frac{\pi i(2j+1)d}{\ell}}.
	$$
\end{itemize}
\end{corollary}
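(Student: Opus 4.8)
The plan is to read the whole corollary off the two-term expansion of Lemma~\ref{3.3}. Write $F_{d,\ell}(z;it)=T(t)+P(t)$, where $T(t):=\left(2\ell^2 t\right)^{-\frac12}e^{\frac{2\pi ijd}{\ell}-\frac{\pi z_0^2}{2\ell^2 t}}\sum_{|n+x_0|\le|y_0|}e^{-\frac{\pi n^2}{2\ell^2 t}-\frac{\pi z_0 n}{\ell^2 t}-\frac{2\pi ind}{\ell}}$ is the finite ``theta part'' and $P(t):=\sum_{a=0}^{N}\mathcal{D}_z^{2a}\left(\frac{\zeta^d}{1-\zeta^\ell}\right)\frac{(-2\pi t)^a}{a!}+O(t^{N+1})$ is the ``polynomial part''. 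The first observation is that $\im(z)<0$ forces $|\zeta|>1$, so $1-\zeta^\ell\ne 0$, $\zeta^d\ne 0$, and hence $P(t)=\frac{\zeta^d}{1-\zeta^\ell}+O(t)$ is bounded and bounded away from $0$ as $t\to 0^+$. So the corollary reduces to comparing $|T(t)|$ with $1$ and, when $T(t)$ wins, isolating its leading term.

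The second, computational, step is to record the size of the individual summands of $T(t)$. Absorbing the prefactor, the $n$-th summand equals $\left(2\ell^2 t\right)^{-\frac12}e^{\frac{2\pi ijd}{\ell}}e^{-\frac{2\pi ind}{\ell}}e^{-\frac{\pi (z_0+n)^2}{2\ell^2 t}}$, and since $\re\!\big((z_0+n)^2\big)=(x_0+n)^2-y_0^2$ its modulus is $\left(2\ell^2 t\right)^{-\frac12}\exp\!\left(\frac{\pi\big(y_0^2-(n+x_0)^2\big)}{2\ell^2 t}\right)$. Two consequences: every $n$ occurring in the sum satisfies $|n+x_0|\le|y_0|$, so its summand has modulus $\ge\left(2\ell^2 t\right)^{-\frac12}\to\infty$, whence $T(t)$, when its index set is nonempty, dominates $P(t)$; and among the occurring $n$, those with $|n+x_0|$ strictly smallest dominate all others by a factor whose modulus tends to $0$.

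From here the three cases are bookkeeping with the normalization $-\tfrac12<x_0\le\tfrac12$. (i) If $|x_0|>|y_0|$ then $|0+x_0|>|y_0|$ and, for $n\ne 0$, $|n+x_0|\ge 1-|x_0|>\tfrac12>|y_0|$; the index set is empty, so $T\equiv 0$ and $F_{d,\ell}(z;it)\to\frac{\zeta^d}{1-\zeta^\ell}$. (ii) If $|x_0|\le|y_0|$ and $x_0\ne\tfrac12$ (so $|x_0|<\tfrac12$), then $n=0$ occurs and, for $n\ne 0$, $|n+x_0|>\tfrac12>|x_0|$, so $n=0$ is the unique minimizer; thus $F_{d,\ell}(z;it)\sim\left(2\ell^2 t\right)^{-\frac12}e^{\frac{2\pi ijd}{\ell}-\frac{\pi z_0^2}{2\ell^2 t}}$. (iii) If $x_0=\tfrac12$ and $|y_0|\ge\tfrac12$, then $|n+x_0|=\tfrac12$ precisely for $n\in\{0,-1\}$, both occurring, while all other $n$ have $|n+x_0|\ge\tfrac32$; hence $F_{d,\ell}(z;it)$ is asymptotic to the sum of the $n=0$ and $n=-1$ summands. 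Substituting $z_0=\tfrac12+iy_0$ and simplifying phases, this sum is
\[
\left(2\ell^2 t\right)^{-\frac12}e^{\frac{2\pi ijd}{\ell}}e^{-\frac{\pi}{8\ell^2 t}+\frac{\pi y_0^2}{2\ell^2 t}}e^{\frac{\pi i d}{\ell}}\left(e^{-\frac{\pi i y_0}{2\ell^2 t}-\frac{\pi i d}{\ell}}+e^{\frac{\pi i y_0}{2\ell^2 t}+\frac{\pi i d}{\ell}}\right),
\]
which is $2\left(2\ell^2 t\right)^{-\frac12}\cos\!\left(\frac{\pi}{\ell}\left(d+\frac{y_0}{2\ell t}\right)\right)e^{-\frac{\pi}{8\ell^2 t}+\frac{\pi y_0^2}{2\ell^2 t}+\frac{\pi i(2j+1)d}{\ell}}$, as claimed.

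The one point I would flag is the meaning of ``$\sim$'' in (ii) and especially (iii): the displayed right-hand sides carry the oscillating factor $e^{-\pi z_0^2/(2\ell^2 t)}$, and in (iii) a cosine whose zeros accumulate at $t=0$, so $F_{d,\ell}(z;it)/(\text{RHS})$ need not tend to $1$ pointwise. What the argument gives — and what is meant — is that the displayed term is the dominant contribution, all omitted terms (the remaining theta summands, of strictly smaller exponential order, and the bounded $P(t)$) being of smaller order; away from the zeros of the cosine the equivalence holds in the usual sense. Beyond this, nothing is hard once Lemma~\ref{3.3} is available: only the elementary case split and the phase simplification in (iii) require attention.
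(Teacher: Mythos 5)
Your proof is correct and follows essentially the same route as the paper: both read the three cases off the expansion of Lemma~\ref{3.3} by noting that the $n$-th summand of the theta part has modulus $(2\ell^2 t)^{-1/2}\exp\bigl(\pi(y_0^2-(n+x_0)^2)/(2\ell^2 t)\bigr)$, so the index set is empty exactly when $|x_0|>|y_0|$, the $n=0$ term dominates when $|x_0|\le|y_0|$ and $x_0\ne 1/2$, and $n=0,-1$ jointly dominate when $x_0=1/2$. Your closing caveat about the meaning of $\sim$ near the zeros of the cosine in (iii) is a fair reading of the statement and does not affect the argument.
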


\begin{proof} One can see that the dominant term  in the sum in \eqref{eqn_Fdfirstsum}  occurs  for $n=0$, and additionally for $n=-1$ if $x_0=1/2$.  Moreover, we obtain   a contribution exactly if  $|x_0|>|y_0|$.  This yields (i).

We next assume $|x_0|\leq |y_0|$. One directly obtains (ii) if $x_0\neq 1/2$.  If $x_0=1/2$, we simplify the exponent of the $n=-1$ term in the sum in (\ref{eqn_Fdfirstsum})  to give the claim (iii).
\end{proof} 
\begin{remark}  Lemma \ref{2.2} together with results in Section 4 (Lemma \ref{lemma0.1} and Corollary {4.2}) gives another method for determining asymptotic behaviors and expansions of the functions $F_{d,\ell}$ in a certain subset of $\{z\in \mathbb C \colon \im(z)>0\}$.  This  provides another ``explanation" of the anti-Stokes lines $|u_0|=|v_0|$.  
\end{remark}
\indent
Finally, we establish the asymptotic main terms of the functions $G_{d,\ell}$   if  $\im(z)<0$. 
\begin{corollary} \label{3.5}
We have the following  behavior for $\im(z)<0$, as $t\to 0^+$:
\begin{itemize}[leftmargin=*,align=left]
	\item[\rm{(i)}] If $|x_0|>|y_0|$ or $( |x_0| \leq |y_0|, \ell \mid 2d$, and $x_0\neq 1/2)$ or $(\ell \mid 2dj$ but $\ell\nmid 2d$, $|y_0|<1-|x_0|$, and $x_0\neq 1/2)$, then
	$$
G_{d,\ell}(z;it)\sim\frac{2i\sin\left(2\pi dz\right)}{1-\zeta^\ell}.
	$$
	\item[\rm{(ii)}] If $\ell\nmid 2dj$, $|x_0|\leq |y_0|$, and $x_0\neq 1/2$, then
	$$
	G_{d,\ell}(z;it)\sim 2i\left(2\ell^2 t\right)^{-\frac12}\sin\left(\frac{2\pi jd}{\ell}\right)e^{-\frac{\pi z_0^2}{2\ell^2 t}}.
	$$
	\item[\rm{(iii)}] If $\ell \mid 2dj$ but $\ell\nmid 2d$, $1-|y_0|\leq |x_0|\leq |y_0|$, and $x_0\neq 1/2$, then
	$$
	G_{d,\ell}(z;it)\sim 2i (-1)^{\frac{2dj}{\ell}} \left(2\ell^2 t\right)^{-\frac12}e^{-\frac{\pi z_0^2}{2\ell^2 t}-\frac{\pi}{2\ell^2t}+\frac{\pi\sgn(x_0)z_0}{\ell^2t}}
	\sgn(x_0)\sin\left(\frac{2\pi d}{\ell}\right)
	$$
	\item[\rm{(iv)}] If $x_0=1/2$ and $|y_0|\geq 1/2$, then
	\begin{align*}
	G_{d,\ell}(z;it)\sim 2\left(2\ell^2 t\right)^{-\frac12}e^{-\frac{\pi}{8\ell^2 t}+\frac{\pi y_0^2}{2\ell^2 t}}\sum_\pm \pm\cos\left(\frac{\pi}{\ell}\left(d\pm\frac{y_0}{2\ell t}\right)\right)e^{\pm\frac{\pi i(2j+1)d}{\ell}}.
	\end{align*}
\end{itemize}
\end{corollary}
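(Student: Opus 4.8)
The plan is to derive all four parts of Corollary \ref{3.5} directly from the asymptotic expansion \eqref{eqn_Gdfirstsum} for $G_{d,\ell}(z;it)$ established in Lemma \ref{3.3}, by the same kind of careful term-by-term analysis used in the proof of Corollary \ref{3.4}, but now keeping track of an extra subtlety: the summand $\sin\!\left(\tfrac{2\pi d}{\ell}(j-n)\right)$ can vanish for certain values of $n$, so the ``dominant'' exponential term in the finite sum over $\{n\in\Z : |n+x_0|\le|y_0|\}$ may be killed, and one must pass to the next term. First I would record, exactly as in Corollary \ref{3.4}, that among the exponentials $e^{-\frac{\pi n^2}{2\ell^2 t}-\frac{\pi z_0 n}{\ell^2 t}}$ appearing in \eqref{eqn_Gdfirstsum}, the real part of the exponent as $t\to 0^+$ is governed by $-\frac{\pi}{2\ell^2 t}(n^2+2x_0 n)$, which is maximized (over $n\in\Z$) at $n=0$, with $n=-1$ tied precisely when $x_0=1/2$; the constraint $|n+x_0|\le|y_0|$ means $n=0$ actually occurs in the sum iff $|x_0|\le|y_0|$, and $n=-1$ occurs iff $|x_0-1|\le|y_0|$, i.e. (using $-\tfrac12<x_0\le\tfrac12$) iff $1-|y_0|\le x_0$.

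Next I would organize the casework according to which exponentials survive and whether their sine coefficients vanish. For part (i): if $|x_0|>|y_0|$ the theta-type sum in \eqref{eqn_Gdfirstsum} is empty, so $G_{d,\ell}(z;it)$ is asymptotic to the leading term $2i\,\mathcal D_z^0\!\left(\tfrac{\sin(2\pi dz)}{1-\zeta^\ell}\right)=\tfrac{2i\sin(2\pi dz)}{1-\zeta^\ell}$ of the second (power-series in $t$) sum, provided that leading term is nonzero. If $|x_0|\le|y_0|$ but $\ell\mid 2d$ and $x_0\ne1/2$, only the $n=0$ exponential is present and its coefficient is $\sin\!\left(\tfrac{2\pi dj}{\ell}\right)=\sin(\pi\cdot\text{integer})=0$; likewise any other surviving exponential's coefficient $\sin\!\left(\tfrac{2\pi d}{\ell}(j-n)\right)$ vanishes, so the entire theta-type sum is zero and again the power-series leading term dominates — this is why $\ell\mid 2d$ lands in case (i). The third alternative in (i), namely $\ell\mid 2dj$ but $\ell\nmid 2d$ with $|y_0|<1-|x_0|$ and $x_0\ne1/2$: here the constraint $1-|y_0|\le x_0$ for the $n=-1$ term is violated and also the constraint $x_0\le -(1-|y_0|)$ for the $n=1$ term would need checking, but $|y_0|<1-|x_0|$ forces neither $n=1$ nor $n=-1$ to occur (only $n=0$ can, when $|x_0|\le|y_0|$), and its coefficient $\sin(\tfrac{2\pi dj}{\ell})=0$ since $\ell\mid 2dj$; hence the theta sum vanishes and (i) holds. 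For part (ii), with $\ell\nmid 2dj$, $|x_0|\le|y_0|$, $x_0\ne1/2$: the $n=0$ term is present with nonzero coefficient $\sin(\tfrac{2\pi dj}{\ell})$ and strictly dominates all other exponentials (since $x_0\ne1/2$ rules out the $n=-1$ tie and all $|n|\ge1$ terms are exponentially smaller), and it also dominates the $O(t^{a})$ power-series terms because it blows up like $t^{-1/2}e^{\frac{\pi|y_0|^2}{2\ell^2t}\cdot(\text{something})}$; extracting the $n=0$ summand of \eqref{eqn_Gdfirstsum} gives exactly the stated asymptotic. For part (iii), with $\ell\mid 2dj$, $\ell\nmid 2d$, $1-|y_0|\le|x_0|\le|y_0|$, $x_0\ne1/2$: now the $n=0$ coefficient vanishes (as $\ell\mid 2dj$), so one passes to whichever of $n=\pm1$ is present — the condition $1-|y_0|\le|x_0|$ together with the sign of $x_0$ selects $n=-\sgn(x_0)$ as the term in the sum, whose coefficient is $\sin\!\left(\tfrac{2\pi d}{\ell}(j+\sgn(x_0))\right)=\cos(\pi\cdot\tfrac{2dj}{\ell})\sin(\tfrac{2\pi d\sgn(x_0)}{\ell})=(-1)^{2dj/\ell}\sgn(x_0)\sin(\tfrac{2\pi d}{\ell})$ (using $\ell\mid 2dj$ and oddness of sine); substituting $n=-\sgn(x_0)$ into the exponent $-\frac{\pi n^2}{2\ell^2t}-\frac{\pi z_0 n}{\ell^2t}$ produces $-\frac{\pi}{2\ell^2t}+\frac{\pi\sgn(x_0)z_0}{\ell^2t}$, matching the stated formula, and one checks this beats the power-series terms since $1-|y_0|\le|x_0|\le|y_0|$ forces the real part of this exponent to be nonnegative-growing. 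For part (iv), with $x_0=1/2$, $|y_0|\ge1/2$: both $n=0$ and $n=-1$ are present and tied in exponential size; writing $z_0=\tfrac12+iy_0$ and combining the two summands, the factors $e^{-\frac{\pi n^2}{2\ell^2t}-\frac{\pi z_0 n}{\ell^2t}}$ for $n=0,-1$ produce a common factor $e^{-\frac{\pi}{8\ell^2t}+\frac{\pi y_0^2}{2\ell^2t}}$ times $e^{\mp\frac{\pi i y_0}{2\ell^2 t}}$-type oscillations, and the coefficients $\sin(\tfrac{2\pi dj}{\ell})$ and $\sin(\tfrac{2\pi d(j+1)}{\ell})$ recombine via product-to-sum into the $\sum_\pm \pm\cos(\tfrac{\pi}{\ell}(d\pm\tfrac{y_0}{2\ell t}))e^{\pm\frac{\pi i(2j+1)d}{\ell}}$ shape claimed; again this $\sim t^{-1/2}e^{\frac{\pi}{2\ell^2t}(y_0^2-1/4)}$ growth dominates the $O(t^a)$ terms when $|y_0|\ge1/2$.

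The routine part is the bookkeeping of which lattice points $n$ satisfy $|n+x_0|\le|y_0|$ and the comparison of real parts of exponents, which is essentially identical to the argument already given for Corollary \ref{3.4}; I would mostly cite that proof and only spell out the new features. The main obstacle — and the place where real care is needed — is part (iii) and part (i)'s third sub-case: precisely pinning down, from the inequalities $1-|y_0|\le|x_0|\le|y_0|$ (resp. $|y_0|<1-|x_0|$), which of $n=0,\pm1$ actually lie in the summation range, and then correctly evaluating the sine coefficient $\sin\!\left(\tfrac{2\pi d}{\ell}(j-n)\right)$ under the hypothesis $\ell\mid 2dj$, $\ell\nmid 2d$ — this is where the factor $(-1)^{2dj/\ell}\sgn(x_0)$ is produced and where a sign error would be easy to make. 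A secondary subtlety throughout is confirming in each regime that the exponentially large (or oscillating $t^{-1/2}$) contribution genuinely dominates the polynomial-in-$t$ tail $2i\sum_{a}\mathcal D_z^{2a}\!\left(\tfrac{\sin(2\pi dz)}{1-\zeta^\ell}\right)\tfrac{(-2\pi t)^a}{a!}$, and conversely in case (i) that this polynomial tail has nonvanishing leading coefficient $\tfrac{2i\sin(2\pi dz)}{1-\zeta^\ell}$ (which is where $\im(z)<0$, hence $z\notin\tfrac1\ell\Z$ and $\sin(2\pi dz)\ne0$ for $d\ne0$, is used).
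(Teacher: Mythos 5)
Your proposal is correct and follows essentially the same route as the paper: both derive all four cases from the expansion \eqref{eqn_Gdfirstsum} of Lemma \ref{3.3}, reuse the dominant-term analysis of Corollary \ref{3.4}, and handle the new feature that the coefficients $\sin\left(\tfrac{2\pi d}{\ell}(j-n)\right)$ may vanish (forcing the sum to vanish entirely when $\ell\mid 2d$, or a passage to the next term $n=-\sgn(x_0)$ when $\ell\mid 2dj$ but $\ell\nmid 2d$). Your write-up is simply a more detailed version of the paper's terse argument, and the sign/coefficient computations you carry out in cases (iii) and (iv) match the stated formulas.
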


\begin{proof}
The only thing that has to be considered is that by applying case (ii) of Corollary \ref{3.4}  to the two functions $F_{d,\ell}$ and $F_{-d,\ell}$ defining $G_{d,\ell}$, the overall contribution could be $0$.  

 If $\ell\nmid 2dj$ this is not the case. If $\ell \mid 2d$, then we use Lemma \ref{3.3}, and find that the entire contribution arising from the first sum on the right-hand side of (\ref{eqn_Gdfirstsum}) vanishes. If $\ell \mid 2dj$ but $\ell\nmid 2d$, {then} we again  use the expansion in Lemma \ref{3.3} and look at the next highest term, which occurs {for} $n=-\sgn(x_0)$. 
\end{proof}

\section{ Asymptotic expansions of $F_{d,\ell}$ and $G_{d,\ell}$ if  $\im(z)= 0$ }\label{sec_zreal}
We are left to consider the asymptotic properties of the  partial theta functions $F_{d,\ell}$ and  the functions   $G_{d,\ell}$ if $\operatorname{Im}(z)=0$. For this, we begin by establishing a slight generalization of Theorem 4.1 of \cite{BK}, namely Lemma \ref{lemma0.1} below, which holds for any $z\in\mathbb C$ satisfying $|z|<1/(4\ell)$.   From this we ultimately deduce asymptotic expansions of the functions $F_{d,\ell}$ and $G_{d,\ell}$ for real $z$ with $|x_0|<1/4$ in Corollary \ref{4.3}.  In Corollary \ref{wholeexp} below we are able to remove the restriction that $|x_0|<1/4$ and provide asymptotic expansions for any $z\in \mathbb R$, after establishing some technical lemmas.

\begin{lemma}\label{lemma0.1}
For $|z|< 1/(4 \ell)$,  $\tau \in \mathbb H,$ $N\in\N_0$, $d\in\Q$, and  $\ell\in\N$, we have that  $$
F_{d,\ell}\left(z;\tau\right)=\sum_{b\geq 0}\frac{(2\pi i\ell z)^b}{b!}\left(\frac{\Gamma\left(\frac{b+1}{2}\right)}{2(-2\pi i\ell^2 \tau)^{\frac{b+1}{2}}}-\sum_{a=0}^N\frac{\left(2\pi i\ell^2 \tau\right)^a}{a!}\frac{B_{2a+b+1}\left(\frac{d}{\ell}\right)}{2a+b+1}\right)+O\left(|\tau|^{N+1}\right).
$$
\end{lemma}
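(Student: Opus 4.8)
The plan is to start from the $q$-series definition of $F_{d,\ell}$ and expand everything as power series in $\tau$ and in $z$ simultaneously, then recognize the $z$-series coefficients in terms of the error-function identity of Lemma \ref{2.1} and the Bernoulli-polynomial identity of Lemma \ref{derivative}. Concretely, write $q^{(\ell n + d)^2} = e^{2\pi i \tau (\ell n+d)^2}$ and $\zeta^{\ell n + d} = e^{2\pi i z(\ell n + d)}$, and expand the elliptic factor: $F_{d,\ell}(z;\tau) = \sum_{b\geq 0} \frac{(2\pi i z)^b}{b!} \sum_{n\geq 0} (\ell n + d)^b q^{(\ell n+d)^2}$. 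So the whole problem reduces to understanding the one-variable sums $\Phi_b(\tau) := \sum_{n\geq 0}(\ell n + d)^b q^{(\ell n + d)^2}$ as $\tau \to 0$ in $\mathbb H$, uniformly enough in $b$ to resum. Pulling out $\ell^b$, it suffices to analyze $\sum_{n\geq 0}(n + d/\ell)^b e^{2\pi i \ell^2 \tau (n + d/\ell)^2}$.

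The key step is the asymptotic expansion of $\Phi_b$. I would use the standard Euler–Maclaurin / Mellin approach: the $b=0,1$ behavior of such partial theta sums is governed by an integral term plus a Bernoulli-type correction series, and one passes to general $b$ either by differentiating in $\tau$ or directly via Euler–Maclaurin applied to $f(x) = (x + d/\ell)^b e^{2\pi i \ell^2 \tau (x + d/\ell)^2}$. The leading term is the integral $\int_0^\infty (x+d/\ell)^b e^{2\pi i \ell^2\tau(x+d/\ell)^2}\,dx$; substituting $u = (x+d/\ell)\sqrt{-2\pi i \ell^2 \tau}$ turns this into an incomplete Gamma / error-function expression, and since $|z| < 1/(4\ell)$ keeps us in a regime where the error-function asymptotics of Section \ref{sec_aux} apply, expanding via (\ref{NIST2})–(\ref{NIST}) produces the term $\frac{\Gamma((b+1)/2)}{2(-2\pi i \ell^2\tau)^{(b+1)/2}}$ after summing over $b$ against $(2\pi i \ell z)^b/b!$ — this is precisely where Lemma \ref{2.1} gets invoked to collapse the double series. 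The Euler–Maclaurin correction terms, which involve Bernoulli numbers evaluated against derivatives of $f$ at $x=0$, reorganize — after expanding $e^{2\pi i\ell^2\tau(\cdot)^2}$ in powers of $\tau$ and using the generating function for Bernoulli polynomials (as in Lemma \ref{derivative}, formula (\ref{eqn_dber})) — into the double sum $-\sum_{a=0}^N \frac{(2\pi i \ell^2\tau)^a}{a!}\frac{B_{2a+b+1}(d/\ell)}{2a+b+1}$, with the tail absorbed into $O(|\tau|^{N+1})$.

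The main obstacle will be making the interchange of the three sums (over $n$, over $b$, over the Euler–Maclaurin/Taylor index $a$) rigorous while tracking the error uniformly — i.e. justifying that the $z$-expansion converges and can be resummed term-by-term for $|z| < 1/(4\ell)$, and that the $O$-term is genuinely uniform in this disk rather than just pointwise in $z$. This is exactly the content of "a slight generalization of Theorem 4.1 of \cite{BK}": the radius $1/(4\ell)$ is presumably dictated by the region of convergence of the relevant error-function/Gamma expansions after the change of variables, so the bookkeeping of which expansion (\ref{NIST2}) versus (\ref{NIST}) is valid, and with what error, is where the care is needed. I would handle this by fixing $N$ first, truncating the $\tau$-expansion at order $N$ with explicit remainder, and only then expanding in $z$, bounding the double-series remainder by a geometric-type estimate valid on $|z|\le \rho < 1/(4\ell)$; the restriction to $d\in\mathbb Q$ and $\ell \in \mathbb N$ plays no essential analytic role beyond guaranteeing $d/\ell$ is a fixed real shift, so I would not dwell on it. Everything else is routine manipulation of the series identities already recorded in Section \ref{sec_aux}.
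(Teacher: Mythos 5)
Your route is genuinely different from the paper's, and much heavier. The paper does not reprove the underlying asymptotic expansion at all: it observes (see the remark following the lemma) that the statement for $d>0$ is exactly Theorem 4.1 of \cite{BK}, chooses $r\in\mathbb N$ with $d':=r\ell+d>0$, applies the shifting identity of Lemma \ref{lem_fshift} to write $F_{d,\ell}=F_{d',\ell}+\sum_{a\geq 0}\mathcal{D}_z^{2a}\bigl(\zeta^{d}(1-\zeta^{\ell r})/(1-\zeta^{\ell})\bigr)(2\pi i\tau)^a/a!$, and then applies Lemma \ref{derivative} twice to see that the correction sum precisely converts the Bernoulli polynomials $B_{2a+b+1}(d'/\ell)$ into $B_{2a+b+1}(d/\ell)$ (the pole terms $(2a)!/(2\pi i\ell z)^{2a+1}$ cancel in the difference). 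That is the entire proof. What you propose instead is a from-scratch Euler--Maclaurin/Mellin derivation of the expansion of $\Phi_b(\tau)=\sum_{n\geq 0}(\ell n+d)^b q^{(\ell n+d)^2}$ followed by resummation over $b$; this is essentially a sketch of the proof of the \cite{BK} result itself. It buys self-containedness, at the cost of all the uniformity and interchange-of-sums bookkeeping you correctly identify as the hard part. One small conflation: Lemma \ref{2.1} and the error-function expansions are not needed for this lemma, since the $b$-sum is left unsummed in the statement; they enter only later, in Corollary \ref{4.2}.

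There is also a concrete gap, and it sits exactly where you chose not to dwell. The only content of this lemma beyond \cite{BK} is the extension from $d>0$ to arbitrary $d\in\mathbb{Q}$, and your Euler--Maclaurin argument implicitly assumes the shift $d/\ell$ is nonnegative. For $d<0$ the first few summands have $\ell n+d<0$, the leading term is an incomplete rather than a complete Gamma integral (so you do not directly get $\Gamma\bigl(\tfrac{b+1}{2}\bigr)/\bigl(2(-2\pi i\ell^2\tau)^{(b+1)/2}\bigr)$), and the standard asymptotic formula $\sum_{n\geq 0}f((n+\alpha)t)\sim\frac1t\int_0^\infty f(x)\,dx-\sum_m\frac{B_{m+1}(\alpha)}{(m+1)!}f^{(m)}(0)t^m$ is stated for $\alpha\in[0,1]$, not for negative $\alpha$. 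To repair this you would have to peel off the finitely many terms with $\ell n+d<0$ and track how they recombine with the Bernoulli polynomials under $B_n(x+1)-B_n(x)=nx^{n-1}$ --- which is precisely the reduction the paper performs in one line via Lemma \ref{lem_fshift} and Lemma \ref{derivative}. So either add that reduction as a first step (after which your analysis only needs the $d>0$ case, i.e.\ \cite{BK}), or supply the extension of Euler--Maclaurin to negative shifts explicitly.
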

\begin{remark}
For $d>0$, Lemma \ref{lemma0.1} is given in \cite{BK} as Theorem 4.1.
\end{remark}
\begin{proof}
  Choose {any} $r \in \mathbb N$ such that $d':=r\ell +d>0$.  By Lemma \ref{lem_fshift}, we have
\begin{align}\label{Fde2}
F_{d, \ell}(z;\tau)=F_{d', \ell}(z;\tau)+\sum_{a\geq 0} \mathcal{D}_z^{2a}\left(\frac{\zeta^{d}\left(1-\zeta^{\ell r}\right)}{1-\zeta^{\ell}}\right)\frac{(2\pi i\tau)^a}{a!}.  
\end{align}   For the first term in (\ref{Fde2}), we may use the asymptotic expansion in Lemma 4.1 with $d'$ instead of $d$, as it is known to be true from \cite{BK}.    For the second term, we apply Lemma \ref{derivative} twice, giving the claim.
\end{proof}

\indent
We apply Lemma \ref{lemma0.1} to determine  the  asymptotic expansion of the partial theta functions $F_{d,\ell}$. 
\begin{corollary}\label{4.2} 
For $|z|<1/(4\ell)$ and $N\in\N_0$, as $t\to 0^+$, we have that
\begin{multline*}
F_{d,\ell}(z;it)=\frac{1}{2\ell (2t)^{\frac12}}e^{-\frac{\pi z^2}{2t}}\left(1+\operatorname{erf}\left(\frac{\sqrt{\pi}iz}{\sqrt{2t}}\right)\right)\\
+\sum_{a=0}^N \left(\mathcal{D}_z^{2a}\left(\frac{\zeta^d}{1-\zeta^\ell}\right)+\frac{(2a)!}{\ell(2\pi iz)^{2a+1}}\right)\frac{(-2\pi t)^a}{a!}+O\left(t^{N+1}\right).
\end{multline*}
\end{corollary}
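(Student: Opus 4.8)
The plan is to start from the expansion in Lemma \ref{lemma0.1} and repackage its two pieces separately. The first piece, coming from the $\Gamma$-function terms, is
$$
\sum_{b\geq 0}\frac{(2\pi i\ell z)^b}{b!}\cdot\frac{\Gamma\left(\frac{b+1}{2}\right)}{2(-2\pi i\ell^2\tau)^{\frac{b+1}{2}}},
$$
and with $\tau=it$ one has $-2\pi i\ell^2\tau=2\pi\ell^2 t$, so $(-2\pi i\ell^2\tau)^{-(b+1)/2}=(2\pi\ell^2 t)^{-(b+1)/2}$. Pulling out the $b$-independent factor $(2\pi\ell^2 t)^{-1/2}$ and writing $w:=\sqrt{\pi}\,i z/\sqrt{2t}$, I would check that $(2\pi i\ell z)^b(2\pi\ell^2 t)^{-b/2}=(2w)^b$, so that this first piece equals
$$
\frac{1}{\sqrt{2\pi\ell^2 t}}\cdot\frac12\sum_{b\geq0}\frac{(2w)^b\,\Gamma\!\left(\frac{b+1}{2}\right)}{b!}.
$$
Now Lemma \ref{2.1} (equation \eqref{erid}) identifies the sum as $\sqrt{\pi}\,e^{w^2}(1+\operatorname{erf}(w))$, and since $w^2=-\pi z^2/(2t)$, the first piece becomes exactly
$$
\frac{1}{2\ell(2t)^{1/2}}e^{-\frac{\pi z^2}{2t}}\left(1+\operatorname{erf}\!\left(\frac{\sqrt{\pi}\,iz}{\sqrt{2t}}\right)\right),
$$
which is the claimed main term. (One should note that \eqref{erid} holds for all $w\in\mathbb C$, so no restriction on $\Arg(w)$ is needed here.)

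Next I would handle the second (finite) sum from Lemma \ref{lemma0.1}, namely
$$
-\sum_{b\geq0}\frac{(2\pi i\ell z)^b}{b!}\sum_{a=0}^N\frac{(2\pi i\ell^2\tau)^a}{a!}\cdot\frac{B_{2a+b+1}\!\left(\frac{d}{\ell}\right)}{2a+b+1}.
$$
The idea is to interchange the order of summation so the outer sum is over $a$ from $0$ to $N$, and then recognize the inner sum over $b$ as (minus) the Bernoulli-polynomial series appearing in \eqref{eqn_dber} of Lemma \ref{derivative}. Indeed, \eqref{eqn_dber} says
$$
\mathcal{D}_z^{2a}\left(\frac{\zeta^d}{1-\zeta^\ell}\right)+\frac{(2a)!}{\ell(2\pi iz)^{2a+1}}=-\ell^{2a}\sum_{b\geq0}\frac{(2\pi i\ell z)^b B_{2a+b+1}\!\left(\frac{d}{\ell}\right)}{b!(2a+b+1)},
$$
so the inner $b$-sum equals $-\ell^{-2a}\bigl(\mathcal{D}_z^{2a}(\zeta^d/(1-\zeta^\ell))+(2a)!/(\ell(2\pi iz)^{2a+1})\bigr)$. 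Substituting $\tau=it$, the factor $(2\pi i\ell^2\tau)^a\ell^{-2a}=(2\pi i\cdot it)^a=(-2\pi t)^a$, and the overall minus signs combine to give precisely
$$
\sum_{a=0}^N\left(\mathcal{D}_z^{2a}\left(\frac{\zeta^d}{1-\zeta^\ell}\right)+\frac{(2a)!}{\ell(2\pi iz)^{2a+1}}\right)\frac{(-2\pi t)^a}{a!},
$$
matching the second term in the statement. The error term $O(|\tau|^{N+1})=O(t^{N+1})$ is inherited directly from Lemma \ref{lemma0.1}.

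The only genuinely delicate points, rather than obstacles, are bookkeeping ones: justifying the interchange of the two summations in the second piece (this is a finite sum in $a$ and, for fixed $\tau$ with $|z|<1/(4\ell)$, an absolutely convergent power series in $b$, so Fubini applies), and tracking the branch of the square root so that $(-2\pi i\ell^2\tau)^{1/2}=(2\pi\ell^2 t)^{1/2}$ for $\tau=it$, which also pins down $w=\sqrt{\pi}\,iz/\sqrt{2t}$ with the correct sign. I expect the main subtlety to be confirming that the argument of $\operatorname{erf}$ produced by this manipulation is exactly $\sqrt{\pi}\,iz/\sqrt{2t}$ and not, say, its negative — but since \eqref{erid} is symmetric enough and $1+\operatorname{erf}$ is the relevant combination, this is resolved by the identity $w^2=-\pi z^2/(2t)$ together with the explicit form of \eqref{erid}. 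Everything else is direct substitution.
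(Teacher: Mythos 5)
Your proposal is correct and follows essentially the same route as the paper: the paper's proof likewise evaluates the $\Gamma$-term of Lemma \ref{lemma0.1} at $\tau=it$ via Lemma \ref{2.1} to produce the error-function main term, and identifies the Bernoulli double sum with the finite $a$-sum by applying Lemma \ref{derivative}. Your version simply spells out the bookkeeping (the identification $(2\pi i\ell z)^b(2\pi\ell^2 t)^{-b/2}=(2w)^b$, the sign cancellation, and the interchange of summation) that the paper leaves implicit.
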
 
\begin{proof}
 Using Lemma \ref{2.1}, the first term in Lemma \ref{lemma0.1}  for $\tau=it$ evaluates as the first summand in the corollary.
The second term in Lemma \ref{lemma0.1} is
\begin{align*}
-\sum_{a=0}^N \frac{(-2\pi \ell^2t)^a}{a!} \sum_{b\geq 0}\frac{(2\pi i\ell z)^b}{b!}\frac{B_{2a+b+1}\left(\frac{d}{\ell}\right)}{2a+b+1}. 
\end{align*}
The claim follows by applying Lemma \ref{derivative}.
\end{proof}

\indent
We next turn to the question of establishing the asymptotic properties of the  functions $F_{d,\ell}$ and $G_{d,\ell}$ if $z\in\R$.   We first do so in Corollary \ref{4.3} if $|x_0|<1/4$. 
\begin{corollary}\label{4.3} 
For $z=(x_0+j)/\ell \in\R$, where $j\in\Z, \ell \in \mathbb N$,  and $|x_0|<1/4$, for any $N\in\mathbb N_0$, as $t\to 0^+$, we have 
\begin{align*}
F_{d,\ell}\left(z;it\right)&=\frac{e^{\frac{2\pi idj}{\ell}}}{2\ell (2t)^{\frac12}}e^{-\frac{\pi x_0^2}{2\ell^2 t}}\left(1+\operatorname{erf}\left(\frac{\sqrt{\pi}ix_0}{\ell\sqrt{2t}}\right)\right)\\
& \quad+\sum_{a=0}^N \left(\mathcal{D}_z^{2a}\left(\frac{\zeta^d}{1-\zeta^\ell}\right)+e^{\frac{2\pi idj}{\ell}}\frac{(2a)!\ell^{2a}}{(2\pi ix_0)^{2a+1}}\right)\frac{(-2\pi t)^a}{a!}+O\left(t^{N+1}\right),\\
G_{d,\ell}\left(z;it\right)&=\frac{i\sin\left(\frac{2\pi dj}{\ell}\right)}{\ell (2t)^{\frac12}}e^{-\frac{\pi x_0^2}{2\ell^2 t}}\left(1+\operatorname{erf}\left(\frac{\sqrt{\pi}ix_0}{\ell\sqrt{2t}}\right)\right)\\
& \quad +2i\sum_{a=0}^N \left(\mathcal{D}_z^{2a}\left(\frac{\sin\left(2\pi dz\right)}{1-\zeta^\ell}\right)+\frac{\sin\left(\frac{2\pi dj}{\ell}\right)(2a)!\ell^{2a}}{(2\pi ix_0)^{2a+1}}\right)\frac{(-2\pi t)^a}{a!}+O\left(t^{N+1}\right).
\end{align*}

\end{corollary}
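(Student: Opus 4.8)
The plan is to reduce Corollary~\ref{4.3} to the already-established Corollary~\ref{4.2}, exploiting the fact that $z=(x_0+j)/\ell$ with $|x_0|<1/4$ differs from the base point $x_0/\ell$ (which lies in the region $|z|<1/(4\ell)$ where Corollary~\ref{4.2} applies) only by an integer shift $j/\ell$ in $z$. First I would note that shifting $z\mapsto z+j/\ell$ multiplies $\zeta=e^{2\pi i z}$ by a root of unity only in the $\zeta^d$ factor, namely $\zeta\mapsto \zeta e^{2\pi i j/\ell}$, while $\zeta^\ell$ is unchanged; concretely $F_{d,\ell}((x_0+j)/\ell;it)=\sum_{n\ge0}e^{2\pi i (\ell n+d)(x_0+j)/\ell}q^{(\ell n+d)^2}=e^{2\pi i dj/\ell}\sum_{n\ge0}e^{2\pi i(\ell n+d)x_0/\ell}q^{(\ell n+d)^2}=e^{2\pi i dj/\ell}F_{d,\ell}(x_0/\ell;it)$, since $e^{2\pi i n j}=1$. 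So the whole function picks up a clean constant factor $e^{2\pi i dj/\ell}$.

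Next I would apply Corollary~\ref{4.2} with $z$ replaced by $x_0/\ell$ (valid since $|x_0/\ell|<1/(4\ell)$), obtaining
\[
F_{d,\ell}\!\left(\tfrac{x_0}{\ell};it\right)=\frac{1}{2\ell(2t)^{\frac12}}e^{-\frac{\pi x_0^2}{2\ell^2 t}}\left(1+\operatorname{erf}\!\left(\frac{\sqrt{\pi}i x_0}{\ell\sqrt{2t}}\right)\right)+\sum_{a=0}^N\left(\mathcal{D}_z^{2a}\!\left(\frac{\zeta^d}{1-\zeta^\ell}\right)\Big|_{z=x_0/\ell}+\frac{(2a)!}{\ell(2\pi i x_0/\ell)^{2a+1}}\right)\frac{(-2\pi t)^a}{a!}+O\!\left(t^{N+1}\right),
\]
and then multiply through by $e^{2\pi i dj/\ell}$. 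The error term is preserved since the prefactor is a bounded constant. The first summand becomes exactly the claimed leading term, and $(2\pi i x_0/\ell)^{-2a-1}=\ell^{2a+1}(2\pi i x_0)^{-2a-1}$ turns the $a$-sum's pole term into $e^{2\pi i dj/\ell}(2a)!\ell^{2a}(2\pi i x_0)^{-2a-1}$ as stated. For the remaining piece I must check that $e^{2\pi i dj/\ell}\,\mathcal{D}_z^{2a}(\zeta^d/(1-\zeta^\ell))|_{z=x_0/\ell}$ equals $\mathcal{D}_z^{2a}(\zeta^d/(1-\zeta^\ell))|_{z=(x_0+j)/\ell}$; this follows because differentiation in $z$ commutes with the translation $z\mapsto z+j/\ell$ and because, as computed above, $\zeta^d/(1-\zeta^\ell)$ evaluated at $z+j/\ell$ equals $e^{2\pi i dj/\ell}$ times its value at $z$ (the $\zeta^\ell$ in the denominator being translation-invariant). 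For $G_{d,\ell}$ I would use $G_{d,\ell}=F_{d,\ell}-F_{-d,\ell}$: the shift produces $e^{2\pi i dj/\ell}F_{d,\ell}(x_0/\ell;it)-e^{-2\pi i dj/\ell}F_{-d,\ell}(x_0/\ell;it)$, and since $x_0\mapsto$ the same real base point the erf-terms and pole-terms combine via $e^{2\pi i dj/\ell}-e^{-2\pi i dj/\ell}=2i\sin(2\pi dj/\ell)$, yielding the stated formula; the non-pole part of the $a$-sum reassembles into $2i\,\mathcal{D}_z^{2a}(\sin(2\pi dz)/(1-\zeta^\ell))$ exactly as in Lemma~\ref{3.1}.

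I do not expect a genuine obstacle here—the argument is a bookkeeping reduction—but the one point requiring care is the identification of the $\mathcal{D}_z^{2a}(\zeta^d/(1-\zeta^\ell))$ terms under the translation, i.e.\ making sure that what Corollary~\ref{4.2} produces with the ``internal'' variable set to $x_0/\ell$ matches the derivative of the full function $F_{d,\ell}$ at $z=(x_0+j)/\ell$ appearing in the statement. This is just the observation that $\mathcal{D}_z=\frac{1}{2\pi i}\partial_z$ is translation-covariant, but it must be spelled out so the two sides of the asymptotic expansion are literally the same symbol. Everything else is substitution of the explicit values $z=x_0/\ell$ and simplification of the powers of $\ell$.
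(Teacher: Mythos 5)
Your proposal is correct and follows essentially the same route as the paper: the paper's proof consists precisely of the identity $F_{d,\ell}\left(\frac{x_0+j}{\ell};\tau\right)=e^{\frac{2\pi i dj}{\ell}}F_{d,\ell}\left(\frac{x_0}{\ell};\tau\right)$ followed by an application of Corollary \ref{4.2}, and your careful bookkeeping of the phase factor through the erf term, the pole term, the translation-covariance of $\mathcal{D}_z^{2a}$, and the $2i\sin$ recombination for $G_{d,\ell}$ is exactly the (unwritten) detail the paper leaves to the reader.
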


\begin{proof}
We have 
$$
F_{d,\ell}(z;\tau)=e^{\frac{2\pi idj}{\ell}}\sum_{n\geq 0}e^{2\pi i\frac{x_0}{\ell}(\ell n+d)}q^{(\ell n+d)^2}=e^{\frac{2\pi idj}{\ell}}F_{d,\ell}\left(\frac{x_0}{\ell};\tau\right).
$$
The expansions for the functions $F_{d,\ell}$ and $G_{d,\ell}$ then follow from Corollary \ref{4.2}.  
\end{proof}

\indent
In order to remove  the  restriction in Corollary \ref{4.3} that $|x_0|<1/4,$ we begin by establishing Lemma \ref{4.4} below, which holds for certain real values of $z$. 
\begin{lemma} \label{4.4} 
If $z=\frac{w_0}{\ell}+\frac{c}{h \ell} \in \mathbb{R}$, $h \geq 2$, ${\rm gcd}(c,h)=1$, and $|w_0| < \frac{1}{4 h}$, then we have for any $N\in \mathbb N_0$, as $t\to 0^+$,  
 
\begin{align*}
F_{d, \ell}(z; it)&=\sum_{a=0}^N\mathcal{D}_z^{2a}\left(\frac{\zeta^d}{1-\zeta^\ell}\right)\frac{(-2\pi t)^a}{a!} +O\left(t^{N+1}\right),
\\
G_{d, \ell}(z; it)&=2i\sum_{a=0}^N\mathcal{D}_z^{2a}\left(\frac{\sin(2  \pi d z)}{1-\zeta^\ell}\right)\frac{(-2\pi t)^a}{a!} +O\left(t^{N+1}\right).
\end{align*}
\end{lemma}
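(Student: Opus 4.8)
The plan is to reduce to the already-established case of Corollary~\ref{4.3} (where $|x_0|<1/4$) by exploiting the rational shift in $z$. Write $z=\frac{w_0}{\ell}+\frac{c}{h\ell}$ with $|w_0|<\frac{1}{4h}$ and $\gcd(c,h)=1$. The key observation is that splitting the summation index $n\geq 0$ into residue classes modulo $h$ turns $F_{d,\ell}(z;it)$ into a finite sum of $h$ partial theta functions whose elliptic variable has been rescaled by a factor of $h$, thereby bringing it into the $|{\cdot}|<1/4$ regime. Concretely, writing $n=hm+\rho$ with $0\leq\rho\leq h-1$, one has $\ell n+d=\ell h m+(\ell\rho+d)$, and the factor $\zeta^{\ell n+d}q^{(\ell n+d)^2}$ becomes $e^{2\pi i z(\ell h m+\ell\rho+d)}q^{(\ell h m+\ell\rho+d)^2}$. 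The point of the rational part $\frac{c}{h\ell}$ is that $e^{2\pi i\frac{c}{h\ell}\cdot \ell h m}=e^{2\pi i c m}=1$, so after absorbing the $w_0$-part and the constant phases, each residue class contributes a function of the form $e^{(\text{phase})}F_{\ell\rho+d,\,\ell h}\!\left(\frac{w_0}{\ell h}\cdot(\text{rescaled});it\right)$, or more precisely an $F_{D,L}$ with $L=\ell h$, $D=\ell\rho+d$, and elliptic variable of absolute value at most $|w_0|/\ell< \frac{1}{4h\ell}=\frac{1}{4L}$ — exactly the range in which Corollary~\ref{4.3} (equivalently Corollary~\ref{4.2}) applies.

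The next step is to apply Corollary~\ref{4.3} to each of the $h$ pieces. Each piece produces an error-function term of the shape $(2L^2t)^{-1/2}e^{-\pi(\cdot)^2/(2L^2t)}(1+\operatorname{erf}(\cdots))$ together with an asymptotic series in $t$. Since $|w_0|<\frac{1}{4h}$ forces the argument of the exponential and of $\operatorname{erf}$ to lie in the regime $|\mathrm{Arg}(w)|\leq\pi/4$ (indeed the relevant $w$ is purely imaginary times a real number, hence sits exactly on the anti-Stokes line, but the contributions from neighbouring residue classes will pair up), one invokes Lemma~\ref{2.2}(i): the combination $F(t,w)-\frac{2w}{\sqrt t}e^{w^2/t}$ is $-\frac{1}{\sqrt\pi}+O(t)$, and the genuinely exponential parts $\frac{2w}{\sqrt t}e^{w^2/t}$ across the different residue classes must be shown to cancel. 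This cancellation is the heart of the matter: when one reassembles the $h$ rescaled theta functions back into $\mathcal M$-type full-lattice sums, the exponentially large contributions are precisely the terms one dropped in passing from $F$ to its "polynomial part'', and the residue decomposition is exactly designed so that summing over $\rho$ reconstitutes a full theta sum whose modular image has no surviving non-exponentially-small terms in this range of $z$. What remains after this cancellation is only the polynomial-in-$t$ part $\sum_{a}\mathcal D_z^{2a}\!\left(\frac{\zeta^d}{1-\zeta^\ell}\right)\frac{(-2\pi t)^a}{a!}$, which one recovers by re-summing the Bernoulli-polynomial expansions via Lemma~\ref{derivative} (the same bookkeeping already used in the proof of Corollary~\ref{4.2}). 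The statement for $G_{d,\ell}$ then follows immediately from that for $F_{d,\ell}$ and the defining relation $G_{d,\ell}(z;\tau)=F_{d,\ell}(z;\tau)-F_{-d,\ell}(z;\tau)$, together with $\mathcal D_z^{2a}$ being linear and $\frac{\zeta^d-\zeta^{-d}}{1-\zeta^\ell}=\frac{2i\sin(2\pi dz)}{1-\zeta^\ell}$ (note $-d$ also satisfies the hypotheses after a shift by $\ell$, handled by Lemma~\ref{lem_fshift} as in Lemma~\ref{lemma0.1}).

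The main obstacle I anticipate is making the cancellation of exponentially large terms rigorous and clean. One must carefully track the phases $e^{2\pi i D\cdot(\text{stuff})/L}$ attached to each residue class $\rho$, verify that the $\operatorname{erf}$-arguments $\frac{\sqrt\pi i w_0}{\ell h\sqrt{2t}}$-type quantities assemble correctly, and confirm that the "diverging'' half of $F(t,w)$ reconstructs exactly the full-lattice theta sum $\mathcal M_{D,L}$ (as in \eqref{eqn_Masymp}), which by the theta transformation \eqref{thetatrans} is a sum of terms $e^{-\pi(\cdot)^2/(2L^2t)}$ that are all exponentially small when the relevant shift parameter has absolute value bounded away from the lattice — a condition guaranteed precisely by $|w_0|<\frac{1}{4h}$. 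An alternative, possibly cleaner, route avoiding the residue decomposition would be to mimic the proof of Lemma~\ref{lemma0.1} directly: use Lemma~\ref{lem_fshift} to shift $d$ into a convenient range and then appeal to an appropriate generalization of the $|z|<1/(4\ell)$ expansion, but since Corollary~\ref{4.3} is stated only for $|x_0|<1/4$ this still requires the rescaling trick above to enlarge the admissible range of the elliptic variable. I would carry out the residue-class approach, as it makes the role of the hypothesis $\gcd(c,h)=1$ (needed so that $cm$ ranges over all residues and the phases $e^{2\pi i cm}$ are genuinely trivial, not merely periodic with smaller period) completely transparent.
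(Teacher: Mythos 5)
Your residue-class decomposition is exactly the one the paper uses: writing $n=hm+j$ gives
\[
F_{d,\ell}(z;\tau)=e^{\frac{2\pi i dc}{h\ell}}\sum_{j=0}^{h-1}e^{\frac{2\pi i cj}{h}}F_{\ell j+d,\,\ell h}\left(\tfrac{w_0}{\ell};\tau\right),
\]
and the hypothesis $|w_0|<\frac{1}{4h}$ puts each piece in the range of Corollary \ref{4.2} with $\ell\mapsto \ell h$. Your endgame (collapsing the geometric sum over $j$ inside $\mathcal D_z^{2a}$ and deducing the $G_{d,\ell}$ statement from linearity) also matches the paper. However, the step you yourself flag as "the heart of the matter" --- cancelling the error-function contributions --- is where your argument has a genuine gap. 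You propose to pair up neighbouring residue classes, reassemble the pieces into full-lattice sums $\mathcal M_{D,L}$, and invoke the theta transformation to see exponential smallness. That mechanism is both unnecessary and not workable as described: for real $w_0$ the relevant argument of $\operatorname{erf}$ is purely imaginary, so there are no "exponentially large" halves to cancel against a reconstructed theta function (Lemma \ref{2.2}(ii), not (i), governs this regime), and no pairing of residue classes occurs.

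The correct observation is much simpler and you missed it: in Corollary \ref{4.2} the error-function term $\frac{1}{2L(2t)^{1/2}}e^{-\pi w^2/(2t)}\left(1+\operatorname{erf}\left(\frac{\sqrt{\pi}iw}{\sqrt{2t}}\right)\right)$ and the singular corrections $\frac{(2a)!}{L(2\pi i w)^{2a+1}}$ depend only on the elliptic variable $w=w_0/\ell$ and on $L=\ell h$, \emph{not} on the theta index. Hence every residue class $j$ contributes the identical quantity, weighted by $e^{2\pi i cj/h}$, and all of these terms are annihilated at once by $\sum_{j=0}^{h-1}e^{2\pi i cj/h}=0$. This also corrects your account of why $\gcd(c,h)=1$ is needed: it is not about $e^{2\pi i cm}$ being trivial (that holds for any integer $c$), but about guaranteeing $e^{2\pi i c/h}\neq 1$ (together with $h\geq 2$) so that the root-of-unity sum vanishes. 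With this replacement your proof closes; without it, the central cancellation remains unproved.
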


\begin{proof}  
We may write
\begin{equation} \label{modify}
F_{d, \ell}(z;\tau) = e^{\frac{2 \pi i d c}{h \ell}} \sum_{j=0}^{h-1} e^{\frac{2 \pi i c j}{h}} F_{\ell j+d,\ell h }\left(\frac{w_0}{\ell};\tau\right).
\end{equation}

By  (\ref{modify}), Corollary \ref{4.2},  and the fact that $\sum_{j=0}^{h-1} e^{\frac{2 \pi i c j}{h}}=0$, we obtain
\begin{align*}
& F_{d,\ell}(z;it)=- e^{\frac{2 \pi i d c}{h \ell}} \sum_{j=0}^{h-1} e^{\frac{2 \pi i c j}{h}} \sum_{a=0}^N \left(\mathcal{D}_{\frac{w_0}{\ell}}^{2a}\left(\frac{e^{2\pi i (\ell j +d) \frac{w_0}{\ell}}}{e^{2\pi i \ell h \frac{w_0}{\ell}}-1}\right)-\frac{(2a)!}{\ell h \left(2\pi i \frac{w_0}{\ell}\right)^{2a+1}}\right) \frac{(-2\pi t)^a}{a!} +O\left(t^{N+1}\right)  \\  
& = -  \sum_{a=0}^N \mathcal{D}_{z}^{2a}\left(\sum_{j=0}^{h-1} e^{2 \pi i \left(\frac{dc}{h  \ell}+\frac{c j}{h} \right) } \frac{e^{2\pi i (\ell j +d) \frac{w_0}{\ell}}}{e^{2\pi i h w_0}-1}\right)\frac{(-2\pi t)^a}{  a!} +O\left(t^{N+1}\right) \\
& = -  \sum_{a=0}^N  \mathcal{D}_{z}^{2a}\left(\frac{e^{\frac{2 \pi i d c}{h \ell}+\frac{2 \pi i d w_0}{\ell}}}{e^{2 \pi i h w_0} -1} \sum_{j=0}^{h-1} e^{2 \pi i \left(\frac{c}{h}+w_0\right)j}  \right) \frac{(-2\pi  t)^a }{ a!}+O\left(t^{N+1}\right) \\ 
& = -  \sum_{a=0}^N  \mathcal{D}_{z}^{2a}\left(\frac{e^{2 \pi i d\left(\frac{c}{h \ell}+\frac{w_0}{\ell}\right)}}{e^{2 \pi i \ell \left(\frac{c}{h \ell}+\frac{w_0}{\ell} \right)}-1}\right)\frac{(-2\pi t)^a}{a!}+O\left(t^{N+1} \right).
\end{align*} 
 From this, we may conclude the claim.
\end{proof}

\indent
In order to fully remove the restriction that $|x_0|<1/4$ in Corollary \ref{4.3}, in addition to Lemma \ref{4.4},  we need the following technical result.
\begin{lemma} \label{open.cover} 
We have $\Omega=\mathbb{R} \setminus  \mathbb{Z}$, where  $$\Omega:=\left\{ w_0+\frac{c}{h} :  \frac{c}{h} \in \mathbb{Q}\setminus\{0\},  \gcd(c,h)=1 , h \geq 2, w_0 \in \mathbb R, |w_0| < \frac{1}{4 h} \right\}.$$
\end{lemma}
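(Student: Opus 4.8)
The plan is to prove the two inclusions $\Omega \subseteq \mathbb{R}\setminus\mathbb{Z}$ and $\mathbb{R}\setminus\mathbb{Z}\subseteq\Omega$ separately. The first inclusion is essentially immediate: if $x = w_0 + c/h$ with $c/h \in \mathbb{Q}\setminus\{0\}$ in lowest terms, $h\geq 2$, and $|w_0| < 1/(4h)$, then $x$ cannot be an integer. Indeed, writing $x = n \in \mathbb{Z}$ would force $w_0 = n - c/h$ to be a rational with denominator exactly $h$ (since $\gcd(c,h)=1$ and $h\geq 2$), so $|w_0| \geq 1/h > 1/(4h)$, a contradiction. Hence $\Omega \subseteq \mathbb{R}\setminus\mathbb{Z}$.

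For the reverse inclusion, I would take an arbitrary $x \in \mathbb{R}\setminus\mathbb{Z}$ and produce a representation $x = w_0 + c/h$ of the required form. The idea is to approximate $x$ well by a rational with controlled denominator: by Dirichlet's approximation theorem, for every $Q \in \mathbb{N}$ there exist integers $c, h$ with $1\leq h \leq Q$ and $|x - c/h| < 1/(hQ) \leq 1/h^2$. Set $w_0 := x - c/h$ and reduce $c/h$ to lowest terms (this only shrinks $h$, which helps). Choosing $Q$ large enough, we get $|w_0| < 1/(hQ)$, and we want this to be $< 1/(4h)$, i.e. $Q > 4$; so already $Q = 5$ essentially suffices to get $|w_0| < 1/(4h)$ from the inequality $|w_0| < 1/(hQ)$. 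The one remaining issue is that we also need $h \geq 2$ and $c/h \neq 0$: if the approximation returns $h = 1$, then $c$ is the nearest integer to $x$ and $w_0 = x - c$ with $|w_0| < 1/Q < 1/4$, which does not have the form we want (denominator $1$). This degenerate case must be handled separately.

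The fix for the $h=1$ case is the main (minor) obstacle, and I would resolve it by hand: suppose $x\in\mathbb{R}\setminus\mathbb{Z}$ lies within distance $1/4$ of some integer $n$, say $x = n + \delta$ with $0 < |\delta| < 1/4$ (the case $|\delta|=1/4$ cannot occur as it would force... actually $|\delta|$ could equal values up to but we may assume $|\delta|<1/2$ and treat $|\delta|\geq 1/4$ via the Dirichlet argument above with a genuine $h\geq 2$). For $0<|\delta|<1/4$ we can instead write $x = (n \pm 1) + (\delta \mp 1)$, but that makes $|w_0|$ larger, not smaller. The correct move is to use a \emph{denominator-$2$} rational: write $x = w_0 + c/h$ with $h = 2$, picking $c$ odd so that $c/2$ is the half-integer nearest to $x$; then $|w_0| = |x - c/2| \leq 1/4$, and since $x\notin\frac12\mathbb{Z}$ (handle $x\in\frac12\mathbb{Z}\setminus\mathbb{Z}$ trivially as $c/2$ with $w_0=0$ — wait, $w_0=0$ is allowed since the constraint is only $|w_0|<1/(4h)=1/8$, and $0<1/8$), one checks $|w_0| < 1/8$ precisely when $x$ is within $1/8$ of a half-integer. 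Covering $\mathbb{R}\setminus\mathbb{Z}$ then amounts to observing that every real number is within $1/8$ of \emph{some} rational $c/h$ with $h\in\{2,3,4,5\}$ in lowest terms and $h\geq 2$: this is a finite covering statement about how the Farey/Stern--Brocot points of bounded height fill up $[0,1]$, and it can be verified directly. In the write-up I would streamline this: combine the Dirichlet estimate (which handles all $x$ at distance $\geq$ some threshold from $\mathbb{Z}$ with a genuine $h\geq 2$) with the explicit half-integer trick (for $x$ very close to $\mathbb{Z}$), and note the two ranges overlap so that every $x\in\mathbb{R}\setminus\mathbb{Z}$ is covered. This yields $\mathbb{R}\setminus\mathbb{Z}\subseteq\Omega$, completing the proof.
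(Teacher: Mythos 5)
Your first inclusion ($\Omega \subseteq \mathbb{R}\setminus\mathbb{Z}$) is correct and matches the paper's (terse) argument: if $w_0 + c/h = n \in \mathbb{Z}$, then $w_0 = (nh-c)/h$ is a nonzero rational whose reduced denominator is $h \geq 2$, so $|w_0| \geq 1/h > 1/(4h)$.

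The reverse inclusion, however, has a genuine gap, and it sits exactly at the hard part of the lemma: points $x$ close to an integer. If $x = n+\delta$ with $\delta>0$ small, then any admissible fraction $c/h$ (reduced, $h \geq 2$) satisfies $|c/h - n| \geq 1/h$, hence $|x - c/h| \geq 1/h - \delta$, and the requirement $|x-c/h| < 1/(4h)$ forces $\delta > 3/(4h)$, i.e.\ $h > 3/(4\delta)$. So the denominators needed are unbounded as $x$ approaches $\mathbb{Z}$, and no finite list of denominators can cover a punctured neighborhood of an integer. This defeats both of your proposed fixes for the degenerate $h=1$ case: the ``half-integer trick'' only covers points within $1/8$ of an odd multiple of $1/2$, which is the opposite of the problematic region, and the asserted finite covering by reduced fractions with $h \in \{2,3,4,5\}$ is false (for $x = 0.01$ the nearest such fraction is $1/5$, at distance $0.19 > 1/8$), and even where it held it would not give the stronger bound $|w_0| < 1/(4h)$ that $\Omega$ demands. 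Plain Dirichlet approximation does not rescue this, because it only guarantees \emph{some} solution with $h \leq Q$, and near an integer that solution is $h=1$. A correct route --- presumably the paper's intended ``short calculation'' --- is to produce the large denominators explicitly: writing $x = n + \delta$ with $0 < \delta \leq 1/2$ (the case $\delta > 1/2$ is symmetric about $n+1$), the open interval $\left(\frac{3}{4\delta}, \frac{5}{4\delta}\right)$ has length $\frac{1}{2\delta} \geq 1$ and therefore contains an integer $h$, necessarily $\geq 2$; taking $c := nh+1$, which is coprime to $h$, gives $|x - c/h| = |\delta - 1/h| < 1/(4h)$. (Alternatively: for rational non-integer $x$ take $c/h = x$ itself with $w_0 = 0$, and for irrational $x$ take a continued-fraction convergent with $h \geq 5$, so that $|x - c/h| < 1/h^2 < 1/(4h)$.)
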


\begin{proof}  
A short calculation reveals that the sets $$D_{\frac{c}{h}}:=\left\{w_0 \in \mathbb R : \left|w_0 -\frac{c}{h}\right| <\frac{1}{4h }\right\}, $$ where $c/h \in \mathbb{Q}\setminus\{0\}$, cover the interval $(0,1)$, and thus, $\mathbb R \setminus \mathbb Z \subseteq \Omega.$  By considering the cases $0<w_0 < \frac{1}{4h}$ and $-\frac{1}{4h} < w_0 < 0$ separately, it is not difficult to see that $\Omega$ contains no integer.
\end{proof}

We are now able to establish the general asymptotic expansions of the functions $F_{d,\ell}$ and $G_{d,\ell}$ if $\im(z)=0$. 

\begin{corollary}  \label{wholeexp}
For $z\in \mathbb{R}$, as $t\to 0^+$, for any $N\in\mathbb N_0$, the following are true. \\
\rm{(i)}  If $z\in \frac{1}{\ell}\mathbb Z$, then we have that  
\begin{align*}
F_{d,\ell}\left(z; it\right)& =\frac{e^{2\pi i d z}}{2\ell\left(2t\right)^{\frac{1}{2}}}-e^{2\pi i d z}\sum_{a=0}^{N} \frac{ B_{2a+1}\left(\frac{d}{\ell}\right)\left(-2 \pi \ell^2 t\right)^a}{(2a+1)a!}+O\left(t^{N+1}\right), \\
G_{d,\ell}\left(z; it\right)& =  \frac{i\sin(2\pi dz)}{ \ell\left(2t\right)^{\frac{1}{2}}}-\sum_{a=0}^{N} \left(e^{2\pi idz}B_{2b+1}\left(\frac{d}{\ell}\right) - e^{-2\pi idz}B_{2a+1}\left(-\frac{d}{\ell}\right)\right)\frac{\left(-2 \pi \ell^2 t\right)^a }{a!(2a+1)} + O\left(t^{N+1}\right).
\end{align*} 
 \noindent \rm{(ii)}  If $z\in \mathbb R \setminus  \frac{1}{\ell}\mathbb Z$, then we have that 
 \begin{align*}
 F_{d,\ell}\left(z; it\right)& =  \sum_{a=0}^N \mathcal D_z^{2a} \left(\frac{e^{2 \pi i d z}}{1-\zeta^\ell}\right)\frac{(-2\pi t)^a}{a!} +O\left(t^{N+1}\right), \\
 G_{d,\ell}\left(z; it\right)& =  2i\sum_{a=0}^N\mathcal D_z^{2a} \left(\frac{\sin(2 \pi   d z)}{1-\zeta^\ell}\right) \frac{(-2\pi t)^a}{a!} +O\left(t^{N+1}\right). 
 \end{align*}
\end{corollary}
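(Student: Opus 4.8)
\textbf{Proof proposal for Corollary \ref{wholeexp}.} The plan is to reduce both parts to results already in hand---Corollary \ref{4.2} and Lemma \ref{4.4} for part (ii), and Lemma \ref{lemma0.1} together with Lemma \ref{derivative} for part (i)---using only the covering statement of Lemma \ref{open.cover} and the definition $G_{d,\ell}=F_{d,\ell}-F_{-d,\ell}$.

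For part (i), suppose $z\in\frac{1}{\ell}\mathbb Z$, say $z=j/\ell$ with $j\in\mathbb Z$; then $\zeta=e^{2\pi i z}$ satisfies $\zeta^\ell=1$, so the "$1-\zeta^\ell$" denominators in Corollary \ref{4.3} are singular and that result does not apply directly. Instead I would start from Lemma \ref{lemma0.1}, which is valid at $z=0$ (the bound $|z|<1/(4\ell)$ includes it), specialized to $\tau=it$: the $b=0$ term of the first sum gives $\frac{\Gamma(1/2)}{2(-2\pi i\ell^2(it))^{1/2}}=\frac{1}{2\ell(2t)^{1/2}}$, and the $b=0$ terms of the second sum give $-\sum_{a=0}^N\frac{(-2\pi\ell^2 t)^a}{a!}\frac{B_{2a+1}(d/\ell)}{2a+1}$, while all $b\ge 1$ terms carry a positive power of $z=0$ and vanish. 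This yields the formula for $F_{d,\ell}(0;it)$, and then the identity $F_{d,\ell}(j/\ell;\tau)=e^{2\pi i dj/\ell}F_{d,\ell}(0;\tau)$ (exactly as in the proof of Corollary \ref{4.3}, since shifting $z$ by $j/\ell$ multiplies each summand by $e^{2\pi i dj/\ell}$) gives the stated expansion for general $z\in\frac{1}{\ell}\mathbb Z$, with $e^{2\pi i dj/\ell}=e^{2\pi i dz}$. The formula for $G_{d,\ell}$ follows by subtracting the $d\mapsto -d$ version; here I would simply note $B_{2b+1}$ is a typo for $B_{2a+1}$ and that the two leading terms combine, using $\frac12(2t)^{-1/2}\ell^{-1}(e^{2\pi idz}-e^{-2\pi idz})=i\sin(2\pi dz)/(\ell(2t)^{1/2})$.

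For part (ii), fix $z\in\mathbb R\setminus\frac1\ell\mathbb Z$. By Lemma \ref{open.cover} applied to $\ell z\in\mathbb R\setminus\mathbb Z$, we may write $\ell z=w_0+c/h$ with $c/h\in\mathbb Q\setminus\{0\}$, $\gcd(c,h)=1$, and $|w_0|<\frac1{4h}$; equivalently $z=\frac{w_0}{\ell}+\frac{c}{h\ell}$. If $h\ge 2$, Lemma \ref{4.4} applies verbatim and gives the claimed expansions (noting that $\frac{\sin(2\pi dz)}{1-\zeta^\ell}=\frac{1}{2i}\big(\frac{e^{2\pi idz}}{1-\zeta^\ell}-\frac{e^{-2\pi idz}}{1-\zeta^\ell}\big)$, so the $G$-formula is again the difference of two $F$-formulas). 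The only remaining case is $h=1$, i.e.\ $\ell z=w_0+c$ with $c\in\mathbb Z$ and $|w_0|<\frac14$; then $z=\frac{w_0+c}{\ell}$ is exactly of the form treated in Corollary \ref{4.3} with $x_0=w_0$, $j=c$, and $|x_0|<1/4$, and one checks that the expansion given there coincides with $\sum_{a=0}^N\mathcal D_z^{2a}\big(\frac{e^{2\pi idz}}{1-\zeta^\ell}\big)\frac{(-2\pi t)^a}{a!}+O(t^{N+1})$: indeed Lemma \ref{derivative} rewrites $\mathcal D_z^{2a}\big(\frac{\zeta^d}{1-\zeta^\ell}\big)$ as the $\mathbb R$-expansion term minus $\frac{(2a)!}{\ell(2\pi i z)^{2a+1}}$, and since $z=(x_0+j)/\ell$ one has $\frac{(2a)!}{\ell(2\pi iz)^{2a+1}}=e^{2\pi idj/\ell}\frac{(2a)!\ell^{2a}}{(2\pi ix_0)^{2a+1}}$ only after absorbing the phase---so instead I would argue more cleanly that the error-function term in Corollary \ref{4.3} is exponentially small as $t\to0^+$ (since $\operatorname{erf}(\sqrt{\pi}ix_0/(\ell\sqrt{2t}))\to 1$ and $e^{-\pi x_0^2/(2\ell^2t)}$ decays, using $x_0\neq 0$), hence absorbed into $O(t^{N+1})$, and that the correction terms reassemble into $\mathcal D_z^{2a}(\zeta^d/(1-\zeta^\ell))$ by Lemma \ref{derivative}.

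The main obstacle is the bookkeeping in part (ii): verifying that the two superficially different closed forms---the error-function-plus-corrections shape coming from Corollary \ref{4.3}/\ref{4.2} when $h=1$, and the clean $\mathcal D_z^{2a}(\zeta^d/(1-\zeta^\ell))$ shape---actually agree, and in particular that the $\operatorname{erf}$ contribution is genuinely negligible (it is exponentially small precisely because $z\notin\frac1\ell\mathbb Z$ forces $x_0\neq 0$, which is where the hypothesis $z\in\mathbb R\setminus\frac1\ell\mathbb Z$ is used in this case). Everything else is a direct appeal to the cited lemmas plus the elementary shift identity $F_{d,\ell}((z_0+j/\ell);\tau)=e^{2\pi idj/\ell}F_{d,\ell}(z_0;\tau)$ and linearity in the definition of $G_{d,\ell}$.
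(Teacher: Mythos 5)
Your overall architecture matches the paper's: part (i) comes from the $|z|<1/(4\ell)$ expansion (the paper enters via Corollary \ref{4.3} with $x_0=0$ together with \eqref{eqn_dber0}; you enter one step earlier via Lemma \ref{lemma0.1} at $z=0$ plus the shift identity --- these are equivalent), and part (ii) comes from Lemma \ref{open.cover} followed by Lemma \ref{4.4}. You are also right that $B_{2b+1}$ in the stated $G$-expansion is a typo for $B_{2a+1}$.

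However, the step you yourself single out as ``the main obstacle'' --- the $h=1$ branch of part (ii) --- is handled incorrectly. First, that branch never arises: the set $\Omega$ in Lemma \ref{open.cover} is defined with the condition $h\geq 2$ built in, so every $\ell z\in\mathbb R\setminus\mathbb Z$ already admits a representation $w_0+c/h$ with $h\geq 2$ and $|w_0|<\frac{1}{4h}$, and Lemma \ref{4.4} applies with no residual case; this is exactly how the paper argues. Second, and more importantly, your proposed justification inside that branch is false: for real $x_0\neq 0$ the argument $\sqrt{\pi}ix_0/(\ell\sqrt{2t})$ of the error function is purely imaginary and large, and $\erf$ of a large purely imaginary argument does not tend to $1$ --- by \eqref{NIST2} it grows like $e^{\pi x_0^2/(2\ell^2t)}$ times an algebraic factor, so the product $e^{-\pi x_0^2/(2\ell^2t)}\left(1+\erf\left(\sqrt{\pi}ix_0/(\ell\sqrt{2t})\right)\right)$ is \emph{not} exponentially small. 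It is exactly the function $F(t,w)$ of Lemma \ref{2.2} with $|\Arg(w)|=\pi/2>\pi/4$, and by part (ii) of that lemma it contributes a full asymptotic power series in $t$; that series is precisely what cancels the correction terms $e^{2\pi idj/\ell}(2a)!\ell^{2a}/(2\pi ix_0)^{2a+1}$ appearing in Corollary \ref{4.3} and reassembles the clean $\mathcal D_z^{2a}\left(\zeta^d/(1-\zeta^\ell)\right)$ expansion (this is the content of the remark following Corollary \ref{3.4}). So the conclusion of your $h=1$ branch happens to be correct, but the step as written fails; the repair is simply to delete that branch and use Lemma \ref{open.cover} as stated.
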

\begin{proof} 
\ \\  \rm{(i)}  In this case, we may apply Corollary \ref{4.3} with $x_0=0$.  We first note that $\erf(0)=0$.  Next, in order to evaluate the sum on $b$ which appears, we use (\ref{eqn_dber0}) with $z\mapsto x_0, \ell \mapsto 1,$ and $d\mapsto d/\ell$.  
This gives the expression for $F_{d,\ell}$ in case (i) of Corollary \ref{wholeexp}.  The expression for $G_{d,\ell}$ immediately follows.
\\ \rm{(ii)} In this case, we have that $z\in \mathbb R \setminus \frac{1}{\ell} \mathbb Z$, hence $\ell z \in \mathbb R \setminus \mathbb Z$, thus, we may apply  Lemma \ref{open.cover}  to $\ell z$, to  deduce that we may write $ z = w_0/\ell + c/(h\ell)$ for some $c \in \mathbb Z, h\in \mathbb N$ such that $\gcd(c,h)=1$ and $h\geq 2$, and some $w_0\in\mathbb R$ satisfying $|w_0|\leq 1/(4h)$.    We then apply Lemma \ref{4.4} to obtain the expressions for $F_{d,\ell}$ and $G_{d,\ell}$ given in (ii) of Corollary \ref{wholeexp}.
\end{proof}

From Corollary \ref{wholeexp}, we deduce the following asymptotic behavior if $\im(z)=0$.

\begin{corollary}\label{cor_fgreal}  As $t\to 0^+$, the following are true.
\medskip \\
{\rm{(i)}}  If $z\in\frac{1}{\ell}\Z$, we have  that  
$$\displaystyle F_{d,\ell}(z;it)\sim\frac{\zeta^d}{2\ell(2t)^{\frac12}}.$$
\medskip \\
{\rm{(ii)}} If $z\in \mathbb R \setminus \frac{1}{\ell}\mathbb Z$, we have that  
$$ \displaystyle F_{d,\ell}(z;it)\sim\frac{\zeta^d}{1-\zeta^\ell}.$$
\medskip \\
{\rm{(iii)}}
 If $z\in\frac{1}{\ell}\Z$ and $2 d z \not \in \mathbb Z$, we have that 
$$  G_{d,\ell}(z;it)\sim\frac{ i \sin(2 \pi d z) }{\ell(2t)^{\frac12}}. $$ 
\medskip \\
{\rm{(iv)}}   If $z\in \mathbb R \setminus \frac{1}{\ell}\mathbb Z$ and $2dz \not \in \mathbb Z$, we have that  $$ G_{d,\ell}(z;it)\sim\frac{2 i \sin(2 \pi d z)}{1-\zeta^\ell}.
	$$ 
\medskip \\
{\rm{(v)}} If $z\in\frac{1}{\ell}\Z$ and $2 d z \in \mathbb Z$, we have that
$$G_{d,\ell}(z;it) \sim (-1)^{2dz+1} \frac{2d}{\ell}.$$ 
\medskip \\
{\rm{(vi)}}  If $z\in\mathbb R \setminus \frac{1}{\ell}\Z$ and $2 d z   \in \mathbb Z$, we have that
$$G_{d,\ell}(z;it) \sim (-1)^{2dz + 1} \frac{ 8\pi \ell d \zeta^\ell}{(1-\zeta^\ell)^2}  \ t.$$ 
\end{corollary}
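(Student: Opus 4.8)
The plan is to read every claimed leading term directly off the full asymptotic expansions already established in Corollary~\ref{wholeexp}, in each case identifying which summand dominates as $t\to 0^+$. For (i) and (ii) this is immediate: in (i) the $t^{-1/2}$-term $\frac{\zeta^d}{2\ell(2t)^{1/2}}$ is nonzero (since $\zeta^d\ne0$) and swamps the $O(1)$ polynomial series, while in (ii) the hypothesis $z\notin\frac1\ell\Z$ gives $\zeta^\ell\ne1$, so the $a=0$ term $\frac{\zeta^d}{1-\zeta^\ell}$ is a nonzero constant dominating the $O(t)$ remainder. Cases (iii) and (iv) for $G_{d,\ell}$ proceed identically once one observes that the hypothesis $2dz\notin\Z$ forces $\sin(2\pi dz)\ne0$, so the term displayed in Corollary~\ref{wholeexp} is genuinely the leading one.

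The substance lies in (v) and (vi), where $2dz\in\Z$ makes $\sin(2\pi dz)=0$ and the naive leading term vanishes, so one must pass to the next term. For (v) I would begin from the $G_{d,\ell}$-expansion in Corollary~\ref{wholeexp}(i) (with the evident correction $B_{2b+1}\mapsto B_{2a+1}$): the $t^{-1/2}$-term drops, and the $a=0$ term of the remaining series becomes dominant. Evaluating it needs only $B_1(x)=x-\tfrac12$ together with $e^{\pm2\pi i dz}=(-1)^{2dz}$ (valid since $2dz\in\Z$), which collapses $e^{2\pi i dz}B_1(\tfrac d\ell)-e^{-2\pi i dz}B_1(-\tfrac d\ell)$ to $(-1)^{2dz}\tfrac{2d}{\ell}$ and hence produces the claimed constant $(-1)^{2dz+1}\tfrac{2d}{\ell}$, which is nonzero provided $d\ne0$ (for $d=0$ one has $G_{d,\ell}\equiv0$ and the assertion is trivial).

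For (vi) I would instead use Corollary~\ref{wholeexp}(ii): now the $a=0$ term $\frac{2i\sin(2\pi dz)}{1-\zeta^\ell}$ vanishes, so the behavior is governed by the $a=1$ term $-4\pi i t\,\mathcal D_z^2\!\left(\frac{\sin(2\pi dz)}{1-\zeta^\ell}\right)$. Writing $g(z)=\sin(2\pi dz)$ and $h(z)=(1-\zeta^\ell)^{-1}$ and expanding $\mathcal D_z^2(gh)=(\mathcal D_z^2 g)h+2(\mathcal D_z g)(\mathcal D_z h)+g(\mathcal D_z^2 h)$, one uses that both $g$ and $\mathcal D_z^2 g=d^2 g$ vanish at the point, so only the cross term survives; substituting $\mathcal D_z g=-di\cos(2\pi dz)=-di(-1)^{2dz}$ and $\mathcal D_z h=\frac{\ell\zeta^\ell}{(1-\zeta^\ell)^2}$ then yields the stated main term $(-1)^{2dz+1}\frac{8\pi\ell d\,\zeta^\ell}{(1-\zeta^\ell)^2}\,t$, which is nonzero since $\zeta^\ell\ne1$ and $d\ne0$.

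I expect the main (indeed only) obstacle to be bookkeeping rather than ideas: in (v) and (vi) one must confirm that the term immediately following the vanishing one is itself nonzero, so that the symbol ``$\sim$'' is meaningful, and this reduces to the elementary facts $d\ne0$ and $\zeta^\ell\ne1$; the remaining work is careful but routine differentiation and evaluation of Bernoulli polynomials.
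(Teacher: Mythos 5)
Your proposal is correct and follows exactly the route the paper takes: the corollary is deduced directly from the expansions of Corollary \ref{wholeexp} by identifying the leading nonvanishing term, passing to the $a=0$ Bernoulli term in case (v) and the $a=1$ cross term of $\mathcal{D}_z^2$ applied to the product in case (vi). Your computations (including $B_1(d/\ell)-B_1(-d/\ell)=2d/\ell$, $e^{\pm 2\pi i dz}=(-1)^{2dz}$, and the surviving term $2(\mathcal{D}_z g)(\mathcal{D}_z h)$) check out and match the stated constants.
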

\begin{remark} The functions given on the right-hand sides of the displayed asymptotics in parts (v) and (vi) are  equal to zero if and only if $d=0$, in which case the functions $G_{0,\ell}$ appearing on the left-hand sides are identically equal to zero.
\end{remark}
 
\section{Asymptotic behavior of $F_{d,\ell}$ and $G_{d,\ell}$} \label{sec_fgsummary}  
We are now able to prove Theorem \ref{Fasex} and Corollary \ref{5.2}, using results established in Sections \ref{sec_aux}, \ref{sec_sec3}, and \ref{sec_zreal}.   In particular, Theorem \ref{Fasex} follows  from Lemma \ref{3.1}, Lemma \ref{3.3}, and Corollary \ref{wholeexp}.  Corollary \ref{5.2} can be concluded from Corollaries \ref{cor_fgasypos}, \ref{3.4}, and \ref{cor_fgreal}.  We  establish results analogous to Theorem \ref{Fasex} and Corollary \ref{5.2} for  the  functions $G_{d,\ell}$ in Theorem \ref{Gasex} and Corollary \ref{5.4} below.  These follow in exactly the same way, with the exception of using Corollary \ref{3.5} instead of  Corollary \ref{3.4}.  {As we have seen previously, these behavior   depend on where in $\mathbb C$ the Jacobi variable $z$ is located.  We begin with Theorem \ref{Gasex}, which gives  asymptotic expansions for the functions $G_{d,\ell}$ as $t\to 0^+$.  } 
\begin{theorem}\label{Gasex}  We have the following behavior, as $t\to 0^+$, for any $N\in\mathbb N_0.$
\begin{itemize}[leftmargin=*,align=left]
\item[\rm{(i)}]
If  $\imm(z)>0$ or  $\left(\imm(z)<0 \text{ and } |x_0|>|y_0|\right)$,  \text{ or } $\left(z\in\mathbb{R}\setminus\frac{1}{\ell}\mathbb{Z}\right)$, then 
\begin{align*}
G_{d,\ell}\left(z;it\right)= 2i\sum_{a=0}^{N}\mathcal{D}_z^{2a}\left(\frac{\sin(2\pi dz)}{1-\zeta^\ell}\right)\frac{(-2\pi t)^a}{a!} + O\left(t^{N+1}\right).\\
\end{align*}
\item[\rm{(ii)}]
If $\imm(z)<0\text{ and }|x_0|\le|y_0|$, then
\begin{align*}
  G_{d,\ell}\left(z;it\right)=& 2i\left(2\ell^2t\right)^{-\frac12}e^{-\frac{\pi z_0^2}{2\ell^2t}}\sum_{\substack{n\in\mathbb{Z}\\|n+x_0|\le|y_0|}}e^{-\frac{\pi n^2}{2\ell^2t}-\frac{\pi z_0n}{\ell t}}\sin\left(\frac{2\pi d}{\ell}(j-n)\right)
\\
&{\hspace{.6in}}+ 2i\sum_{a=0}^{N}\mathcal{D}_z^{2a}\left(\frac{\sin(2\pi dz)}{1-\zeta^\ell}\right)\frac{(-2\pi t)^a}{a!}+O\left(t^{N+1}\right). \\
\end{align*}
\item[\rm{(iii)}] If $z\in\frac{1}{\ell}\mathbb{Z}$, then 
\begin{equation*}
G_{d,\ell}(z;it)=\frac{i\sin(2\pi dz)}{\ell(2t)^{\frac{1}{2}}}-\sum_{a=0}^{N}\left(\zeta^d B_{2a+1}\left(\frac{d}{\ell}\right)-\zeta^{-d}B_{2a+1}\left(-\frac{d}{\ell}\right)\right) \frac{\left(-2\pi \ell^2t\right)^a}{a!(2a+1)}+ O\left(t^{N+1}\right).
\end{equation*}
\end{itemize}
\end{theorem}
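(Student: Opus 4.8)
The plan is to reduce Theorem \ref{Gasex} entirely to results already proven for $F_{d,\ell}$, since by definition $G_{d,\ell}(z;\tau)=F_{d,\ell}(z;\tau)-F_{-d,\ell}(z;\tau)$. The only subtlety relative to the statements for $F_{d,\ell}$ is bookkeeping: the parameter $z_0=(z_0+j)/\ell$ with $-1/2<x_0\le 1/2$ is the same for both $F_{d,\ell}$ and $F_{-d,\ell}$, so the leading exponential factors $(2\ell^2t)^{-1/2}e^{-\pi z_0^2/(2\ell^2t)}$ match and can be pulled out of the difference; what differs is the finite $\zeta$-dependent data ($\zeta^d$ versus $\zeta^{-d}$, and $e^{2\pi ijd/\ell}$ versus $e^{-2\pi ijd/\ell}$). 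I would organize the proof case by case, exactly mirroring the three-part structure of Theorem \ref{Fasex}.

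For part (i), I would simply invoke Theorem \ref{Fasex}(i) for both $F_{d,\ell}(z;it)$ and $F_{-d,\ell}(z;it)$ and subtract, obtaining
\[
G_{d,\ell}(z;it)=\sum_{a=0}^N\left(\mathcal D_z^{2a}\left(\frac{\zeta^d}{1-\zeta^\ell}\right)-\mathcal D_z^{2a}\left(\frac{\zeta^{-d}}{1-\zeta^\ell}\right)\right)\frac{(-2\pi t)^a}{a!}+O\left(t^{N+1}\right),
\]
and then recognizing $\zeta^d-\zeta^{-d}=2i\sin(2\pi dz)$ and using linearity of $\mathcal D_z^{2a}$ to pull the difference inside the operator. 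This is essentially the same manipulation already used to pass from the $F_{d,\ell}$-statement to the $G_{d,\ell}$-statement in Lemma \ref{3.1} and Lemma \ref{3.3}. For part (ii), I would subtract the two instances of Theorem \ref{Fasex}(ii); the exponentially-irrelevant pieces combine as in part (i), while the main sum becomes
\[
2i\left(2\ell^2t\right)^{-\frac12}e^{-\frac{\pi z_0^2}{2\ell^2t}}\sum_{\substack{n\in\mathbb Z\\ |n+x_0|\le|y_0|}}e^{-\frac{\pi n^2}{2\ell^2t}-\frac{\pi z_0n}{\ell^2t}}\cdot\frac{e^{2\pi ijd/\ell}e^{-2\pi ind/\ell}-e^{-2\pi ijd/\ell}e^{2\pi ind/\ell}}{2i},
\]
and the last quotient is exactly $\sin\bigl(\tfrac{2\pi d}{\ell}(j-n)\bigr)$; here I would note the (harmless) typo to be careful about — the exponent $\pi z_0 n/(\ell^2 t)$ versus the $\pi z_0 n/(\ell t)$ printed in the theorem — and match the form stated. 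For part (iii), I would use Theorem \ref{Fasex}(iii) (equivalently Corollary \ref{wholeexp}(i)) for $\pm d$, noting $z\in\frac1\ell\mathbb Z$ gives $\zeta^{\pm d}=e^{\pm 2\pi idz}$ and $\tfrac12(2\ell^2t)^{-1/2}(\zeta^d-\zeta^{-d})=i\sin(2\pi dz)/(\ell(2t)^{1/2})$, and subtract the two Bernoulli-polynomial series term by term.

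I do not expect a genuine obstacle here — the content is entirely in Theorem \ref{Fasex}, which is assumed. The one place demanding care is the verification in part (ii) that the two main sums, which a priori have different shifted lattices $\{n:|n+x_0|\le|y_0|\}$, in fact share the *same* summation set: both $F_{d,\ell}$ and $F_{-d,\ell}$ are written with the same decomposition $z=(z_0+j)/\ell$, so the constraint $|n+x_0|\le|y_0|$ is identical for both, and the difference is legitimate term-by-term. The remaining work is the trigonometric simplification of finite sums of exponentials, which is routine. Since the deduction is formally identical to the $F_{d,\ell}\to G_{d,\ell}$ passages already carried out in Section \ref{sec_sec3}, I would keep the written proof short, citing Theorem \ref{Fasex} and Corollary \ref{wholeexp} and indicating the subtraction and the elementary identity $\zeta^d-\zeta^{-d}=2i\sin(2\pi dz)$.
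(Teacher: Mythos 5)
Your proposal is correct and follows essentially the same route as the paper: the paper assembles Theorem \ref{Gasex} from Lemma \ref{3.1}, Lemma \ref{3.3}, and Corollary \ref{wholeexp}, each of which already obtains the $G_{d,\ell}$ expansion by the very subtraction $F_{d,\ell}-F_{-d,\ell}$ and the identity $\zeta^d-\zeta^{-d}=2i\sin(2\pi dz)$ that you describe, so performing the subtraction at the level of the assembled Theorem \ref{Fasex} is the same computation organized slightly differently. Your observations that the summation set $|n+x_0|\le|y_0|$ is common to both $\pm d$ instances and that the printed exponent $\pi z_0 n/(\ell t)$ in part (ii) should read $\pi z_0 n/(\ell^2 t)$ (as in Lemma \ref{3.3}) are both correct.
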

Next we give the asymptotic behavior of the functions $G_{d,\ell}$  as $t\to 0^+$.  There are seven different cases in this result, dependent on the location of $z$ in $\mathbb C$.

\begin{remark}
By using the previous theorem, we can now easily compute the asymptotic expansion of 
$$\widetilde{G}_{d,\ell}(z;\tau):=G_{d,\ell}(z;\tau)+\zeta^\ell q^{d^2}.$$
This can be used to recover several asymptotic formulas previously obtained in \cite{BerndtKim}, \cite{BM} and other papers for certain special values of $z$, $d$, and $\ell$. 
\end{remark}

\begin{corollary}\label{5.4}
We have the following behavior as $t\to 0^+$.
\begin{itemize}[leftmargin=*,align=left]
\item[\rm{(i)}]  If $\imm(z)>0$ or $(\imm(z)<0$ and $|x_0|>|y_0|)$ or $(\imm(z)<0$, $|x_0|\le|y_0|,~\ell \mid 2d,$ and $x_0\ne1/2)$ or $(\imm(z)<0$, $\ell \mid 2dj$ but $\ell\nmid 2d$, $|y_0|<1-|x_0|$, and $x_0\ne1/2)$ or 
$(z\in\mathbb{R}\setminus\frac{1}{\ell}\mathbb{Z}$ and $2dz \not\in\mathbb Z)$, 
then
$$
G_{d,\ell}\left(z;it\right)\sim\frac{2i\sin\left(2\pi dz\right)}{1-\zeta^\ell}.
$$

\item[\rm{(ii)}] If $\imm(z)<0$, $|x_0|\leq|y_0|$, $x_0\ne1/2$, and $\ell \nmid 2dj$, then
$$
G_{d,\ell}\left(z;it\right)\sim2i\left(2\ell^2t\right)^{-\frac12}\sin\left(\frac{2\pi jd}{\ell}\right)e^{-\frac{\pi z_0^2}{2\ell^2t}}.
$$
\item[\rm{(iii)}] If $\imm(z)<0,~\ell \mid 2dj$ but $\ell\nmid 2d$ and $1-|y_0|\leq|x_0|\leq|y_0|$ and $x_0\neq 1/2$, then 
	$$
	G_{d,\ell}(z;it)\sim 2i (-1)^{\frac{2dj}{\ell}} \left(2\ell^2 t\right)^{-\frac12}e^{-\frac{\pi z_0^2}{2\ell^2 t}-\frac{\pi}{2\ell^2t}+\frac{\pi\sgn(x_0)z_0}{\ell^2t}}\sgn(x_0)\sin\left(\frac{2\pi d}{\ell}\right)	
	$$

\item[\rm{(iv)}] If $\imm(z)<0$, $x_0=1/2$ and $|y_0|\geq 1/2$, then
	\begin{align*} 
	G_{d,\ell}(z;it)\sim 2\left(2\ell^2 t\right)^{-\frac12}e^{-\frac{\pi}{8\ell^2 t}+\frac{\pi y_0^2}{2\ell^2 t}}\sum_\pm \pm\cos\left(\frac{\pi}{\ell}\left(d\pm\frac{y_0}{2\ell t}\right)\right)e^{\pm\frac{\pi i(2j+1)d}{\ell}}.
	\end{align*}

\item[\rm{(v)}]
If $z\in\frac{1}{\ell}\mathbb{Z}$ with $2dz\not\in\Z$, then we have
\begin{equation*}
G_{d, \ell}(z; it)\sim	i\left(2\ell^2t\right)^{-\frac12}\sin(2\pi dz).
\end{equation*}
\item[\rm{(vi)}]
If $z\in\frac{1}{\ell}\Z$ and $2dz\in\Z$, then
$$
G_{d, \ell}(z; it)\sim (-1)^{2dz + 1}\frac{2d}{\ell}. 
$$
\item[\rm{(vii)}] If $z\in\mathbb R \setminus \frac{1}{\ell}\Z$ and $2dz\in\Z$, then 
$$G_{d,\ell}(z;it) \sim (-1)^{2dz + 1} \frac{ 8\pi \ell d \zeta^\ell}{(1-\zeta^\ell)^2}  \ t.$$
\end{itemize}
\end{corollary}

\section{Regularized characters of singlet algebra modules}\label{sec_singlet}
In this section we apply the results from  the previous sections to study asymptotic properties of characters of the $(1,p)$-singlet vertex operator algebra, $p \in \mathbb{N}_{\geq2}$, and their 
quantum dimensions.
We do not recall the definition of the $(1,p)$-singlet vertex algebra here; instead we refer the reader to \cite{CM}.  In this paper we are only interested in regularized characters of irreducible modules, whose explicit formulae we recall next.

In vertex algebra theory, it is customary to use $\ch[M](\tau)$ to denote  the character (or modified graded dimension) of a $V$-module $M$. By definition,
$$\ch[M](\tau)={\rm tr}_M q^{L(0)-\frac{c}{24}},$$
where $L(0)$ is the degree operator (acting semisimply) of $M$ and $c \in \mathbb{C}$ is the central charge.
A regularized character of $M$ is simply a function depending on a complex variable $\varepsilon$,   denoted by $\ch[M^\varepsilon](\tau)$, such that
$$\lim_{\varepsilon \to 0}  \ch[M^\varepsilon](\tau)=\ch[M](\tau).$$
Although there are many different ways to introduce a regularization, for the singlet vertex algebra this can be done canonically via resolutions in terms of Fock modules (again for details see \cite{CM}). 
One of the upshots of the regularization in \cite{CM}  is a one-to-one correspondence among irreducible modules 
and their regularized characters. 

As we already mentioned in the introduction, the $(1,p)$-singlet vertex algebra admits two types of irreducible characters: {atypical} and {typical}. 
{Here we are only interested in regularized characters. Typical (regularized) characters are given by 
\begin{equation}
\label{Fl}
\ch[F_{\lambda}^\varepsilon] (\tau):=\frac{e^{2\pi \varepsilon \left(\lambda-\frac{\alpha_0}{2}\right)}q^{\frac{1}{2} \left(\lambda-\frac{\alpha_0}{2}\right)^2}}{\eta(\tau)},
\end{equation} 
where $\lambda \in \mathbb{C}$, $\alpha_0:=\sqrt{2p}-\sqrt{2/p}$ and $\eta(\tau):=q^{1/24} \prod_{n\geq1} (1-q^n)$ is Dedekind's $\eta$-function. On the other hand, atypical characters are given by  \cite{CM}:  \begin{eqnarray}
&&  \ch [M_{r,s}^\varepsilon](\tau)  \nonumber \\
&&  \label{atypical} =\frac{1}{\eta (\tau)} \sum_{n\geq 0} \left( e^{\frac{2\pi \varepsilon}{\sqrt{2p}} \left(2pn-s-pr+2p\right)} 
 q^{\frac{1}{4p} \left(2pn -s-pr +2p\right)^2} -e^{\frac{2\pi \varepsilon}{\sqrt{2p}} \left(2pn+s-pr+2p\right)} q^{\frac{1}{4p} \left(2pn +s-pr+2p \right)^2} \right),
\end{eqnarray}
where $r \in \mathbb{Z}$, and $ 1 \leq s \leq p-1$.
We stress that in particular, $M_{1,1}$ is the $(1,p)$-singlet vertex algebra \cite{CM}. We note that \cite{CMW} used a slightly different $\varepsilon$-parametrization; $\ch [M_{r,s}^\varepsilon]$ in \cite{CM} is precisely $\ch [M_{r,s}^{-\varepsilon}]$ in \cite{CMW}.
 We normalize the atypical characters $\ch[M_{r,s}^\varepsilon]$ by defining the functions }
\[
 \mathcal{C}_{r,s} (\varepsilon ;\tau) := \eta (\tau) \operatorname{ch}\left[M_{r,s}^\varepsilon \right] (\tau).
\]
{It is not difficult to see that we may decompose the normalized characters $\mathcal C_{r,s}$ as a difference of functions defined using the Jacobi partial theta functions $F_{d,\ell}$ as follows:}
\[
\mathcal{C}_{r,s} (\varepsilon ;\tau) =  F_{2p-s-pr, 2p} \left( \frac{-i\varepsilon}{\sqrt{2p}} ;\frac{\tau}{4p} \right) -F_{2p+s-pr, 2p} \left( \frac{-i\varepsilon}{\sqrt{2p}} ;\frac{\tau}{4p} \right) .
\]

Our next results {further relate the} regularized atypical characters to some of the functions studied earlier.
\begin{lemma}\label{5.1} 
For $\varepsilon\in\mathbb{C},$ we have that  
$$
\mathcal{C}_{r,s}(\varepsilon; \tau)=-G_{s+p(r-2), 2p}\left(-\frac{i\varepsilon}{\sqrt{2p}};\frac{\tau}{4p}\right)
+\sum_{a\geq 0} \left[\mathcal{D}_z^{2a}\left(\frac{\zeta^{s+(r-2)p}\left(1-\zeta^{-2p(r-2)}\right)}{1-\zeta^{2p}}\right)\right]_{z=-\frac{i\varepsilon}{\sqrt{2p}}}\frac{\left(\frac{\pi i\tau}{2p}\right)^a}{a!}.
$$
\end{lemma}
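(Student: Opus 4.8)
The plan is to start from the decomposition of $\mathcal{C}_{r,s}$ as a difference of two partial theta functions already recorded in the excerpt, namely
\[
\mathcal{C}_{r,s}(\varepsilon;\tau) = F_{2p-s-pr,\,2p}\!\left(\tfrac{-i\varepsilon}{\sqrt{2p}};\tfrac{\tau}{4p}\right) - F_{2p+s-pr,\,2p}\!\left(\tfrac{-i\varepsilon}{\sqrt{2p}};\tfrac{\tau}{4p}\right),
\]
and to rewrite it in terms of the false theta function $G_{d,\ell}$ with $\ell = 2p$ and $d = s+p(r-2)$. Note $2p-s-pr = -d$ and $2p+s-pr = d + 2(2p-s-pr)$... wait — more carefully, $2p-s-pr = -(s+p(r-2)) = -d$, while $2p+s-pr = s - p(r-2) = -d + 2s$. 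These are not $\pm d$, so the two partial theta functions in the decomposition are not directly $F_{-d,\ell}$ and $F_{d,\ell}$; the second index $2p+s-pr$ differs from $d = s+p(r-2)$ by $2p + s - pr - (s + pr - 2p) = 4p - 2pr = -2p(r-2) = -\ell(r-2)$, i.e. it is $d - \ell(r-2)$. Wait, let me recompute: $d - (2p+s-pr) = (s+pr-2p) - (2p+s-pr) = 2pr - 4p = 2p(r-2) = \ell(r-2)$. So $2p+s-pr = d - \ell(r-2)$, a shift of $d$ by an integer multiple of $\ell$.

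So the first step is: identify $F_{2p-s-pr,2p} = F_{-d,2p}$ exactly, and identify $F_{2p+s-pr,2p} = F_{d-\ell(r-2),\,2p}$. Then apply Lemma~\ref{lem_fshift} (the shifting property) with $m = -(r-2) = 2-r$ to write
\[
F_{d-\ell(r-2),\,\ell}(z;\tau) = F_{d,\ell}(z;\tau) - \sum_{a\geq 0}\mathcal{D}_z^{2a}\!\left(\frac{\zeta^{d}(1-\zeta^{-\ell(r-2)})}{1-\zeta^{\ell}}\right)\frac{(2\pi i\tau)^a}{a!}.
\]
Subtracting, the leading partial theta functions combine as $F_{-d,\ell} - F_{d,\ell} = -G_{d,\ell}$ by definition of $G_{d,\ell}$, and the correction term survives with a plus sign. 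Substituting $z = -i\varepsilon/\sqrt{2p}$, $\tau \mapsto \tau/(4p)$ (so that $2\pi i\tau$ becomes $2\pi i\cdot\tau/(4p) = \pi i\tau/(2p)$), $\ell = 2p$, $d = s+p(r-2)$ yields exactly the claimed formula.

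The only genuine obstacle is bookkeeping with the indices and signs — making sure that $2p-s-pr$, $2p+s-pr$, $s+p(r-2)$, and $-\ell(r-2)$ line up correctly, that the parametrization of $\zeta$ and $q$ under $\tau \mapsto \tau/(4p)$ produces the stated $(\pi i\tau/2p)^a/a!$ factor, and that the convergence of the infinite sum in $a$ is not an issue (it converges since $\mathcal{D}_z^{2a}$ of the rational trigonometric expression grows at most geometrically in $a$, exactly as in the proof of Lemma~\ref{lem_fshift}). A direct verification of these substitutions, together with citing Lemma~\ref{lem_fshift} and the definitions of $G_{d,\ell}$ and $\mathcal{C}_{r,s}$, completes the argument; there is no analytic content beyond what is already established.
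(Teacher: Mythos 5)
Your proposal is correct and follows exactly the paper's route: the paper's proof of Lemma \ref{5.1} is the single line ``apply Lemma \ref{lem_fshift},'' and your argument is precisely that application, with the index bookkeeping ($2p-s-pr=-d$, $2p+s-pr=d+\ell(2-r)$ with $d=s+p(r-2)$, $\ell=2p$) and the substitution $\tau\mapsto\tau/(4p)$ carried out correctly.
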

\begin{proof}
The claim follows by applying Lemma \ref{lem_fshift}. 
\end{proof}

For the remainder of this section, as introduced in the introduction,  we write $\varepsilon=(\varepsilon_0+ik)/\sqrt{2p}$, with $k \in \mathbb{Z}$, $\varepsilon_0 = u_0 + i v_0$ with $u_0, v_0 \in \mathbb R,$ and $-1/2 < v_0 \leq 1/2 $.
Using Lemma \ref{5.1} and results from Section \ref{sec_fgsummary}, we establish the asymptotic expansions of the functions $\mathcal C_{r,s}$ in Theorem \ref{6.2}.
\begin{theorem}\label{6.2}  We have the following behavior,  as $t\to 0^+$, for any $N\in\mathbb N_0$.
\begin{itemize}[leftmargin=*,align=left]
\item[\rm{(i)}] If $\re(\varepsilon)<0$ or $(\re(\varepsilon)>0$ and $|v_0|>|u_0|)$ or $(u_0=0$ and $v_0\neq 0),$
then 
$$
\mathcal{C}_{r,s}(\varepsilon;it)=\sum_{a=0}^N \mathcal{D}_\varepsilon^{2a}\left(\frac{e^{\pi\sqrt{2p}(1-r)\varepsilon}\sinh\left(\sqrt{\frac{2}{p}}\pi s\varepsilon\right)}{\sinh\left(\sqrt{2p}\pi\varepsilon\right)}\right) \frac{(\pi t)^a}{a!}+ O\left(t^{N+1}\right).
$$
\item[\rm{(ii)}] If $\re(\varepsilon)>0$, and $|v_0| \leq |u_0|$, then  
\begin{multline*}
\mathcal{C}_{r,s}(\varepsilon;it)= -2i(2p t)^{-\frac12} e^{\frac{\pi\varepsilon_0^2}{2p t}}\sum_{\substack{n\in\mathbb{Z} \\ |n+v_0|\le|u_0|}}(-1)^{r(k+n)}\sin\left(\frac{\pi s(k-n)}{p}\right)e^{-\frac{\pi n^2}{2p t}+\frac{2\pi i\varepsilon_0 n}{t}}\\
+\sum_{a=0}^N\mathcal{D}_\varepsilon^{2a}\left(\frac{e^{\pi\sqrt{2p}(1-r)\varepsilon}\sinh\left(\sqrt{\frac{2}{p}}\pi s\varepsilon\right)}{\sinh\left(\sqrt{2p}\pi\varepsilon\right)}\right)\frac{(\pi t)^a}{a!} + O\left(t^{N+1}\right).
\end{multline*}
\item[\rm{(iii)}] If $\varepsilon_0 = 0$, then 
\begin{multline*}
 \mathcal{C}_{r,s}(\varepsilon;it) = - i(-1)^{rk} \sin \left( \frac{\pi ks}{p}\right)(2pt)^{-\frac{1}{2}} \\ + {(-1)^{kr}}\sum_{a=0}^N {\left(e^{\frac{\pi i s k}{p}}   B_{2a+1} \left( \frac{s+p(r-2)}{2p} \right) - e^{-\frac{\pi i s k}{p}}  B_{2a+1} \left( -\frac{s+p(r-2)}{2p} \right)\right)}\frac{(-2\pi pt)^a}{{(2a+1)}a!} \\
 - \sum_{a= 0}^N \mathcal{D}_\varepsilon^{2a} 
\left(  e^{2 \pi \varepsilon \frac{(s-p)}{\sqrt{2p}}}   \frac{\sinh(\sqrt{2p} (r-2)\pi \varepsilon)}{ \sinh(\sqrt{2p} \pi \varepsilon) }\right)\frac{(\pi t)^a}{a!} + O\left(t^{N+1}\right).
\end{multline*} 
\end{itemize}
\end{theorem}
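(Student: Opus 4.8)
The plan is to derive Theorem \ref{6.2} by feeding Lemma \ref{5.1} into the asymptotic expansions for $G_{d,\ell}$ established in Section \ref{sec_fgsummary} (Theorem \ref{Gasex}). Set $d:=s+p(r-2)$, $\ell:=2p$, and $z:=-i\varepsilon/\sqrt{2p}$; Lemma \ref{5.1} then expresses $\mathcal C_{r,s}(\varepsilon;it)$ as $-G_{d,\ell}(z;it/(4p))$ plus the correction term $\sum_{a\geq 0}[\mathcal D_z^{2a}H(\zeta)]_z\,(\pi i(it)/(2p))^a/a!$, where $H(\zeta):=\zeta^{s+(r-2)p}(1-\zeta^{-2p(r-2)})/(1-\zeta^{2p})$. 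By Lemma \ref{lem_fshift} this correction term is, up to sign, a finite $q$-series, hence an entire function of $\tau$, so truncating it at order $N$ costs only $O(t^{N+1})$. Two reductions are used throughout: since $z=-i\varepsilon/\sqrt{2p}$ one has $\mathcal D_z=i\sqrt{2p}\,\mathcal D_\varepsilon$, so $\mathcal D_z^{2a}=(-2p)^a\mathcal D_\varepsilon^{2a}$; and the modular variable of $G$ is $it/(4p)$, so the powers of $t$ in the $G$-expansions are $(-2\pi t/(4p))^a=(-\pi t/(2p))^a$ (the correction term contributes the same $(-\pi t/(2p))^a$), and multiplying by $(-2p)^a$ turns these into $(\pi t)^a$, as in the statement.

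Next I would locate $z$ in the plane. Writing $\varepsilon=(\varepsilon_0+ik)/\sqrt{2p}$ with $\varepsilon_0=u_0+iv_0$, a short computation gives $z=(k+v_0-iu_0)/(2p)$, so in the parametrization $z=(z_0+j)/\ell$ of Section \ref{sec_sec3} we have $j=k$, $z_0=v_0-iu_0$, hence $x_0=v_0$ and $y_0=-u_0$. Consequently $\re(\varepsilon)<0\iff\im(z)>0$; $\re(\varepsilon)>0\iff\im(z)<0$; $|v_0|>|u_0|\iff|x_0|>|y_0|$; the region $\{u_0=0,\ v_0\neq0\}$ is exactly $\{z\in\mathbb R\setminus\frac1\ell\mathbb Z\}$; and $\{\varepsilon_0=0\}$ is exactly $\{z\in\frac1\ell\mathbb Z\}$. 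Therefore parts (i), (ii), (iii) of Theorem \ref{6.2} correspond respectively to parts (i), (ii), (iii) of Theorem \ref{Gasex}, which I apply.

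For part (i) and the polynomial part of (ii), Theorem \ref{Gasex} contributes $-2i\sum_a\mathcal D_z^{2a}(\sin(2\pi dz)/(1-\zeta^\ell))(\cdots)$. Adding the Lemma \ref{5.1} correction $H(\zeta)$ and using $\sin(2\pi dz)=(\zeta^d-\zeta^{-d})/(2i)$ together with $1-\zeta^{2p}=-\zeta^p(\zeta^p-\zeta^{-p})$, the two pieces telescope to $\zeta^{-p(r-1)}(\zeta^s-\zeta^{-s})/(\zeta^p-\zeta^{-p})$, which under $\zeta=e^{2\pi\varepsilon/\sqrt{2p}}$ equals $e^{\pi\sqrt{2p}(1-r)\varepsilon}\sinh(\sqrt{2/p}\,\pi s\varepsilon)/\sinh(\sqrt{2p}\,\pi\varepsilon)$; this is the common polynomial summand in (i) and (ii). The extra exponentially large sum in Theorem \ref{Gasex}(ii) is rewritten using $\varepsilon_0^2=-z_0^2$, $-z_0=i\varepsilon_0$, and $\sin(\frac{2\pi d}{2p}(k-n))=(-1)^{r(k+n)}\sin(\frac{\pi s(k-n)}{p})$ to produce the displayed term in (ii). For part (iii) (where $\varepsilon=ik/\sqrt{2p}$ is fixed and the expansion is in $t$ alone), Theorem \ref{Gasex}(iii) gives the singular term $\frac{i\sin(2\pi dz)}{\ell(2t/(4p))^{1/2}}=(-1)^{rk} i\sin(\frac{\pi ks}{p})(2pt)^{-1/2}$ and the Bernoulli sum, simplified via $\zeta^{\pm d}|_{z=k/(2p)}=(-1)^{rk}e^{\pm\pi iks/p}$; the correction term $\sum_a[\mathcal D_z^{2a}H(\zeta)](\cdots)$ is handled through the identity $H(\zeta)=-e^{2\pi\varepsilon(s-p)/\sqrt{2p}}\sinh(\sqrt{2p}(r-2)\pi\varepsilon)/\sinh(\sqrt{2p}\,\pi\varepsilon)$, noting that $H$ has only a removable singularity at $\varepsilon=ik/\sqrt{2p}$ (it is in fact a Laurent polynomial in $\zeta$), so this identity and the claimed formula with $\mathcal D_\varepsilon^{2a}$ evaluated at $\varepsilon=ik/\sqrt{2p}$ make sense.

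All individual steps are routine once this scaffolding is in place; the main obstacle is bookkeeping — tracking the substitution $(\ell,j,z_0,\tau)=(2p,k,v_0-iu_0,it/(4p))$, the passage $\mathcal D_z\mapsto\mathcal D_\varepsilon$ with its factor $(-2p)^a$, and above all the exponential-to-hyperbolic algebra: the telescoping that fuses the $-G$ polynomial part with the Lemma \ref{5.1} correction into a single $\sinh$-quotient in cases (i) and (ii), and the verification in case (iii) that the correction term, arising from a function singular at $\zeta^{2p}=1$, nonetheless extends analytically to the required value.
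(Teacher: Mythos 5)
Your proposal is correct and follows essentially the same route as the paper: substitute $z=-i\varepsilon/\sqrt{2p}$, $\tau\mapsto\tau/(4p)$ into Lemma \ref{5.1} and Theorem \ref{Gasex}, translate the regions via $(\ell,j,z_0)=(2p,k,-i\varepsilon_0)$, and simplify the resulting expressions. Your telescoping of the $-G$ polynomial part with the Lemma \ref{5.1} correction into the single $\sinh$-quotient, and your treatment of case (iii), supply exactly the algebra the paper's terse proof leaves implicit.
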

\begin{proof}
We use Lemma \ref{5.1} and Theorem \ref{Gasex} with $z=- i\varepsilon / \sqrt{2p}$,  and $\tau\mapsto \tau/(4p)$.  Recalling that $z=(z_0+j)/\ell$ and $\varepsilon=(\varepsilon_0 + ik)/\sqrt{2p}$, we thus take 
$\ell  = 2p, \ j=k, \ z_0 = -i\varepsilon_0, \ x_0=v_0, \text{ and } y_0=-u_0.$ The conditions from Theorem \ref{Gasex} then translate as follows:

\begin{align*}
\mathrm{Im}(z)>0 \Leftrightarrow   \re(\varepsilon)<0,\quad |x_0|>|y_0| \Leftrightarrow  |v_0|> |u_0|,\quad 
 z\in\R\setminus \tfrac{1}{\ell}\Z  \Leftrightarrow  u_0=0 \ {\rm and } \  v_0 \neq 0.\end{align*}

Parts (i) and (ii) follow by combining Lemma \ref{5.1} and Theorem \ref{Gasex} (i) and (ii), respectively.  To prove (iii), we proceed similarly and combine 
Lemma \ref{5.1} and Theorem \ref{Gasex} (iii). The first term arising from (iii) of Theorem \ref{Gasex}  simplifies to be the first term of the statement. For the second term arising from (iii) of Lemma \ref{5.1} we use a direct substitution. Finally the remaining term from Lemma \ref{5.1} yields the third term after a change of variables.
\end{proof}

\begin{remark}
Using the fact that $\mathcal{C}_{r,s} = F_{2p-s-pr,2p} - F_{2p+s-pr,2p}$, one may alternatively establish asymptotic results for the functions $\mathcal{C}_{r,s}$ using the asymptotic results for the functions $F_{d,\ell}$ obtained in the previous sections.
\end{remark}

\begin{remark} Define $\mathcal{C}^{(m)}_{r,s}(\varepsilon;\tau):=\mathcal{D}_{\varepsilon}^m \left( \mathcal{C}_{r,s}(\varepsilon;\tau) \right)$.
For $m=1$, these and related ``weight $3/2$" false theta functions were studied in \cite{BM,CMW} in connection to atypical characters of the $(p,p')$-singlet vertex algebras. Their asymptotic expansion can be computed from the previous theorem by differentiating the asymptotic expansion in (i)-(iii) term by term.
\end{remark}

\indent
Using Theorem \ref{6.2} as well as some earlier results, we establish the asymptotic behavior of the normalized characters $\mathcal C_{r,s}$ in Corollary \ref{6.3}. Recall that $1\leq s\leq p-1$; for this reason, if we encounter the hypothesis  $p\nmid s$ in the proof of Corollary \ref{6.3} below, we refrain from writing it down.  Similarly, $p\mid s$ can not occur, so if tabulating previous results to formulate Corollary \ref{6.3} we (must) omit such cases.
\begin{corollary} \label{6.3}
We have the following behavior, as $t\rightarrow 0^+.$ 
\begin{itemize}[leftmargin=*,align=left] 
\item[\rm{(i)}]  If $\ree(\varepsilon)<0$  or $(\ree(\varepsilon)>0 \text{ and } |v_0|>|u_0|)$ or $(\ree(\varepsilon)>0, p \mid ks, ~|u_0| <1 -|v_0|, \text{ and }  v_0 \ne1/2)$ or $(u_0 = 0, v_0\neq 0,\text{ and }s(v_0+k)/p\not\in \mathbb Z)$, 
then
$$
\mathcal{C}_{r,s}\left(\varepsilon;it\right)\sim e^{\pi \varepsilon \sqrt{2p}(1-r)} \frac{\sinh\left(\frac{\sqrt{2}\pi s\varepsilon}{\sqrt{p}}\right)}{\sinh\left(\sqrt{2p}\pi\varepsilon\right)}.
$$

\item[\rm{(ii)}] If $\ree(\varepsilon)>0$, $|v_0| \leq |u_0|,~v_0\ne1/2$, and $p\nmid ks$, then
$$
\mathcal{C}_{r,s}\left(\varepsilon;it\right)\sim - 2i(-1)^{kr} (2pt)^{-\frac12}\sin\left(\frac{\pi ks}{p}\right)e^{\frac{\pi \varepsilon_0^2}{2p t}}.
$$
\item[\rm{(iii)}] If $\ree(\varepsilon)>0,~p \mid ks,~1-|u_0| \leq |v_0|\leq |u_0|,\text{ and }v_0\ne1/2$, then $$ 
\mathcal{C}_{r,s}\left(\varepsilon;it\right)\sim  - 2i (-1)^{r(k+1)+\frac{sk}{p}} (2pt)^{-\frac12}e^{\frac{\pi\varepsilon_0^2}{2p t}-\frac{\pi}{2p t}-\frac{\pi i\sgn\left(v_0\right)\varepsilon_0}{p t}} \sgn(v_0)\sin\left(\frac{\pi s}{p}\right).$$
\item[\rm{(iv)}] If $\ree(\varepsilon)  >0$, $v_0 =1/2$, and $|u_0| \geq 1/2$, then 
$$
\mathcal{C}_{r,s} (\varepsilon ;it) \sim - 2(2pt)^{-\frac12} e^{-\frac{\pi}{8pt} + \frac{\pi u_0^2}{2pt}} \sum_\pm \pm\cos\left(\frac{\pi}{2p}\left(s+pr\mp\frac{u_0}{t}\right)\right)e^{\pm\frac{\pi i(2k+1)(s+pr)}{2p}}.
$$
\item[\rm{(v)}] If  $\varepsilon_0=0$ and $p \nmid ks$, then
\[
\mathcal{C}_{r,s} (\varepsilon ; it) \sim -i (-1)^{kr}  (2p t)^{-\frac12} \sin \left(\frac{\pi ks}{p} \right).
\]    
\item[\rm{(vi)}]    If   $\varepsilon_0=0$ and $p \mid ks$, then 
\[
\mathcal{C}_{r,s} (\varepsilon ; it) \sim  (-1)^{kr+\frac{ks}{p}}\frac{s}{p}.
\]  
\item[\rm{(vii)}]  If  $u_0=0,v_0 \neq 0 $, and $s(v_0+k)/p\in \mathbb Z,$ then
\[
\mathcal{C}_{r,s} (\varepsilon ; it) \sim 2\pi s (-1)^{\frac{s(v_0+k)}{p} +kr}  e^{-\pi i r v_0} \frac{\left(r-1-i\cot\left(\pi v_0  \right)\right)}{1-e^{-2\pi i v_0}} \ t.
\]
\end{itemize}
\end{corollary}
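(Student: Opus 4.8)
The plan is to derive Corollary \ref{6.3} as a direct consequence of Theorem \ref{6.2} together with Lemma \ref{5.1}, by extracting the dominant asymptotic term in each of the seven regimes and simplifying. The overall structure mirrors the passage from Theorem \ref{Gasex} to Corollary \ref{5.4}: once the full asymptotic expansion is in hand, one only needs to identify which term governs the behavior as $t\to 0^+$, and then check that it does not accidentally vanish. In most cases the leading term comes from the $a=0$ summand of the polynomial-in-$t$ part, i.e. from the value at $z=-i\varepsilon/\sqrt{2p}$ of $e^{\pi\sqrt{2p}(1-r)\varepsilon}\sinh(\sqrt{2/p}\,\pi s\varepsilon)/\sinh(\sqrt{2p}\,\pi\varepsilon)$; this yields part (i). When $\ree(\varepsilon)>0$ and $|v_0|\le|u_0|$, the exponentially large prefactor $(2pt)^{-1/2}e^{\pi\varepsilon_0^2/(2pt)}$ in Theorem \ref{6.2}(ii) dominates, provided its coefficient does not vanish, and one picks out the $n=0$ term of the finite sum (and also $n=-1$ when $v_0=1/2$), which after trigonometric simplification gives parts (ii), (iii), (iv).

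First I would treat part (i): apply Theorem \ref{6.2}(i), observe that the $a=0$ term is exactly the claimed right-hand side, and verify it is the true leading term by noting that under the stated hypotheses it is nonzero — here the side condition $p\mid ks$ (for the $\ree(\varepsilon)>0$ subcase) and $s(v_0+k)/p\notin\mathbb Z$ (for the $u_0=0$ subcase) are precisely what guarantees $\sinh(\sqrt{2/p}\,\pi s\varepsilon)\neq 0$ at the relevant point, or more precisely that the correction discussed below does not cancel it. Then for parts (ii)–(iv) I would start from Theorem \ref{6.2}(ii): the coefficient $\sum_{|n+v_0|\le|u_0|}(-1)^{r(k+n)}\sin(\pi s(k-n)/p)e^{-\pi n^2/(2pt)+2\pi i\varepsilon_0 n/t}$ has its dominant contribution from $n=0$ when $|u_0|<1$, giving $(-1)^{rk}\sin(\pi ks/p)$, which is nonzero exactly when $p\nmid ks$ — this is part (ii). When $p\mid ks$ the $n=0$ term vanishes and one must go to $n=\pm 1$; the condition $|v_0|\le|u_0|$ forces exactly one of $n=-1$ (if $v_0\ge 0$, i.e. $\sgn(v_0)$) to satisfy $|n+v_0|\le|u_0|$ once $1-|u_0|\le|v_0|$, producing the shifted exponential and the $\sin(\pi s/p)$ in part (iii); the sign bookkeeping $(-1)^{r(k+1)+sk/p}$ comes from evaluating $(-1)^{r(k-\sgn(v_0))}\sin(\pi s(k+\sgn(v_0))/p)$ using $p\mid ks$. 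For part (iv), $v_0=1/2$ means both $n=0$ and $n=-1$ lie on the boundary $|n+v_0|=|u_0|$ (since $|u_0|\ge 1/2$), and combining their contributions and rewriting in terms of $\cos$ and the variable $s+pr$ gives the $\sum_\pm$ expression.

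Next I would handle parts (v)–(vii), all of which fall under $\varepsilon_0=0$ (equivalently $u_0=0$), using Theorem \ref{6.2}(iii) for (v) and (vi). When $p\nmid ks$, the term $-i(-1)^{rk}\sin(\pi ks/p)(2pt)^{-1/2}$ is the unique negative power of $t$ and is nonzero, giving part (v). When $p\mid ks$, that term vanishes, and the leading behavior is the constant $a=0$ term: one combines the $a=0$ Bernoulli term $(-1)^{kr}(e^{\pi isk/p}B_1((s+p(r-2))/(2p)) - e^{-\pi isk/p}B_1(-(s+p(r-2))/(2p)))$ — using $B_1(x)=x-1/2$ — with the $a=0$ value of the last term, and after simplification (using $e^{\pm\pi isk/p}=\pm 1$-type reductions coming from $p\mid ks$, via $e^{\pi isk/p}=(-1)^{sk/p}$) one obtains $(-1)^{kr+ks/p}s/p$, which is part (vi). Part (vii) is the subtlest: it covers $u_0=0$, $v_0\neq 0$ (so $z\in\mathbb R\setminus\frac1\ell\mathbb Z$) but with $s(v_0+k)/p\in\mathbb Z$, which is the regime where the leading term of part (i) vanishes identically because $\sinh(\sqrt{2/p}\,\pi s\varepsilon)=0$ at the relevant argument; here one must extract the $a=1$ term (the $\mathcal D_\varepsilon^2$ term), so the answer is $O(t)$ with an explicit coefficient involving $\cot(\pi v_0)$ — this requires differentiating the ratio $e^{\pi\sqrt{2p}(1-r)\varepsilon}\sinh(\sqrt{2/p}\,\pi s\varepsilon)/\sinh(\sqrt{2p}\,\pi\varepsilon)$ twice, evaluating at a zero of the numerator, and simplifying.

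The main obstacle I expect is part (vii), and more generally the careful vanishing analysis needed to decide, in each regime, \emph{which} term is actually dominant — this is where Stokes' phenomenon forces case splitting. In part (vii) one is computing a second-order Taylor coefficient of a meromorphic function at a point where the naive leading term vanishes, so one needs to differentiate through the $\sinh$ ratio and correctly track the $e^{-\pi i rv_0}$ and $\cot(\pi v_0)$ factors; analogously, in parts (iii), (iv), and (vi) the cancellation of the naive leading term (caused by $p\mid ks$ or $v_0=1/2$ boundary effects) must be verified and the next term isolated, which is purely a bookkeeping challenge but an error-prone one. A secondary concern is consistency with Remark (i) following Theorem \ref{qdim}, which asserts parts (i)–(iii) of \emph{that} theorem agree with \cite{CMW}; since the quantum dimensions in Theorem \ref{qdim} will be computed as ratios $\mathcal C_{r,s}/\mathcal C_{1,1}$ (roughly) using precisely these asymptotics, one should sanity-check a few special values (e.g. $r$ odd, $s=1$) against the remark to make sure the sign conventions are internally consistent.
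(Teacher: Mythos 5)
Your proposal is correct and follows essentially the same route as the paper: combine Lemma \ref{5.1} and Theorem \ref{6.2} (equivalently the $F_{d,\ell}$/$G_{d,\ell}$ asymptotics of Corollaries \ref{5.2} and \ref{5.4}), extract the dominant term in each regime, and in the degenerate cases (parts (iii), (vi), (vii)) pass to the next nonvanishing term — in particular your identification of part (vii) with the $a=1$ term of Theorem \ref{6.2}(i) is exactly the paper's argument. The only quibbles are cosmetic (e.g.\ in (ii) the $n=0$ term dominates because $|n+v_0|$ is minimized there, independent of any condition $|u_0|<1$), and do not affect the proof.
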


\begin{proof}  
\ \\  (i)  Under the first, second, and fourth set of hypotheses given, we use Theorem \ref{6.2}.  Under the third set of hypotheses given, we apply  Lemma \ref{5.1} and appeal to the fourth set of hypotheses in Corollary \ref{5.4} (i).  The main term in the asymptotic expansion then follows from Theorem \ref{6.2}, taking $a=0$.\\
(ii)   Corollary \ref{5.2}  directly yields  the claim.\\
\noindent (iii)  The claim follows from Corollary \ref{5.2}, after simplifying.  \\
{\noindent (iv) and (v)  follow from Corollary \ref{5.4}, Lemma \ref{5.1}, and Theorem \ref{6.2}.} \\
\noindent (vi)  The claim follows directly from Corollary 5.2 and Lemma 6.1 since 
$$
\lim_{t\to 0}\frac{1-e^{-2pt(r-2)}}{1-e^{2pt}}=(2-r).
$$
{\noindent (vii) The claim follows from Theorem \ref{6.2} (i), using the $a=1$ term of the sum given.}
\end{proof} 
\indent
We finally prove Theorem \ref{qdim}, which establishes the ($\varepsilon$-regularized) quantum dimensions of the singlet algebra modules. Recall from (\ref{quantum-dim}), that  
\begin{equation*}
{\rm qdim}[M^\varepsilon_{r,s}]=\lim_{t \to 0^+} \frac{\ch[M_{r,s}^\varepsilon] (it)}{\ch[M_{1,1}^\varepsilon] (it)} \quad\qquad\text{and}\qquad\quad
{\rm qdim}[F^\varepsilon_\lambda]=\lim_{t \to 0^+} \frac{\ch[F_\lambda^\varepsilon] (it)}{\ch[M_{1,1}^\varepsilon] (it)}.
\end{equation*} 
\begin{proof}[Proof of Theorem \ref{qdim}] For ${\rm qdim}[M^\varepsilon_{r,s}], $ all of the statements follow from Corollary \ref{6.3} in a straightforward manner, except for certain parts of (i), (ii), and (iv),   which we now elaborate upon.

For part (i), we first note that in establishing (\ref{qdimM13i}) for $\text{qdim}[M^\varepsilon_{r,s}]$ in the case $(\ree(\varepsilon)>0, p|k, |u_0|<1-|v_0|, \text{and } v_0\neq 1/2)$, we apply Corollary \ref{6.3} (i) twice, using that $p \mid k$ implies $p \mid ks$, to establish the asymptotic behaviors of $\mathcal C_{r,s}$ and $\mathcal C_{1,1}$, from which the result follows  in this case.   Next, we consider the two cases $(\operatorname{Re}(\varepsilon)>0,$ and $| v_0| >| u_0|),$ and $\operatorname{Re}(\varepsilon)<0$.  The claimed results for $\text{qdim}[M^\varepsilon_{r,s}]$ in these cases again follow by applying Corollary \ref{6.3} (i) twice, once to 
$\mathcal C_{r,s}$ and once to $\mathcal C_{1,1}$. Turning to the last set of hypotheses $(u_0=0$ and $v_0\neq 0)$, the proof splits into two cases:  $s(k+v_0)/p \not \in \mathbb Z$ and $s(k+v_0)/p \in \mathbb Z$.  In the former case, we again apply Corollary \ref{6.3} (i)  to establish the asymptotic behaviors of $\mathcal C_{r,s}$ and $\mathcal C_{1,1}$.  For this we additionally require that  $(v_0+k)/p\not \in \mathbb Z$, but since $0<\vert v_0\vert \leq 1/2$, this condition always holds.    In the latter case $(s(k+v_0)/p \in \mathbb Z)$ we apply Corollary \ref{6.3} (vii) for $s$, and Corollary \ref{6.3} (i) for $s=1,$ to establish the asymptotic behaviors of $\mathcal C_{r,s}$ and $\mathcal C_{1,1}$ respectively.  We find that $\text{qdim}[M^\varepsilon_{r,s}]=0$ in this case, which agrees with   (\ref{qdimM13i}) under the hypotheses given.   

 To establish part (ii) for $\text{qdim}[M_{r,s}^\varepsilon]$,  we distinguish three subcases of the hypotheses $(\ree(\varepsilon)>0$, $p\nmid k$,  $|v_0| \leq |u_0|,$ and $v_0\ne1/2)$. 
Firstly,  if we additionally have that $p\nmid ks$, then we may apply Corollary \ref{6.3} (ii) twice to obtain the claim. Next, if in addition to the original hypotheses we have that $p\mid ks$ and $|u_0|<1-|v_0|$, then we apply Corollary \ref{6.3} (i) for $s$ and part (ii) for $s=1$ yielding $\operatorname{qdim}\left[M_{r,s}^\varepsilon\right]=0$ which is compatible with the claimed formula in (ii) in this case. Finally, if we additionally have that $|u_0|>1-|v_0|$, we apply Corollary \ref{6.3} (iii), and Corollary \ref{6.3} (ii) for $s=1$, to establish the asymptotic behaviors of $\mathcal C_{r,s}$ and $\mathcal C_{1,1}$ respectively, and again find that $\text{qdim}[M^\varepsilon_{r,s}]=0$ under the hypotheses given, as claimed.   

To prove (iv), we use Corollary \ref{6.3} (iv) to obtain
\begin{equation}\label{quotient}
\frac{\mathcal{C}_{r,s}(\varepsilon; i t)}{   \mathcal{C}_{1,1}(\varepsilon; i t)}\sim 
\frac{ \sum_\pm \pm\cos \left( \frac{\pi}{2p} \left( s+pr \mp\frac{u_0}{2pt} \right) \right) e^{\pm\frac{\pi i (2k+1) \left(s+pr \right)}{2p}}}{
	\sum_{\pm} \pm\cos \left( \frac{\pi}{2p} \left( 1+p \mp\frac{u_0}{2pt} \right) \right) e^{\pm\frac{\pi i (2k+1) \left(1+p \right)}{2p}}}.
\end{equation}
With $\alpha:=e^{\frac{\pi i (2k+1) \left(s+pr \right)}{2p}}$, $\beta:=e^{\frac{\pi i (2k+1) \left(1+p \right)}{2p}}$,
$a:=\pi \frac{s+pr}{2p}$, $b:=\pi \frac{1+p}{2p}$, and $T:= \frac{-u_0 \pi}{4p^2 t}$ (so that $T \rightarrow \infty$), \eqref{quotient} equals
\begin{equation*}
\frac{\sum_{\pm}\pm\cos(a\pm T)\alpha^{\pm 1}}{\sum_{\pm}\pm\cos (b\pm T)\beta^{\pm 1}}= \frac{i\cos (a)\sin(\Arg(\alpha))-\sin(a)\cos(\Arg(\alpha))\tan(T)}
{i\cos (b)\sin(\Arg(\beta))-\sin(b)\cos(\Arg(\beta))\tan(T)}.
\end{equation*}

Thus, we need to investigate for which $A, B, C, D \in \mathbb{C}$ 
$$
\lim_{T \to \infty } \frac{A+B\tan(T)}{C+D\tan(T)}
$$
exists.
If we consider the special sequence $T=\pi(j+1/2)$ with $j \in \mathbb{Z}$, we see that the above limit must be $B/D$. At the same time with 
$T= \pi j$ with $ j \in \mathbb{Z}$, this limit equals $A/C$ if $C\neq 0$. Therefore we must have $AD=BC$.
In this case, the above quotient (and the limit) exists and equals $B/D$. Note that if $C=0$ and the limit exists, then $A=0$ { and we again obtain $B/D$}. In our situation, this gives the condition
\begin{align*}
\tan(\Arg(\alpha))\tan(b) = \tan(a)\tan(\Arg(\beta)).
\end{align*}
Substituting into this expression the definitions of $\alpha,\beta,a$ and $b$   gives the condition stated in Theorem \ref{qdim}.
Substituting the appropriate values for $B/D$ and simplifying gives the claimed limit. 

For ${\rm qdim}[F^\varepsilon_\lambda] $,  we first recall \eqref{Fl}
and hence 
$$\eta(\tau) \ch[F_{\lambda}^\varepsilon] (\tau) = e^{2\pi \varepsilon \left(\lambda-\frac{\alpha_0}{2}\right)}q^{\frac{1}{2} \left(\lambda-\frac{\alpha_0}{2}\right)^2}.$$ 
Therefore, $\eta(it ) \ch[F_{\lambda}^\varepsilon] (it ) \sim  e^{2\pi \varepsilon (\lambda-\frac{\alpha_0}{2})} + O(t)$, for all $\varepsilon$. The results claimed in Theorem \ref{qdim} pertaining to $\text{qdim}[F_\lambda^\varepsilon]$ follow again from Corollary \ref{6.3}.
\end{proof}
 
 \section{Future work}

This work has several possible extensions. Here we briefly propose its ``higher rank" generalization in connection with representation 
theory. 

As explained in \cite{CM}, the atypical singlet characters ${\rm ch}[M_{r,s}](\tau)$ are in fact parametrized by the elements of the dual lattice of 
 $L=\sqrt{2p} \mathbb{Z}$, which can be viewed as a dilation of the $\frak{sl}_2$ root lattice. 
This construction generalizes to higher rank simple Lie algebras. More precisely, 
for every root lattice $Q$ of ADE type and $p \in \mathbb{N}_{\geq 2}$, there exists a vertex operator algebras 
whose (atypical) irreducible characters are in one-to-one correspondence with the elements of the dual lattice of $\sqrt{2p}Q$ (for details see \cite{M}).
These characters are further studied in \cite{BM2}, where we denoted them by ${\rm ch}[W^0(p,\lambda)_Q]$.
 In parallel with \cite{CM}, it is straightforward to regularize them by inclusion of an additional (Jacobi) variable $\varepsilon \in \mathbb{C}^n$ \cite{CM2}. 
The resulting expression  ${\rm ch}[W^0(p,\lambda)_Q](\varepsilon;\tau)$, modulo a power of the Dedekind $\eta$-function, can be expressed in terms of certain higher rank (Jacobi) partial theta functions and their derivatives.

We propose to study asymptotic properties (as $t \rightarrow 0^+$) of ${\rm ch}[W^0(p,\lambda)_Q](\varepsilon; \tau)$. This requires a suitable extension of 
Theorem \ref{Fasex} to higher ranks, which we (jointly with T. Creutzig) intend to address and solve in our future work   (see also \cite{CM2}).

\end{document}